\newcommand\cyr{%
\renewcommand\rmdefault{wncyr}%
\renewcommand\sfdefault{wncyss}%
\renewcommand\encodingdefault{OT2}%
\normalfont \selectfont} \DeclareTextFontCommand{\textcyr}{\cyr}
\newcommand{\be}{\begin{equation}}
\newcommand{\ee}{\end{equation}}
\newcommand{\inn}[2]{{\langle #1,#2 \rangle}}
\newcommand{\E}{\mathbb{E}}
\newcommand{\bH}{\mathbb{H}}
\newcommand{\N}{\mathbb{N}}
\newcommand{\R}{\mathbb{R}}
\newcommand{\X}{\mathbb{X}}
\newcommand{\Z}{\mathbb{Z}}
\newcommand{\A}{\mathbb{A}}
\newcommand{\cS}{\mathcal{S}}
\newcommand{\bS}{\boldsymbol{S}}
\newcommand{\cW}{\mathcal{W}}
\newcommand{\sinc}{\mathop{\mathrm{sinc}}}
\newcommand{\sF}{\mathscr{F}}
\newcommand{\st}{\,\vert\,}
\newcommand{\wW}{\widetilde{\mathcal{W}}}
\newcommand{\bmu}{\boldsymbol{\mu}} 
\newcommand{\blambda}{\boldsymbol{\lambda}} 
\newcommand{\bs}{{\boldsymbol{s}}} 
\newcommand{\cF}{\mathcal{F}}
\newcommand{\fB}{\mathfrak{B}}
\newcommand{\ff}{\mathfrak{f}}
\newcommand{\fb}{\mathfrak{b}}
\newcommand{\nl}{\vskip 10pt\noindent}
\newcommand{\ml}{\noindent\vskip 5pt}
\newcommand{\Aff}{{\textrm{Aff}}}
\newcommand{\card}{\mathrm{card }}
\renewcommand{\rmdefault}{cmr} 
\renewcommand{\sfdefault}{cmr} 
\newtheorem{theorem}{Theorem}
\theoremstyle{plain}
\newtheorem{corollary}{Corollary}
\newtheorem{definition}{Definition}
\newtheorem{example}{Example}
\newtheorem{lemma}{Lemma}
\newtheorem{proposition}{Proposition}
\newtheorem{remark}{Remark}
\numberwithin{equation}{section}
\begin{document}
\title[Fractal Hypersurfaces, Wavelet Sets and Affine Weyl Groups]{Fractal Hypersurfaces, Wavelet Sets\\ and Affine Weyl Groups}
\author{Peter R. Massopust}
\address{Centre of Mathematics, Research Unit M6, Technische Universit\"at M\"unchen, Boltzmannstrasse 3, 85747
Garching b. M\"unchen, Germany, and Helmholtz Zentrum M\"unchen,
Ingolst\"adter Landstrasse 1, 85764 Neuherberg, Germany}
\email{peter.massopust@helmholtz-muenchen.de, massopust@ma.tum.de}

\begin{abstract}
In these lecture notes we present connections between the theory of iterated function systems, in particular those attractors that are graphs of multivariate real-valued fractal functions, foldable figures and affine Weyl groups, and wavelet sets. 
\vskip 12pt\noindent
\textbf{Keywords and Phrases:} Iterated function system (IFS), attractor, fractal interpolation, Read-Bajraktarevi\'{c} operator, fractal surface, root system, Coxeter group, affine Weyl group, wavelet set
\vskip 6pt\noindent
\textbf{AMS Subject Classification (2010):} 17B22, 20F55, 28A80, 37L30, 42C40, 51F15
\end{abstract}

\maketitle
\section{Introduction}
These lecture notes deal with the connections between iterated function systems, in particular, multivariate real-valued fractal functions, root systems and affine Weyl groups, and wavelet sets. After a first superficial glance, these areas seem to be too different to contain commonalities. However, the common multiscale structure that appears both in the construction of fractal sets and wavelets points the way to a deeper connection. For instance, it was first shown in \cite{GHM} and \cite{dghm} that a class of wavelets may be constructed by piecing fractal functions together, and then later it was proved in \cite{doug} that every compactly supported refinable function, i.e., every compactly supported scaling function or wavelet, is a piecewise fractal function. The investigation into the multiscale structure of fractals and wavelets was carried out in \cite{m04} and led to the insight that the classical wavelet set concept, which is built on dilation and translation groups, may be adapted to dilation and reflection groups.

A first construction of this new type of wavelet set appeared in \cite{lm} and then some additional insights were reported in \cite{lmo}. The latter two investigations were based on earlier results in \cite{ghm1,ghm2} that had connected the known concepts of multiresolution analysis and affine fractal surface construction to foldable figures, which are in one-to-one correspondence with the fundamental domains of affine Weyl groups \cite{hw}.

Here, we will revisit some of the theoretical background and present some of the main ideas that underlie the construction of dilation-reflection wavelet sets. In order to keep the presentation as self-contained as possible, we first present an updated view of iterated function systems and give a construction of (affine) fractal hypersurfaces that is based on later requirements. These themes make up the contents of Sections \ref{sec2}, \ref{sec3}, and \ref{sec4}. Then we describe root systems, affine reflections, the associated affine Weyl groups, and the concept of foldable figure in Section \ref{sec5}. There, we also show that based on the results in this section, one can construct orthonormal bases of $L^2(\R^n)$ consisting of affinely generated fractal function bases. In Section \ref{sec6}, we introduce the classical wavelet sets. This is done first in the one-dimensional setting and then generalized to $\R^n$. Finally, we define wavelet sets based on dilation groups and affine Weyl groups coming from a foldable figure, and prove their existence for all expansive dilation matrices and all affine Weyl groups.
\section{Iterated Function Systems}\label{sec2}

In this section, we introduce the concept of iterated function system (IFS) and highlight some of its fundamental properties. For more details and proofs, we refer the reader to \cite{barn93,barnd,bv8,hutch} and the references stated therein. 

Throughout this paper, we use the following notation. The set of positive integers is denoted by $\mathbb{N} := \{1, 2, 3, \ldots\}$ and the ring of integers by $\Z$. We denote the closure of a set $S$ by $\overline{S}$ and its interior by $\overset{\circ}{S}$. $(\mathbb{X},d_\X)$ always denotes a complete metric space with metric $d_{\mathbb{X}}$.

\begin{definition}
Let $N\in\mathbb{N}$. If $f_{n}:\mathbb{X}\rightarrow\mathbb{X}$,
$n=1,2,\dots,N,$ are continuous mappings, then $\mathcal{F} :=\left(
\mathbb{X};f_{1},f_{2},...,f_{N}\right)  $ is called an \textbf{iterated
function system} (IFS).
\end{definition}

By slight abuse of terminology we use the same symbol $\mathcal{F}$ for the
IFS, the set of functions in the IFS, and for the following set-valued mapping. We
define $\mathcal{F}:2^{\mathbb{X}}\rightarrow 2^{\mathbb{X}}$ by
\[
\mathcal{F}(B) := \bigcup_{f\in\mathcal{F}}f(B)
\]
for all $B\in2^{\mathbb{X}},$ the set of subsets of $\mathbb{X}$. 

Let $\mathbb{H=H(X)}$ be the set of all nonempty compact subsets of $\mathbb{X}$. As $(\X,d_\X)$ is complete, $(\bH,d_\bH)$ becomes a complete metric space when endowed with the Hausdorff metric $d_{\bH}$ (cf. \cite{Engel})
\[
d_\bH (A,B) := \max\{\max_{a\in A}\min_{b\in B} d_\X (a,b),\max_{b\in B}\min_{a\in A} d_\X (a,b)\}.
\]
Since $\mathcal{F}\left(  \mathbb{H}\right)  \subset\mathbb{H}$, we can also treat $\mathcal{F}$ as a mapping $\mathcal{F}:\mathbb{H} \rightarrow \mathbb{H}$. When
$U\subset\mathbb{X}$ is nonempty, we may write $\mathbb{H}(U)=\mathbb{H(X)}%
\cap2^{U}$. We denote by $\left\vert \mathcal{F}\right\vert $ the number of
distinct mappings in $\mathcal{F}$.

A metric space $\mathbb{X}$ is termed \textbf{locally compact} if for every compact $C\subset\mathbb{X}$ and every positive $r\in\R$ the set $\overline{C+r}$ is again compact. The notation $\overline{C+r}$ means the closure
of the union of balls of radius $r$, one centered on each point of $C$.

The following information is foundational. A proof of it may be found in \cite{bm}.

\begin{theorem}
\label{ctythm}
\begin{itemize}
\item[(i)] If $(\mathbb{X},d_{\mathbb{X}})$ is compact then $(\mathbb{H}%
,d_{\mathbb{H}})$ is compact.

\item[(ii)] If $(\mathbb{X},d_{\mathbb{X}})$ is locally compact then $(\mathbb{H}%
,d_{\mathbb{H}})$ is locally compact.

\item[(iii)] If $\mathbb{X}$ is locally compact, or if each $f\in\mathcal{F}$ is
uniformly continuous, then $\mathcal{F}:\mathbb{H\rightarrow H}$ is continuous.

\item[(iv)] If $f:\mathbb{X\rightarrow}\mathbb{X}$ is a contraction mapping for each
$f\in\mathcal{F}$, then $\mathcal{F}:\mathbb{H\rightarrow H}$ is a contraction mapping.
\end{itemize}
\end{theorem}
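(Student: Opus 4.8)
The plan is to establish the four items essentially independently, each by a short metric-space argument, since they concern different aspects of the pair $(\bH,d_\bH)$ and the induced map $\cF$. For (i), I would use the standard characterization of compactness in $(\bH,d_\bH)$: when $\X$ is compact it is in particular totally bounded and complete, so it suffices to show $\bH$ is totally bounded (completeness of $\bH$ is already given). Given $\eps>0$, take a finite $\eps$-net $\{x_1,\dots,x_k\}$ for $\X$; then the finitely many sets of the form $\bigcup_{i\in I}\{x_i\}$ for $\emptyset\neq I\subseteq\{1,\dots,k\}$ form an $\eps$-net for $\bH$ in the Hausdorff metric, because any nonempty compact $A\subseteq\X$ is within Hausdorff distance $\eps$ of the set of net points lying within $\eps$ of $A$. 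For (ii), local compactness in the sense defined in the excerpt ($\overline{C+r}$ compact for every compact $C$ and $r>0$): given a compact $\mathcal{C}\subseteq\bH$ and $r>0$, I would let $K:=\overline{\bigcup_{A\in\mathcal{C}}A}\subseteq\X$ and observe that $K$ is compact (it is the union of a compact family of compacts — this needs a brief argument, e.g.\ via the fact that $\bigcup\mathcal{C}$ is the image of $\mathcal{C}$ under a suitable evaluation, or directly by a net/sequence extraction), and then show that $\overline{\mathcal{C}+r}\subseteq\bH\big(\overline{K+r}\big)$, which is compact by (i) applied to the compact space $\overline{K+r}$; a closed subset of a compact space is compact, so $\overline{\mathcal{C}+r}$ is compact.

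For (iii) and (iv) I would work directly with the estimate that is the real engine of the whole theorem: for any $B,B'\in\bH$ and any map $f$ that is uniformly continuous on a neighborhood of $B\cup B'$, one has $d_\bH(f(B),f(B'))\le \omega_f(d_\bH(B,B'))$, where $\omega_f$ is a modulus of continuity; and for finitely many maps, $d_\bH(\cF(B),\cF(B'))=d_\bH\big(\bigcup_n f_n(B),\bigcup_n f_n(B')\big)\le\max_n d_\bH(f_n(B),f_n(B'))$. The last inequality is the key lemma and I would isolate it: $d_\bH(\bigcup A_n,\bigcup A'_n)\le\max_n d_\bH(A_n,A'_n)$, proved by unwinding the definition of $d_\bH$ as a max of two sup-inf expressions and noting that a best approximant to a point of $A_j$ from $\bigcup A'_n$ is at least as good as the best approximant from $A'_j$. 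Item (iv) is then immediate: if each $f_n$ is a contraction with ratio $s_n<1$, then $d_\bH(f_n(B),f_n(B'))\le s_n\,d_\bH(B,B')$, so $d_\bH(\cF(B),\cF(B'))\le(\max_n s_n)\,d_\bH(B,B')$ with $\max_n s_n<1$. For (iii), if each $f\in\cF$ is uniformly continuous on all of $\X$ the modulus estimate gives continuity of $\cF$ directly; if instead $\X$ is merely locally compact, I would fix $B_0\in\bH$ and $\eps>0$, restrict attention to the compact set $\overline{B_0+1}$, note each $f$ is uniformly continuous there, and conclude continuity of $\cF$ at $B_0$ from the modulus estimate applied with $B,B'$ ranging over compact subsets of $\overline{B_0+1}$ (valid once $d_\bH(B,B_0)<1$).

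The main obstacle is the one genuinely non-formal point in (ii): showing that the union $\bigcup_{A\in\mathcal{C}}A$ of a Hausdorff-compact family $\mathcal{C}$ of compact sets has compact closure, and more precisely that it is contained in some $\overline{K+r}$-type set to which (i) applies. The clean way is: $\mathcal{C}$ compact in $\bH$ implies $\mathcal{C}$ is bounded, so there is a single $A_0\in\mathcal{C}$ and $R>0$ with every $A\in\mathcal{C}$ contained in $\overline{A_0+R}$; then $\bigcup\mathcal{C}\subseteq\overline{A_0+R}$, which is compact by local compactness of $\X$, and $\overline{\mathcal{C}+r}\subseteq\bH\big(\overline{A_0+R+r}\big)$, a compact metric space by (i). Everything else is a routine but careful manipulation of the Hausdorff metric, and I would present the union-estimate lemma once and reuse it in (iii) and (iv).
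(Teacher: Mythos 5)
Your argument is correct. Note that the paper does not actually prove Theorem \ref{ctythm} --- it states the result as ``foundational'' and defers entirely to \cite{bm} --- so there is no in-text proof to compare against; but your route is the standard one and each step goes through: total boundedness of $\bH$ via finite $\eps$-nets consisting of subsets of an $\eps$-net of $\X$ for (i); reduction of (ii) to (i) by using boundedness of the compact family $\mathcal{C}$ to trap $\overline{\mathcal{C}+r}$ inside $\bH(K)$ for a single compact $K\subset\X$; and the union estimate $d_\bH\bigl(\bigcup_n A_n,\bigcup_n A'_n\bigr)\le\max_n d_\bH(A_n,A'_n)$ combined with a modulus of continuity as the common engine for (iii) and (iv). Two small points to tighten in writing it up: in (ii), boundedness of $\mathcal{C}$ gives $A\subseteq\{x\in\X \st d_\X(x,A_0)\le R\}$, which in a general metric space need not coincide with $\overline{A_0+R}$ as the paper defines it (closure of a union of balls), so enlarge $R$ slightly before invoking local compactness of $\X$; and in (iii) you should record that $f(B)$ is again compact (continuity of each $f$) so that $\cF$ really maps $\bH$ to $\bH$ before discussing its continuity.
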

\noindent
For $B\subset\mathbb{X}$, let $\mathcal{F}^{k}(B)$ denote the $k$-fold
composition of $\mathcal{F}$, i.e., the union of $f_{i_{1}}\circ f_{i_{2}%
}\circ\cdots\circ f_{i_{k}}(B)$ over all finite words $i_{1}i_{2}\cdots i_{k}$
of length $k.$ Define $\mathcal{F}^{0}(B) := B.$

\begin{definition}
\label{attractdef}A nonempty compact set $A\subset\mathbb{X}$ is said to be an
\textbf{attractor} of the IFS $\mathcal{F}$ if
\begin{itemize}
\item[(i)] $\mathcal{F}(A)=A$ and

\item[(ii)] there exists an open set $U\subset\mathbb{X}$ such that $A\subset U$ and
$\lim_{k\rightarrow\infty}\mathcal{F}^{k}(B)=A,$ for all $B\in\mathbb{H(}U)$,
where the limit is with respect to the Hausdorff metric.
\end{itemize}
The largest open set $U$ such that $\mathrm{(ii)}$ is true is called the \textbf{basin of
attraction} (for the attractor $A$ of the IFS $\mathcal{F}$).
\end{definition}

Note that if $U_1$ and $U_2$ satisfy condition $\mathrm{(ii)}$ in Definition 2 for the same attractor $A$ then so does 
$U_1 \cup U_2$. We also remark that the invariance condition $\mathrm{(i)}$ is not needed; it follows from $\mathrm{(ii)}$ for $B := A$.

We will use the following observation \cite[Proposition 3 (vii)]{lesniak},
\cite[p.68, Proposition 2.4.7]{edgar}.

\begin{lemma}
\label{intersectlemma}Let $\left\{  B_{k}\right\}  _{k=1}^{\infty}$ be a
sequence of nonempty compact sets such that $B_{k+1}\subset B_{k}$, for all
$k\in\N$. Then $\cap_{k\geq1}B_{k}=\lim_{k\rightarrow\infty}B_{k}$ where
convergence is with respect to the Haudorff metric $d_\bH$.
\end{lemma}

The next result shows how one may obtain the attractor $A$ of an IFS. For the proof, we refer the reader to \cite{bm}. Note that we do not assume that the functions in the IFS $\cF$ are contractive.

\begin{theorem}
\label{attractorthm}Let $\mathcal{F}$ be an IFS with attractor $A$ and basin
of attraction $U.$ If the mapping $\mathcal{F}:\mathbb{H(}U)\mathbb{\rightarrow H(}U)$ is
continuous then%
\[
A=\bigcap\limits_{K\geq1}\overline{\bigcup_{k\geq K}\mathcal{F}^{k}(B)},%
\quad\text{for all }B\subset U\text{ such that }\overline{B}\in
\mathbb{H(}U)\text{.}%
\]

\end{theorem}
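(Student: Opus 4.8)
The plan is to prove the two inclusions
\[
A\subseteq\bigcap_{K\geq1}\overline{\bigcup_{k\geq K}\mathcal F^{k}(B)}
\qquad\text{and}\qquad
\bigcap_{K\geq1}\overline{\bigcup_{k\geq K}\mathcal F^{k}(B)}\subseteq A
\]
separately, for an arbitrary $B\subseteq U$ with $\overline B\in\mathbb H(U)$. The main tool is Definition \ref{attractdef}(ii) together with Lemma \ref{intersectlemma}; the continuity hypothesis on $\mathcal F\colon\mathbb H(U)\to\mathbb H(U)$ is what lets me pass to Hausdorff limits term by term. First I would replace $B$ by $\overline B$: since $f$ is continuous, $f(\overline B)\subseteq\overline{f(B)}$, and a short argument shows $\mathcal F^{k}(\overline B)$ and $\mathcal F^{k}(B)$ have the same closure, so without loss of generality $B\in\mathbb H(U)$ to begin with. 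Then $\mathcal F^{k}(B)\in\mathbb H(U)$ for every $k$, so each $\bigcup_{k\geq K}\mathcal F^{k}(B)$ is a subset of $U$ whose closure I want to understand; I should note in passing that these closures lie in $\mathbb H(X)$ (being closed and, by local compactness or by a boundedness argument, compact) — this is the one spot where I would want to be a little careful, and I return to it below.

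For the inclusion ``$\subseteq$'': by Definition \ref{attractdef}(ii), $\mathcal F^{k}(B)\to A$ in $d_{\mathbb H}$. Hence for every $K$, the tail set $\{\mathcal F^{k}(B):k\geq K\}$ accumulates only at $A$, and a standard fact about the Hausdorff metric gives $A\subseteq\overline{\bigcup_{k\geq K}\mathcal F^{k}(B)}$ (every point of $A$ is a limit of points $x_{k}\in\mathcal F^{k}(B)$). Since this holds for all $K$, $A$ is contained in the intersection.

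For the reverse inclusion ``$\supseteq$'': set $C_{K}:=\overline{\bigcup_{k\geq K}\mathcal F^{k}(B)}$. These are nonempty, and $C_{K+1}\subseteq C_{K}$, so by Lemma \ref{intersectlemma} the intersection $\bigcap_{K}C_{K}$ equals $\lim_{K\to\infty}C_{K}$ in $d_{\mathbb H}$ — provided the $C_{K}$ are compact. So it suffices to show $C_{K}\to A$. Here I would use that $\mathcal F^{k}(B)\to A$: given $\eps>0$, there is $K_{0}$ with $d_{\mathbb H}(\mathcal F^{k}(B),A)<\eps$ for all $k\geq K_{0}$, which forces $\bigcup_{k\geq K}\mathcal F^{k}(B)\subseteq A+\eps$ (the $\eps$-neighbourhood) and $A\subseteq\overline{\bigcup_{k\geq K}\mathcal F^{k}(B)}$ for $K\geq K_{0}$, hence $d_{\mathbb H}(C_{K},A)\leq\eps$ for $K\geq K_{0}$. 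Therefore $\lim_{K}C_{K}=A$, and combining with Lemma \ref{intersectlemma} yields $\bigcap_{K}C_{K}=A$.

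The step I expect to be the main obstacle is the compactness of the sets $C_{K}=\overline{\bigcup_{k\geq K}\mathcal F^{k}(B)}$, which is needed to invoke Lemma \ref{intersectlemma} and, implicitly, to even speak of $d_{\mathbb H}(C_{K},A)$. Closedness is automatic, but compactness in a merely complete $\mathbb X$ is not; the honest route is to observe that once $K\geq K_{0}$ we have $\bigcup_{k\geq K}\mathcal F^{k}(B)\subseteq A+\eps$ for every $\eps$, so in fact $C_{K}\subseteq\overline{A+\eps}$ for all $\eps>0$ and one gets total boundedness of $C_{K}$ from total boundedness of $A$ (plus completeness of $\mathbb X$, so $C_{K}$ is compact). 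For small $K$ one can argue that $\bigcup_{k\geq K}\mathcal F^{k}(B)=\bigl(\bigcup_{K\leq k<K_{0}}\mathcal F^{k}(B)\bigr)\cup\bigcup_{k\geq K_{0}}\mathcal F^{k}(B)$, a union of finitely many compact sets with one relatively compact set, hence its closure is compact. I would also double-check that each $\mathcal F^{k}(B)$ is itself compact — this follows by induction from $\mathcal F(\mathbb H)\subseteq\mathbb H$, which is exactly the remark preceding Theorem \ref{ctythm}. With these points addressed the two inclusions close the proof.
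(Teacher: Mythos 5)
The paper does not actually prove this theorem --- it defers the proof to \cite{bm} --- so there is no in-text argument to compare against; judged on its own, your proof is correct. The reduction to $\overline{B}$ works because compactness upgrades $f(\overline{B})\subseteq\overline{f(B)}$ to equality, so $\mathcal{F}^{k}(\overline{B})=\overline{\mathcal{F}^{k}(B)}$ by induction; the forward inclusion is the standard Hausdorff-limit fact; and your total-boundedness argument for $C_{K}$ (split the union at $K_{0}(\varepsilon)$ into a finite union of compacta plus a tail inside the $\varepsilon$-neighbourhood of the compact set $A$) is sound, although the sentence ``once $K\geq K_{0}$ we have $\dots\subseteq A+\varepsilon$ for every $\varepsilon$'' has its quantifiers tangled, since $K_{0}$ depends on $\varepsilon$; the corrected version you give immediately afterwards is the right one. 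Two remarks. First, the detour through Lemma \ref{intersectlemma} (and hence the compactness of the $C_{K}$) is avoidable: if $x\in\bigcap_{K}C_{K}$, then for every $\varepsilon>0$ we have $x\in C_{K_{0}(\varepsilon)}\subseteq\overline{A+\varepsilon}$, so $d(x,A)\le\varepsilon$ for all $\varepsilon$ and $x\in A$ because $A$ is closed; this gives the reverse inclusion directly from Definition \ref{attractdef}(ii). Second, your proof never actually invokes the stated hypothesis that $\mathcal{F}\colon\mathbb{H}(U)\to\mathbb{H}(U)$ is continuous --- only the continuity of the individual maps $f\in\mathcal{F}$, which is part of the definition of an IFS, enters. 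That is not an error (you have proved the statement under weaker assumptions), but it is worth saying explicitly so the reader does not hunt for where the hypothesis is used.
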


The quantity on the right-hand side here is sometimes called the
\textbf{topological upper limit }of the sequence $\left\{ \cF^{k}(B)\st k\in \N\right\}$. (See, for instance, \cite{Engel}.) 

A subclass of IFSs is obtained by imposing additional conditions on the functions that comprise the IFS. The definition below introduces this subclass.

\begin{definition}
An IFS $\cF = (\X; f_1, f_2, \ldots, f_N)$ is called \textbf{contractive} if each $f\in \cF$
is a contraction (with respect to the metric $d$), i.e., there is a constant $c \in [0, 1)$
such that 
$$
d(f(x_1), f(x_2)) \leq c\,d(x_1, x_2),
$$
for all $x_1, x_2 \in \X$. 
\end{definition}
By item $\mathrm{(iv)}$ in Theorem 1, the mapping 
$\cF : \mathbb{H} \to \mathbb{H}$ is then also contractive on the complete metric space $(\mathbb{H}, d_{\mathbb{H}})$ and thus -- by the Contraction Mapping Theorem -- possesses a unique attractor $A$. This attractor satisfies the \textbf{self-referential equation}
\be\label{self}
A = \cF(A) = \bigcup_{f\in\mathcal{F}}f(A).
\ee
In the case of a contractive IFS, the basin of attraction for $A$ is $\X$ and the attractor can be computed via the following procedure: Suppose $K_0$ is any set in $\in \bH(\X)$. Consider the sequence of iterates
\[
K_m := \cF(K_{m-1}) = \cF^m (K_0), \quad m\in \N.
\]  
Then $K_m$ converges in the Hausdorff metric to the attractor $A$ as $m\to\infty$, i.e., $d_\bH(K_m, A) \to 0$ as $m\to\infty$.

For the remainder of this paper, we deal exclusively with contractive IFSs as defined above. We will see that the self-referential equation \eqref{self} plays a fundamental role in the construction of fractals sets, i.e., the attractors of IFSs, and in the determination of their geometric and analytic properties.
\section{Fractal Hypersurfaces in $\R^{n+1}$}\label{sec3}
Here, we construct a class of special attractors of IFSs, namely attractors that are the graphs of bounded functions $\ff:\Omega\subset\R^n \to \R$, where $\Omega\in \bH(\R^n)$, $n\in \N$. To this end, we assume that $1 < N\in \N$ and set $\N_N := \{1, \ldots, N\}$. Suppose that $\{u_i : \Omega\to \Omega \st i \in \N_N\}$ is a finite family of bounded bijective mappings with the property that
\begin{enumerate}
\item[(P)] $\{u_i(\Omega)\st i \in\N_N\}$ forms a set-theoretic partition of $\Omega$, i.e., $\Omega = \bigcup_{i=1}^N u_i(\Omega)$ and $u_i(\Omega)\cap u_j(\Omega) = \emptyset$, for all $i\neq j\in \N_N$.
\end{enumerate}
\noindent

We introduce the set $B(\Omega) := B(\Omega, \R) := \{f : \Omega\to \R \st \text{$f$ is bounded}\}$ and endowed it with the metric 
\[
d(f,g): = \displaystyle{\sup_{x\in \Omega}} \,|f(x) - g(x)|.
\] 
It is straight-forward to show that $(B(\Omega), d)$ is a complete metric space. Indeed it is even a complete \textbf{metric linear space}. (A {metric linear space} \cite{Rol} is a vector space endowed with a metric under which the operations of vector addition and scalar multiplication are continuous.)

For $i \in \N_N$, let $v_i: \Omega_i\times \R \to \R$ be a mapping that is uniformly contractive in the second variable, i.e., there exists an $\ell\in [0,1)$ so that for all $y_1, y_2\in \R$
\be\label{scon}
| v_i(x, y_1) - v_i(x, y_2)| \leq \ell\, | y_1 - y_2|, \quad\forall x\in \Omega.
\ee
Define a \textbf{Read-Bajactarevi\'c (RB) operator} $\Phi: B(\Omega)\to \R^{\Omega}$ by
\be\label{RB}
\Phi f (x) := \sum_{i=1}^N v_i (u_i^{-1} (x), f\circ u_i^{-1} (x))\,\chi_{u_i(\Omega)}(x), 
\ee
where  
$$
\chi_M (x) := \begin{cases} 1, & x\in M\\ 0, & x\notin M\end{cases},
$$
denotes the characteristic function of a set $M$. Note that $\Phi$ is well-defined and since $f$ is bounded and each $v_i$ contractive in the second variable, $\Phi f$ is again an element of $B(\Omega)$.

Moreover, by \eqref{scon}, we obtain for all $f,g\in B(\Omega)$ the following inequality:
\begin{align}\label{estim}
d(\Phi f, \Phi g) & = \sup_{x\in \Omega} |\Phi f (x) - \Phi g (x)|\nonumber\\
& = \sup_{x\in \Omega} |v(u_i^{-1} (x), f_i(u_i^{-1} (x))) - v(u_i^{-1} (x), g_i(u_i^{-1} (x)))|\nonumber\\
& \leq \ell\sup_{x\in \Omega} |f_i\circ u_i^{-1} (x) - g_i \circ u_i^{-1} (x)| \leq \ell\, d(f,g).
\end{align}
To simplify notation, we set $v(x,y):= \sum_{i=1}^N v_i (x, y)\,\chi_{u_i(\Omega)}(x)$ in the above equation. In other words, $\Phi$ is a contraction on the complete metric linear space $B(\Omega)$ and, by the Contraction Mapping Theorem, has therefore a unique fixed point $\ff$ in $B(\Omega)$. This unique fixed point will be called a \textbf{multivariate real-valued fractal function}  (for short, fractal function) and its graph a \textbf{fractal hypersurface of $\R^{n+1}$}.

Next we would like to consider a special choice for the mappings $v_i$. Define $v_i:\Omega\times\R\to \R$ by
\be\label{specialv}
v_i (x,y) := \lambda_i (x) + S_i (x) \,y,\quad i \in \N_N,
\ee
where $\lambda_i \in B(\Omega)$ and $S_i : \Omega\to \R$ is a function. Then $v_i$ given by \eqref{specialv} satisfies condition \eqref{scon} provided that the functions $S_i$ are bounded on $\Omega$ with bounds in $[0,1)$ for then
\begin{align*}
|v_i (x,y_1) - v_i (x,y_2))| & = |S_i (x) \,y_1 - S_i (x) \,y_2| = |S_i(x)|\cdot |y_1 -  y_2|\\
& \leq \|S_i\|_{\infty,\Omega}\, |y_1 - y_2| \leq s\,|y_1 -  y_2|.
\end{align*}
Here, we denoted by $\|\bullet\|_{\infty, \Omega}$ the supremum norm on $\Omega$ and defined 
$$
s := \max\{\|S_i\|_{\infty,\Omega}\st i\in \N_N\}.
$$

Thus, for a fixed set of functions $\{\lambda_1, \ldots, \lambda_N\}$ and $\{S_1, \ldots, S_N\}$, the associated RB operator \eqref{RB} has now the form
\be\label{RB1}
\Phi f = \sum_{i=1}^N \lambda_i\circ u_i^{-1} \,\chi_{u_i(\Omega)} + \sum_{i=1}^N (S_i\circ u_i^{-1})\cdot (f\circ u_i^{-1})\,\chi_{u_i(\Omega)},
\ee
or, equivalently,
\[
\Phi f\circ u_i = \lambda_i + S_i\cdot f,
\]
on $\Omega$ and $\forall\;i\in\N_N$. Thus, we have arrived at the following result.
\begin{theorem}
Let $\Omega$ be an element of $\bH(\R^n)$ and suppose that $\{u_i : \Omega\to \Omega \st i \in \N_N\}$ is a family of bounded bijective mappings satisfying property $\mathrm{(P)}$. Further suppose that the vectors of functions $\blambda := (\lambda_1, \ldots, \lambda_N)$ and $\bS := (S_1, \ldots, S_N)$ are elements of $\underset{i=1}{\overset{N}{\times}} B(\Omega)$. 

Define a mapping $\Phi: \left(\underset{i=1}{\overset{N}{\times}} B(\Omega)\right)\times \left(\underset{i=1}{\overset{N}{\times}} B (\Omega)\right) \times B(\Omega)\to B(\Omega)$ by
\be\label{eq3.4}
\Phi(\blambda)(\bS) f = \sum_{i=1}^N \lambda_i\circ u_i^{-1} \,\chi_{u_i(\Omega} + \sum_{i=1}^N (S_i\circ u_i^{-1})\cdot (f\circ u_i^{-1})\,\chi_{u_i(\Omega)}.
\ee
If $s = \max\{\|S_i\|_{\infty,\Omega}\st i\in \N_N\} < 1$ then the operator $\Phi(\blambda)(\bS)$ is contractive on the complete metric linear space $B(\Omega)$ and its unique fixed point $\ff$ satisfies the self-referential equation
\be\label{3.4}
\ff = \sum_{i=1}^N \lambda_i\circ u_i^{-1} \,\chi_{u_i(\Omega)} + \sum_{i=1}^N (S_i\circ u_i^{-1})\cdot (\ff\circ u_i^{-1})\,\chi_{u_i(\Omega)},
\ee
or, equivalently,
\be
\ff\circ u_i = \lambda_i + S_i\cdot \ff,
\ee
on $\Omega$ and $\forall\;i\in\N_N$.
\end{theorem}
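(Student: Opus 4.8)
The plan is to recognize this theorem as a direct application of the machinery already assembled in Sections 2 and 3, with essentially all the work having been done in the passage leading up to it. First I would observe that once $\blambda$ and $\bS$ are fixed as prescribed elements of $\times_{i=1}^N B(\Omega)$, the map $f \mapsto \Phi(\blambda)(\bS)f$ given by \eqref{eq3.4} is precisely the RB operator \eqref{RB1} associated with the choice $v_i(x,y) = \lambda_i(x) + S_i(x)\,y$ from \eqref{specialv}. So the content reduces to verifying the hypotheses of the Contraction Mapping Theorem on $(B(\Omega),d)$.

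Next I would carry out the three verifications in order. (1) \emph{Well-definedness and self-map property}: since each $\lambda_i$ and each $S_i$ is bounded on $\Omega$, and $f\in B(\Omega)$ is bounded, each summand $\lambda_i\circ u_i^{-1} + (S_i\circ u_i^{-1})(f\circ u_i^{-1})$ is bounded on $u_i(\Omega)$; because property (P) makes $\{u_i(\Omega)\}_{i\in\N_N}$ a partition of $\Omega$, the characteristic functions $\chi_{u_i(\Omega)}$ are disjointly supported, so $\Phi(\blambda)(\bS)f$ is a well-defined bounded function on all of $\Omega$, i.e.\ an element of $B(\Omega)$. (2) \emph{Contractivity}: for $f,g\in B(\Omega)$ the difference $\Phi(\blambda)(\bS)f - \Phi(\blambda)(\bS)g$ equals $\sum_i (S_i\circ u_i^{-1})(f\circ u_i^{-1} - g\circ u_i^{-1})\chi_{u_i(\Omega)}$ — the $\lambda$-terms cancel — and the disjointness of supports plus the bound $\|S_i\|_{\infty,\Omega}\le s$ gives, just as in \eqref{estim}, $d(\Phi(\blambda)(\bS)f,\Phi(\blambda)(\bS)g) \le s\,d(f,g)$; since $s<1$ by hypothesis this is a contraction. (3) \emph{Conclusion}: $(B(\Omega),d)$ is complete (as noted in Section 3), so the Contraction Mapping Theorem yields a unique fixed point $\ff\in B(\Omega)$, which is the self-referential equation \eqref{3.4}.

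Finally I would note the equivalence of \eqref{3.4} with $\ff\circ u_i = \lambda_i + S_i\cdot\ff$ on $\Omega$ for each $i\in\N_N$: composing \eqref{3.4} on the right with $u_i$ and using that $u_i$ is a bijection of $\Omega$ with $\chi_{u_j(\Omega)}\circ u_i = \delta_{ij}$ (again by (P)), all terms with $j\ne i$ drop out and $u_j^{-1}\circ u_i = \id$ when $j=i$, leaving $\ff\circ u_i = \lambda_i + S_i\cdot\ff$; conversely, restricting this family of identities to the pieces $u_i(\Omega)$ and reassembling via the partition recovers \eqref{3.4}.

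There is no real obstacle here; the only point requiring a little care is the bookkeeping around property (P) — specifically that the $u_i$ being bijections of $\Omega$ \emph{and} $\{u_i(\Omega)\}$ partitioning $\Omega$ together force $u_i^{-1}$ to be defined on the relevant piece $u_i(\Omega)$ and force the characteristic functions to behave multiplicatively under composition with the $u_i$. Everything else is a transcription of the estimate \eqref{estim} and an invocation of the Banach fixed point theorem on the complete metric linear space $(B(\Omega),d)$.
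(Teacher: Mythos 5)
Your proposal is correct and follows essentially the same route as the paper: the theorem is obtained by specializing the general Read--Bajraktarevi\'c operator to $v_i(x,y)=\lambda_i(x)+S_i(x)\,y$, checking that boundedness of the $S_i$ with $s<1$ yields the uniform contractivity condition \eqref{scon}, and then invoking the contraction estimate \eqref{estim} together with the Contraction Mapping Theorem on the complete metric linear space $(B(\Omega),d)$. Your additional bookkeeping on property (P) and the equivalence of \eqref{3.4} with $\ff\circ u_i=\lambda_i+S_i\cdot\ff$ is a correct elaboration of what the paper leaves implicit.
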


\begin{remark}
Note that the fractal function $\ff:\Omega\to\R$ generated by the RB operator defined by \eqref{eq3.4} does depend on the two $N$-tuples of bounded functions $\blambda, \bS\in \underset{i=1}{\overset{N}{\times}} B (\Omega)$. The fixed point $\ff$ should therefore be written more precisely as $\ff(\blambda)(\bS)$. However, for the sake of notational simplicity, we usually suppress this dependence for both $\ff$ and $\Phi$.
\end{remark}

Now assume that the vector of functions $\bS$ is {\em fixed}. Then the following result found in \cite{GHM} and in more general form in \cite{M97} describes the relationship between the vector of functions $\blambda$ and the fixed point $\ff$ generated by it.
\begin{theorem}\label{thm4}
The mapping $\blambda \mapsto \ff(\blambda)$ defines a linear isomorphism from $\underset{i=1}{\overset{N}{\times}} B(\Omega)$ to $B(\Omega)$.
\end{theorem}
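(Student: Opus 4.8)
The plan is to exhibit the map $T:\blambda\mapsto\ff(\blambda)$ explicitly, show it is linear, and then construct its inverse. Linearity is the routine part: fix $\bS$ and write $\Phi_\blambda := \Phi(\blambda)(\bS)$. From \eqref{eq3.4} one sees $\Phi_{\alpha\blambda+\beta\bmu}f = \alpha\sum_i\lambda_i\circ u_i^{-1}\chi_{u_i(\Omega)} + \beta\sum_i\mu_i\circ u_i^{-1}\chi_{u_i(\Omega)} + \sum_i(S_i\circ u_i^{-1})(f\circ u_i^{-1})\chi_{u_i(\Omega)}$, so if $\ff=\ff(\blambda)$ and $\fb=\ff(\bmu)$ are the respective fixed points, then a direct substitution using \eqref{3.4} shows $\alpha\ff+\beta\fb$ is the fixed point of $\Phi_{\alpha\blambda+\beta\bmu}$; by uniqueness of fixed points, $\ff(\alpha\blambda+\beta\bmu)=\alpha\ff(\blambda)+\beta\ff(\bmu)$. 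Hence $T$ is a linear map between the vector spaces $\times_{i=1}^N B(\Omega)$ and $B(\Omega)$.

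Next I would show $T$ is injective. If $\ff(\blambda)=0$ then \eqref{3.4} forces $\sum_i\lambda_i\circ u_i^{-1}\,\chi_{u_i(\Omega)}=0$ on $\Omega$; because $\{u_i(\Omega)\}$ is a partition of $\Omega$ by property (P) and each $u_i$ is a bijection of $\Omega$, restricting to $u_i(\Omega)$ and composing with $u_i$ gives $\lambda_i=0$ for every $i$, so $\blambda=0$.

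For surjectivity, given an arbitrary $g\in B(\Omega)$, I would simply define $\lambda_i := g\circ u_i - S_i\cdot g$ for $i\in\N_N$. Each $\lambda_i$ is bounded since $g$ and $S_i$ are bounded and $u_i:\Omega\to\Omega$, so $\blambda\in\times_{i=1}^N B(\Omega)$. Then the relation $g\circ u_i = \lambda_i + S_i\cdot g$ holds on $\Omega$ for all $i$, which is exactly the equivalent form of the self-referential equation in Theorem~2; by the uniqueness of the fixed point of $\Phi(\blambda)(\bS)$ this means $g=\ff(\blambda)$, so $T$ is onto. Combining the three steps, $T$ is a linear isomorphism. (One may also note the inverse $T^{-1}g = (g\circ u_i - S_i\cdot g)_{i=1}^N$ is manifestly linear, which re-proves bijectivity at once.)

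The only point demanding a little care — the ``main obstacle,'' though it is mild — is the bookkeeping with the partition: one must use property (P) to pass freely between an identity stated on all of $\Omega$ via the characteristic functions $\chi_{u_i(\Omega)}$ and the family of identities $\ff\circ u_i=\lambda_i+S_i\cdot\ff$ obtained by restricting to the $i$-th cell and precomposing with $u_i$. Since the $u_i$ are bijective and the cells are disjoint with union $\Omega$, this translation is an equivalence, and once it is in hand every step above is immediate. No contraction estimates beyond those already established in \eqref{estim} are needed; the argument rests entirely on uniqueness of fixed points from the Contraction Mapping Theorem invoked in Theorem~2.
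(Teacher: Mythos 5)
Your proposal is correct and follows essentially the same route as the paper: linearity and injectivity via uniqueness of the fixed point of the RB operator, and surjectivity by explicitly solving $\lambda_i := g\circ u_i - S_i\cdot g$. The only cosmetic difference is that you verify injectivity by checking the kernel is trivial (after establishing linearity), whereas the paper argues it directly from the fixed-point equation; the substance is the same.
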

\begin{proof}
Let $\alpha, \beta \in\R$ and let $\blambda, \bmu\in \underset{i=1}{\overset{N}{\times}} B(\Omega)$. Injectivity follows immediately from the fixed point equation \eqref{3.4} and the uniqueness of the fixed point: $\blambda = \bmu$ $\Longleftrightarrow$ $\ff(\blambda) = \ff(\bmu)$.

Linearity follows from \eqref{3.4}, the uniqueness of the fixed point and injectivity: 
\begin{align*}
\ff(\alpha\blambda + \beta \bmu) & = \sum_{i=1}^N (\alpha\lambda_i + \beta \mu_i) \circ u_i^{-1} \,\chi_{u_i(\Omega)}\\
& + \sum_{i=1}^N (S_i\circ u_i^{-1})\cdot (\ff(\alpha\blambda + \beta \bmu)\circ u_i^{-1})\,\chi_{u_i(\Omega)}
\end{align*}
and
\begin{align*}
\alpha \ff(\blambda) + \beta \ff(\bmu) & = \sum_{i=1}^N (\alpha\lambda_i + \beta \mu_i) \circ u_i^{-1} \,\chi_{u_i(\Omega)}\\
& + \sum_{i=1}^N (S_i\circ u_i^{-1})\cdot (\alpha \ff(\blambda) + \beta \ff(\bmu))\circ u_i^{-1})\,\chi_{u_i(\Omega)}.
\end{align*}
Hence, $\ff(\alpha\blambda + \beta \bmu) = \alpha \ff(\blambda) + \beta \ff(\bmu)$.

For surjectivity, we define $\lambda_i := \ff\circ u_i - S_i \cdot \ff$, $i\in \N_N$. Since $\ff\in B(\Omega)$, we have $\blambda\in \underset{i=1}{\overset{N}{\times}} B(\Omega)$. Thus, $\ff(\blambda) = \ff$.
\end{proof}
We will see below that this theorem allows us to define bases for fractal functions. For more details, see \cite{hkm,m95,m10}.
\section{Affinely Generated Fractal Surfaces}\label{sec4}
In this section, we specialize our construction of fractal functions even further. It is our goal to obtain continuous fractal hypersurfaces that are generated by {\em affine} mappings $\lambda_i:\Omega \to \R$ on specially chosen domains $\Omega\subset\R^n$, namely $n$-simplices, and by {\em constant} functions $S_i := s_i\in (-1,1)$.

This type of fractal surface was first systematically introduced in \cite{m90} and generalized in \cite{gh}. Further generalizations were presented in \cite{ghm1,ghm2,hm}. All these constructions are based on using for $\Omega$ certain types of simplicial regions and affine mappings. Later, different types of fractal surface constructions not necessarily based on simplicial regions and affine mappings were published. A short and albeit incomplete list of them is \cite{boud1,boud2,boud3,chand,chand2,feng}. 

In order to set up the connection with wavelet sets, we need to follow the construction that originated in \cite{ghm1,ghm2} and was used in \cite{lm,m10}. To this end, we first need to choose as our domain $\Omega\subset\R^n$ an $n$--simplex.
\begin{definition}
Let $\{p_0, p_1, \ldots, p_n\}$ be a set of affinely independent points in $\R^{n}$. A {\em regular $n$-simplex} in $\R^n$ is defined as the point set
\[
\triangle := \left\{x\in \R^n\;\bigg\vert\; x = \sum_{k=0}^n t_k p_k; \, 0\leq t_k\leq 1;\,\sum_{k=0}^n = 1\right\}.
\]
\end{definition}
\noindent
Over the n--simplex $\triangle$ we consider the following set of functions:
\[
C(\triangle) := C(\triangle, \R) := \{f:\triangle\to\R\st \text{$f$ is continuous on $\triangle$}\}.
\]
It is easy to verify that $(C(\triangle),\|\bullet\|_{\infty,\triangle})$ is a complete metric linear space.

Now let $1<N\in\N$ and suppose that $\{\triangle_i\st i \in\N_N\}$ is a family of nonempty compact subsets of $\triangle$ with the properties that:
\begin{itemize}
\item[(P1)]  $\{\triangle_i\st i \in\N_N\}$ is a set-theoretic partition of $\triangle$ in the sense that
\[
\triangle = \bigcup_{i=1}^N \triangle_i\quad\text{and}\quad \overset{\circ}{\triangle}_i\cap\overset{\circ}{\triangle}_j = \emptyset, \;\forall i\neq j,
\]
\item[(P2)] $\triangle_i$ is similar to $\triangle$, for all $i \in\N_N$. 
\item[(P3)] $\triangle_i$ is congruent to $\triangle_j$, for all $i,j \in\N_N$. 
\end{itemize}
\noindent
Note that conditions (P1), (P2), and (P3) imply the existence of $N$ unique contractive similitudes $u_i:\triangle\to\triangle_i$ given by
\be\label{884}
u_i = a\,O_i + \tau_i, \quad i = 1, \ldots, N,
\ee
where $a < 1$ is the similarity constant or the similarity ratio for $\triangle_i$ with respect to $\triangle$, $O_i$ is an orthogonal transformation on $\R^n$, and $\tau_i$ a translation in $\R^n$.

Let $V$ be the set of vertices of $\triangle$. We denote the set of all distinct vertices of the subtriangles $\{\triangle_i\st i \in\N_N\}$ by $V_i$. Suppose there exists a \textbf{labelling map} $\ell: \cup V_i := \bigcup_{i=1}^N V_i \to V$ so that
\be\label{labelling}
\forall i\in \N_N\;\forall v\in V_i:\quad u_i (\ell(v)) = v.
\ee

Now, suppose that $\{\lambda_i:\triangle\to\R\st i \in\N_N\}$ is a collection of $N$ affine functions and $\{s_i\st i \in\N_N\}$ a set of $N$ real numbers. As in the previous section, we set $\blambda := (\lambda_1, \ldots, \lambda_N)$ and $\bs := (s_1, \ldots, s_N)$. Let us denote by $\A_n := \Aff (\R^n)$ the vector space of all affine mappings $\lambda:\R^n\to\R$. We like to define an RB operator 
\be\label{1000}
\Phi(\blambda)(\bs) f := \sum_{i\in \N_N} (\lambda_i\circ u_i^{-1})\chi_{\triangle_i} + \sum_{i\in \N_N} (s_i\,f\circ u_i^{-1})\chi_{\triangle_i}
\ee
on a subspace $C_0(\triangle)$ of $C(\triangle)$ so that
\[
\Phi(\blambda)(\bs): \left(\underset{i=1}{\overset{N}{\times}} \A_n\right) \times \R^N \times C_0(\triangle)\to C_0(\triangle).
\]

The affine mappings $\lambda_i$ are usually determined by interpolation conditions. Consider thus the interpolation set 
\be\label{Z}
Z := \left\{(v,z_v)\in \triangle\times \R\st v\in\cup V_i\right\}
\ee
Then, for all $i \in\N_N$, the affine mapping $\lambda_i$ is uniquely determined by the interpolation conditions
\be\label{311a}
\lambda_i (\ell(v)) + s_i\,z_{\ell(v)} = z_v, \quad\forall v\in V_i.
\ee

In order for $\Phi f$ to be well-defined on and continuous across adjacent triangles $\triangle_i$ and $\triangle_j$, one needs to impose the following join-up conditions:
\be\label{joinup2d}
\lambda_i\circ u_i^{-1} (x,y) + s_i f\circ u_i^{-1} (x,y) = \lambda_j\circ u_j^{-1} (x,y) + s_j f\circ u_j^{-1} (x,y),
\ee
for all $(x,y)\in e_{ij} := \triangle_i\cap\triangle_j$, $i,j\in \N_N$ with $i\neq j$. The quantity $e_{ij}$ is called a \textit{common edge} of $\triangle_i$ and $\triangle_j$. 

The next result, which is essentially Theorem 6 in \cite{ghm1} gives conditions for \eqref{joinup2d} to be satisfied.

\begin{theorem}\label{thm5}
Let $\triangle$ be a $n$--simplex and $\{\triangle_i\st i \in\N_N\}$ a family of nonempty compact subsets of $\triangle$ satisfying conditions (P1), (P2), and (P3). Let $Z$ be an interpolation set of the form \eqref{Z} and let $C_0 (\triangle) := \left\{f\in C(\triangle)\st f(v) = z_v, \;v\in\cup V_i\right\}$. Suppose that there exists a labelling map $\ell$ as defined in \eqref{labelling}. Further suppose that $\bs := (s, \ldots, s)$, with $|s| < 1$. Then the RB operator $\Phi$ defined by \eqref{1000} maps $C_0(\triangle)$ into itself, is well-defined, and contractive on the complete {\em metric subspace} $(C_0(\triangle), \|\bullet\|_{\infty,\triangle})$ of $(C(\triangle), \|\bullet\|_{\infty,\triangle})$.
\end{theorem}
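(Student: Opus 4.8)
The plan is to verify the three claimed properties of $\Phi$ in turn: that it maps $C_0(\triangle)$ into itself, that it is well-defined (i.e., the two branches agree on the overlaps $e_{ij}$), and that it is a contraction. I would begin with well-definedness, since this is the crux and the other two follow more easily once it is in place. The potential ambiguity in \eqref{1000} occurs only on a common edge $e_{ij} = \triangle_i \cap \triangle_j$ with $i \neq j$; by property (P1) the interiors are disjoint, so any $x$ lies in at most two of the subsimplices whose interiors matter, and on $e_{ij}$ we must check that $(\lambda_i \circ u_i^{-1})(x) + s(f\circ u_i^{-1})(x)$ equals the analogous expression with $j$. This is exactly the join-up condition \eqref{joinup2d}, so the task is to show that the labelling map $\ell$ together with the interpolation conditions \eqref{311a} forces \eqref{joinup2d}.

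For that step I would argue as follows. Fix a common edge $e_{ij}$; it is itself a face of both $\triangle_i$ and $\triangle_j$, hence the convex hull of some subset of the shared vertices in $V_i \cap V_j$. For $f \in C_0(\triangle)$ and a vertex $v \in V_i$, equation \eqref{labelling} gives $u_i^{-1}(v) = \ell(v)$, so $(\lambda_i \circ u_i^{-1})(v) + s(f \circ u_i^{-1})(v) = \lambda_i(\ell(v)) + s\,f(\ell(v)) = \lambda_i(\ell(v)) + s\,z_{\ell(v)} = z_v$ by \eqref{311a} and the definition of $C_0(\triangle)$. Thus the branch-$i$ expression and the branch-$j$ expression both equal $z_v$ at every shared vertex $v$ bounding $e_{ij}$. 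Now both expressions are affine on $e_{ij}$: indeed $x \mapsto \lambda_i(u_i^{-1}(x))$ is affine because $\lambda_i$ is affine and $u_i^{-1}$ is an affine bijection (by \eqref{884}), and $x \mapsto f(u_i^{-1}(x))$ restricted to $e_{ij}$ — here one uses that $f$ restricted to the preimage edge is the restriction of $f$ to a face of $\triangle$, and for the argument to close one invokes that $f$ agrees with the affine interpolant of its vertex values along each such face; more carefully, one shows by induction on the dimension of the face that the two branch expressions, each being a function that is continuous on $e_{ij}$ and built from the same $f$, agree. Two affine functions on a simplex (here the edge, itself a lower-dimensional simplex) that agree at all its vertices are identical, which yields \eqref{joinup2d}.

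Once well-definedness is established, mapping $C_0(\triangle)$ into itself is a short check: $\Phi f$ is continuous on each closed piece $\triangle_i$ (as a composition and sum of continuous maps) and the branches agree on overlaps, so $\Phi f \in C(\triangle)$ by the pasting lemma; and $(\Phi f)(v) = z_v$ for every $v \in \cup V_i$ by the computation already done above. The contractivity is immediate from the general estimate \eqref{estim} (or \eqref{estim} specialized via \eqref{specialv} with $S_i \equiv s$): for $f, g \in C_0(\triangle)$ one has $\|\Phi f - \Phi g\|_{\infty,\triangle} \le |s|\,\|f - g\|_{\infty,\triangle}$, and $|s| < 1$ by hypothesis; completeness of $(C_0(\triangle), \|\bullet\|_{\infty,\triangle})$ follows because it is a closed subset (defined by the closed conditions $f(v) = z_v$) of the complete space $(C(\triangle), \|\bullet\|_{\infty,\triangle})$.

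The main obstacle is the affineness claim for $f \circ u_i^{-1}$ on a common edge — a general $f \in C_0(\triangle)$ is certainly not affine. The resolution is that one only needs the \emph{two branch expressions} to coincide, not to be affine; the correct formulation is that $f\circ u_i^{-1}$ and $f \circ u_j^{-1}$ restricted to $e_{ij}$ are the same function because $u_i^{-1}$ and $u_j^{-1}$ map $e_{ij}$ to the same face of $\triangle$ via the labelling (this is the content of the compatibility built into $\ell$), so the $s\,f$-terms cancel identically and one is left only with the affine $\lambda$-terms, which do agree by the vertex computation. Pinning down precisely why $u_i^{-1}|_{e_{ij}} = u_j^{-1}|_{e_{ij}}$ — namely that the labelling map is consistent on the shared vertices and both restrictions are the unique affine map sending those vertices to their $\ell$-images — is the step requiring the most care, and it is exactly here that one leans on Theorem 6 of \cite{ghm1}.
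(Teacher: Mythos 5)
Your final argument is correct, but note that the paper itself gives no proof of this theorem: it simply defers to Theorem~6 of \cite{ghm1}, so there is nothing to compare against line by line, and your write-up in effect supplies the missing proof. The decisive observation is the one you arrive at only in your last paragraph: since $u_i$ and $u_j$ are affine by \eqref{884} and $u_i(\ell(v))=v=u_j(\ell(v))$ for every shared vertex $v$ of the common face $e_{ij}$, the two affine maps $u_i^{-1}$ and $u_j^{-1}$ agree on the affine span of those vertices, hence on all of $e_{ij}$; therefore $f\circ u_i^{-1}=f\circ u_j^{-1}$ there for \emph{every} $f$, the $s\,f$-terms in \eqref{joinup2d} cancel identically, and one is reduced to two affine functions $\lambda_i\circ u_i^{-1}$ and $\lambda_j\circ u_j^{-1}$ that agree at the vertices of the simplex $e_{ij}$ by \eqref{311a} (with $s_i=s_j=s$) and hence everywhere on it. Your middle paragraph --- the attempt to make $f\circ u_i^{-1}$ affine on the edge and the vague ``induction on the dimension of the face'' --- is both wrong-headed and unnecessary, as you yourself recognize; it should be deleted in favour of the cancellation argument, which also handles points lying in more than two subsimplices since it is a pairwise statement. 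The remaining checks (invariance of $C_0(\triangle)$ via the vertex computation $(\Phi f)(v)=\lambda_i(\ell(v))+s\,z_{\ell(v)}=z_v$, continuity by pasting, contractivity with ratio $|s|$, and completeness of $C_0(\triangle)$ as a closed subset of $C(\triangle)$) are all correct and standard. The only hypotheses you use tacitly, which the paper also leaves implicit, are that the tessellation is face-to-face, i.e.\ $e_{ij}$ is the convex hull of the vertices in $V_i\cap V_j$, and that $V\subseteq\cup V_i$ so that $f(\ell(v))=z_{\ell(v)}$ makes sense for $f\in C_0(\triangle)$; it would be worth stating both explicitly.
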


The unique fixed point of the RB operator in Theorem \ref{thm5} is called a \textbf{multivariate real-valued affine fractal function} and its graph an \textbf{affinely generated fractal hypersurface} or an \textbf{affine fractal hypersurface}.

\begin{example}\label{ex2}
Let $n:=2$ and suppose we are given the $2$--simplex and its associated partition as depicted in Figure \ref{simp} below.
\begin{figure}[h!]\label{simp}
\begin{center}
\includegraphics[width=5cm,height=4cm]{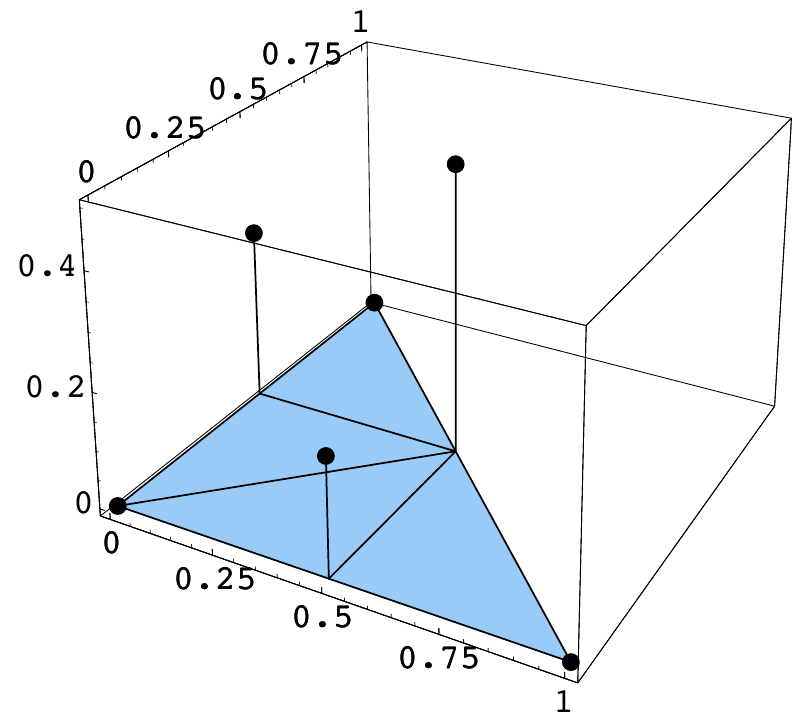}
\caption{A $2$--simplex and its associated partition.}
\end{center}
\end{figure}

\noindent
A simple computation yields for the four mappings $u_i$ the following expressions:
\begin{align*}
u_1\begin{pmatrix}x\\y\end{pmatrix} &= \begin{pmatrix} \frac12 & 0 \\0 & \frac12\end{pmatrix}\begin{pmatrix}x\\y\end{pmatrix} + \begin{pmatrix}1/2\\0\end{pmatrix},&
u_2\begin{pmatrix}x\\y\end{pmatrix} & = \begin{pmatrix} -\frac12 & 0\\0 & \frac12\end{pmatrix}\begin{pmatrix}x\\y\end{pmatrix} + \begin{pmatrix}1/2\\0\\\end{pmatrix}\\
u_3\begin{pmatrix}x\\y\end{pmatrix} &= \begin{pmatrix} \frac12 & 0\\0 & -\frac12\end{pmatrix}\begin{pmatrix}x\\y\end{pmatrix} + \begin{pmatrix}0\\1/2\end{pmatrix},&
u_4\begin{pmatrix}x\\y\end{pmatrix} & = \begin{pmatrix} \frac12 & 0\\0 & \frac12\end{pmatrix}\begin{pmatrix}x\\y\end{pmatrix} + \begin{pmatrix}0\\1/2\end{pmatrix}.
\end{align*}
The affine functions $\lambda_i$ are then given by 
\begin{align*}
\lambda_1 (x, y) & = \lambda_2 (x, y) = -z_1 x + (z_2 - z_1) y + z_1,\\
\lambda_3 (x, y) & = \lambda_4 (x, y) = (z_2 - z_3) x - z_3 y + z_3, 
\end{align*}
where $z_1, z_2,$, and $z_3$ are the nonzero $z$ values. (See the figure above.)
The sequence of graphs in Figure \ref{surf} shows the generation of the fractal surface.
\begin{figure}[h!]
\begin{center}\includegraphics[width=5cm,height=4cm]{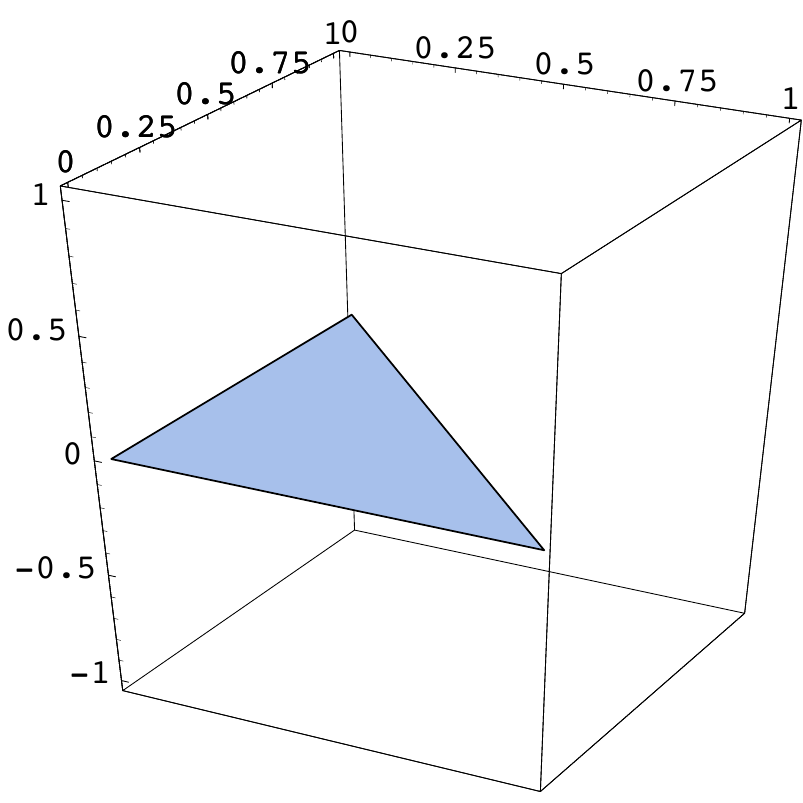}\qquad
\includegraphics[width=5cm,height=4cm]{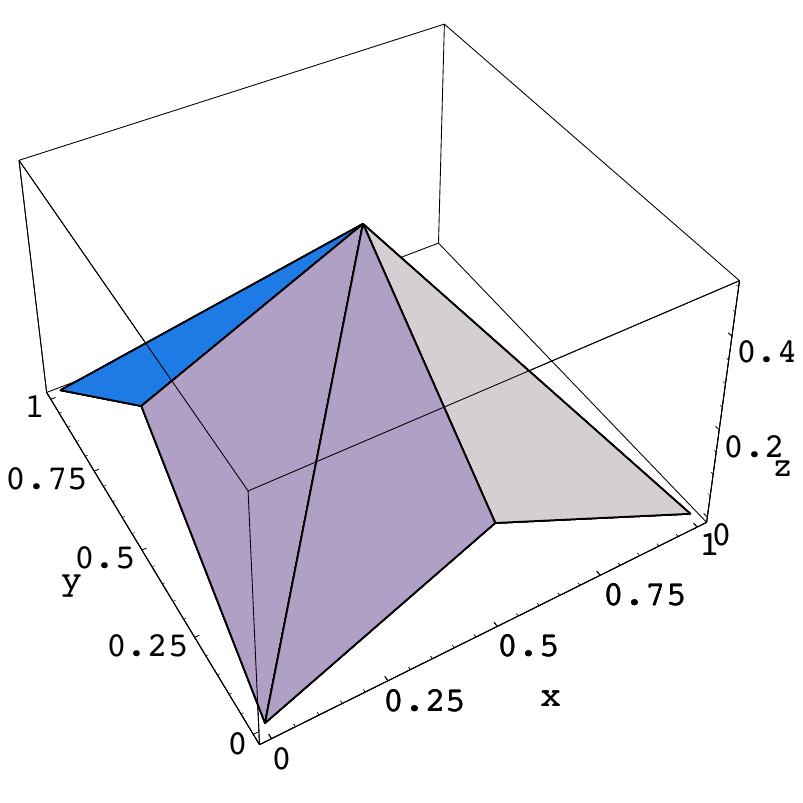}
\end{center}
\begin{center}\includegraphics[width=5cm,height=4cm]{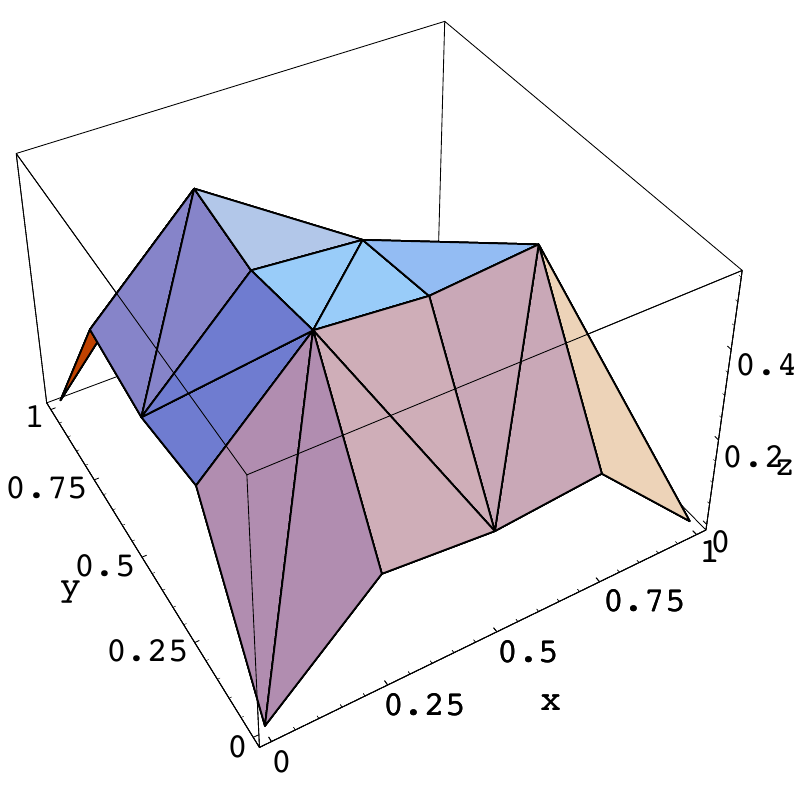}\qquad
\includegraphics[width=5cm,height=4cm]{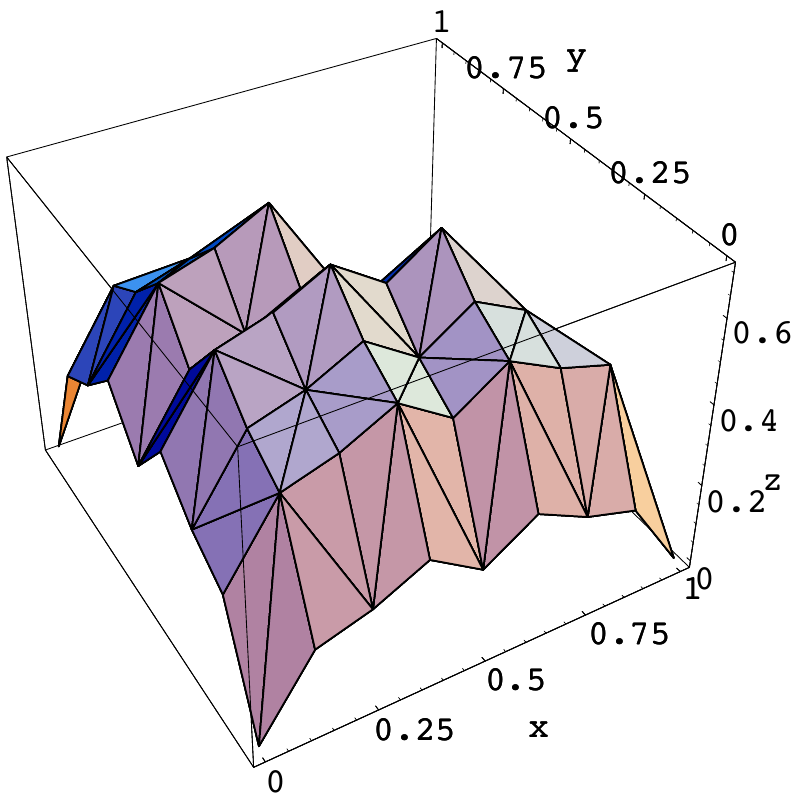}
\end{center}%
\begin{center}\includegraphics[width=5cm,height=4cm]{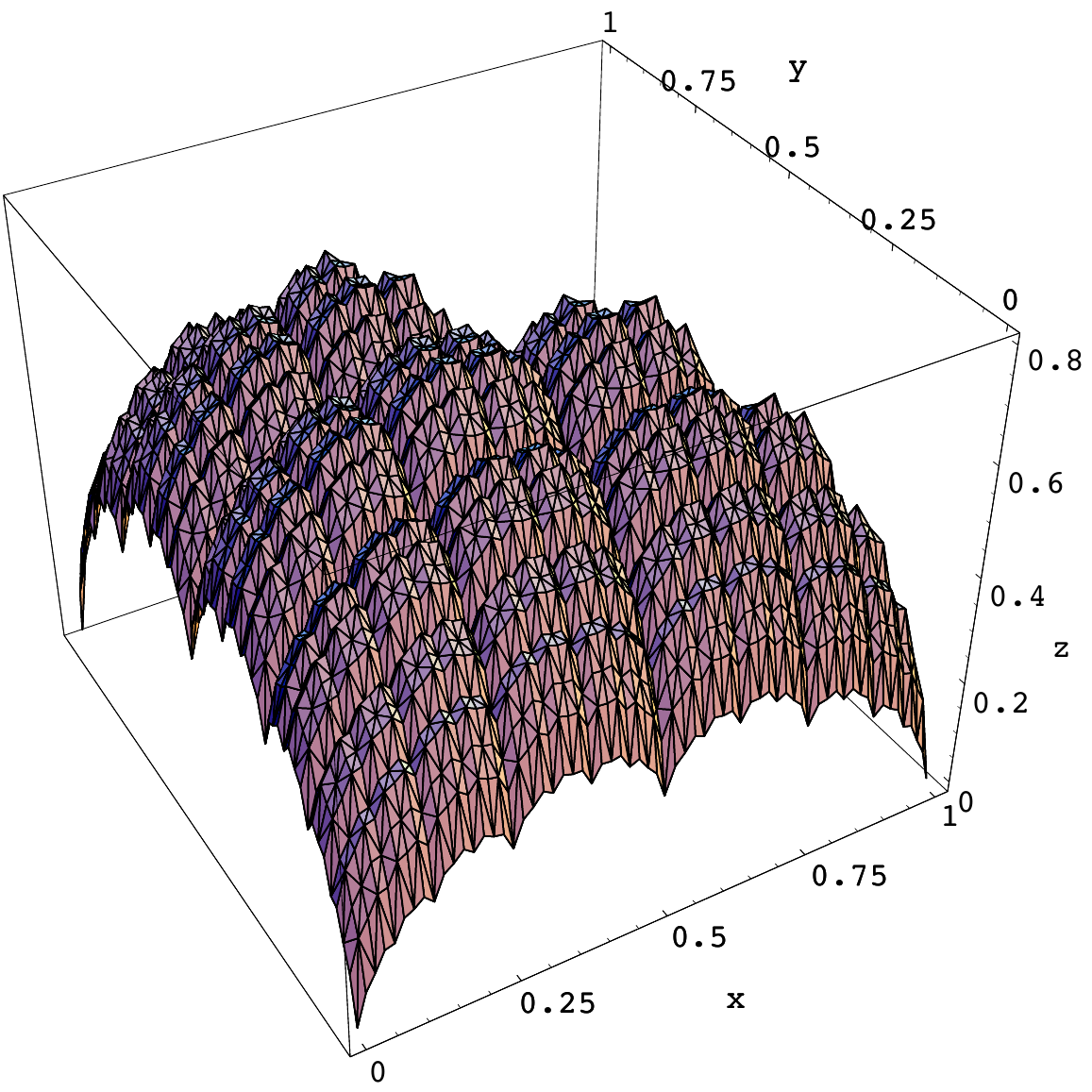}
\end{center}
\caption{The generation of an affine fractal surface.}\label{surf}
\end{figure}
\end{example}

Using Theorem \ref{thm5} we can construct a basis for fractal surfaces. For this purpose, we denote by $\cS(\triangle; \A_n)$ the set of all fractal functions generated by the RB operator \eqref{1000} subject to the conditions stated in Theorem \ref{thm5}.  The set $\cS(\triangle; \A_n)$ becomes for {\em fixed} $\bs$ a complete metric linear space inheriting its metric from $C(\triangle)$. Note that $\dim\cS(\triangle; \A_n) = 3N - \card \left(\cup V_i\right)$; 3 free parameter for each of the $N$ affine functions $\lambda_i$ and $\card \left(\cup V_i\right)$ many interpolation conditions at the vertices. 

This observation suggests the construction of a $n$-dimensional system of Lagrange interpolants for each affine fractal function $\ff$ of the form
\be\label{444}
\fB := \left\{\fb_v:\triangle\to\R\st v\in \cup V_i\right \},
\ee
where each $\fb_v$ is the unique affine fractal hypersurface interpolating the set 
$$
Z_v := \left\{(v',\delta_{v v'})\st v'\in\cup V_i\right \}, \qquad v\in\cup V_i.
$$ 
We also refer to the set of these affine fractal functions as an \textbf{affine fractal basis}. Thus, we have the following result.
\begin{theorem}
Let $\ff$ be an affine fractal function with associated interpolation set $Z$ as defined in \eqref{Z}. Then there exist an affine fractal basis of Lagrange-type \eqref{444} of cardinality $\card \left(\cup V_i\right)$ so that
\[
\ff = \sum_{v\in\cup V_i} z_v\,\fb_v.
\]
\end{theorem}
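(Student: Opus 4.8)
The plan is to exhibit the claimed expansion directly using the Lagrange-interpolation property of the basis functions $\fb_v$ together with the linear-isomorphism result of Theorem \ref{thm4}, specialised to the affine setting. First I would recall that, for fixed $\bs=(s,\ldots,s)$ with $|s|<1$, the space $\cS(\triangle;\A_n)$ of affine fractal functions is a finite-dimensional linear space: by Theorem \ref{thm4} (in the affine version, where the vector $\blambda$ ranges over $\underset{i=1}{\overset{N}{\times}}\A_n$ rather than over $\underset{i=1}{\overset{N}{\times}}B(\Omega)$, but the proof is verbatim the same since $\Phi(\blambda)(\bs)$ is still affine-linear in $\blambda$ and has a unique fixed point by Theorem \ref{thm5}), the map $\blambda\mapsto\ff(\blambda)$ is a linear isomorphism onto $\cS(\triangle;\A_n)$ once the interpolation data $Z$ are allowed to vary; hence linear combinations of affine fractal functions on the same partition with the same $\bs$ are again affine fractal functions, with interpolation values adding accordingly.

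Next I would observe that each $\fb_v\in\cS(\triangle;\A_n)$ by construction, and that for $v,v'\in\cup V_i$ one has $\fb_v(v')=\delta_{vv'}$. Then I would form $g:=\sum_{v\in\cup V_i}z_v\,\fb_v$. By the linearity just noted, $g$ is again an affine fractal function in $\cS(\triangle;\A_n)$ for the same fixed $\bs$; and evaluating at any vertex $v'\in\cup V_i$ gives $g(v')=\sum_{v}z_v\,\fb_v(v')=\sum_{v}z_v\,\delta_{vv'}=z_{v'}$, so $g$ satisfies exactly the interpolation conditions \eqref{311a} that define $\ff$ through $Z$. It remains to conclude $g=\ff$. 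For this I would invoke uniqueness: the affine maps $\lambda_i$ are, by \eqref{311a} and the existence of the labelling map $\ell$, uniquely determined by the interpolation values $\{z_v\}$, hence the RB operator $\Phi(\blambda)(\bs)$ is the same for $g$ and for $\ff$, and by Theorem \ref{thm5} this operator has a unique fixed point in $C_0(\triangle)$; since both $g$ and $\ff$ are that fixed point, $g=\ff$.

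The only genuine subtlety, and the step I expect to be the main obstacle, is justifying that the affine functions $\lambda_i$ are truly determined by the vertex data — i.e.\ that the interpolation conditions \eqref{311a} form a well-posed (square, nonsingular) linear system for each $\lambda_i$. This is where affine independence of the simplex vertices and the compatibility furnished by the labelling map $\ell$ enter: for each $i$, the set $\{\ell(v)\st v\in V_i\}$ must be exactly the $n+1$ affinely independent vertices of $\triangle$, so that prescribing $\lambda_i(\ell(v))=z_v-s\,z_{\ell(v)}$ at these $n+1$ points pins down the affine map $\lambda_i$ uniquely. Granting this (it is implicit in the hypotheses of Theorem \ref{thm5} and in the count $\dim\cS(\triangle;\A_n)=3N-\card(\cup V_i)$ recorded above, which presupposes the system is consistent and of full rank), the argument closes. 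I would also note in passing that the cardinality statement is immediate, since $\fB$ is indexed by $\cup V_i$; and that $\fB$ is in fact a basis of $\cS(\triangle;\A_n)$ because the $\card(\cup V_i)$ functions $\fb_v$ are linearly independent — a relation $\sum c_v\fb_v=0$ evaluated at each vertex forces all $c_v=0$ — and their number equals $\dim\cS(\triangle;\A_n)$, though only the spanning/representation part is needed for the stated theorem.
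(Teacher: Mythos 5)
Your argument is correct and follows the route the paper itself intends: the theorem is stated there as an immediate consequence of the linearity of $\blambda\mapsto\ff(\blambda)$ from Theorem \ref{thm4}, the linear dependence of $\blambda$ on the interpolation data via \eqref{311a} (well-posed because $\ell(V_i)=u_i^{-1}(V_i)=V$ consists of the $n+1$ affinely independent vertices of $\triangle$), and the uniqueness of the fixed point of the RB operator, which is exactly the chain you spell out. Your verification that $g=\sum_v z_v\fb_v$ satisfies the same interpolation conditions and is generated by the same $\blambda$, hence coincides with $\ff$, supplies precisely the details the paper leaves implicit.
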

\begin{example}
In Figure \ref{basfun}, three of the six graphs of the affine fractal basis functions for the affine fractal surface constructed in Example \ref{ex2} are displayed.
\begin{figure}[h!]
\begin{center}
\includegraphics[width=5cm,height=4cm]{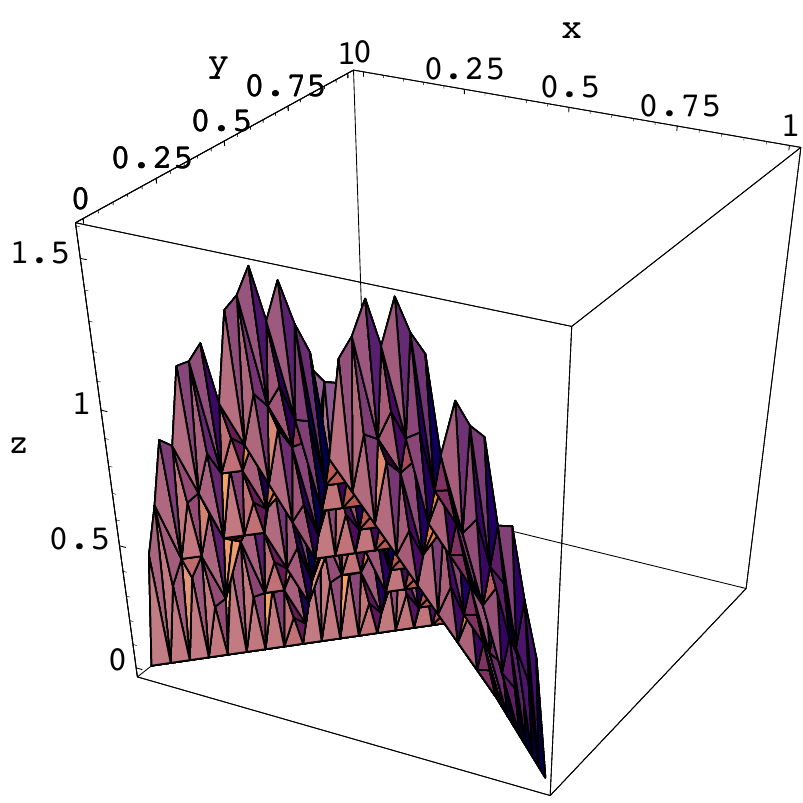}\hspace{1cm}
\includegraphics[width=5cm,height=4cm]{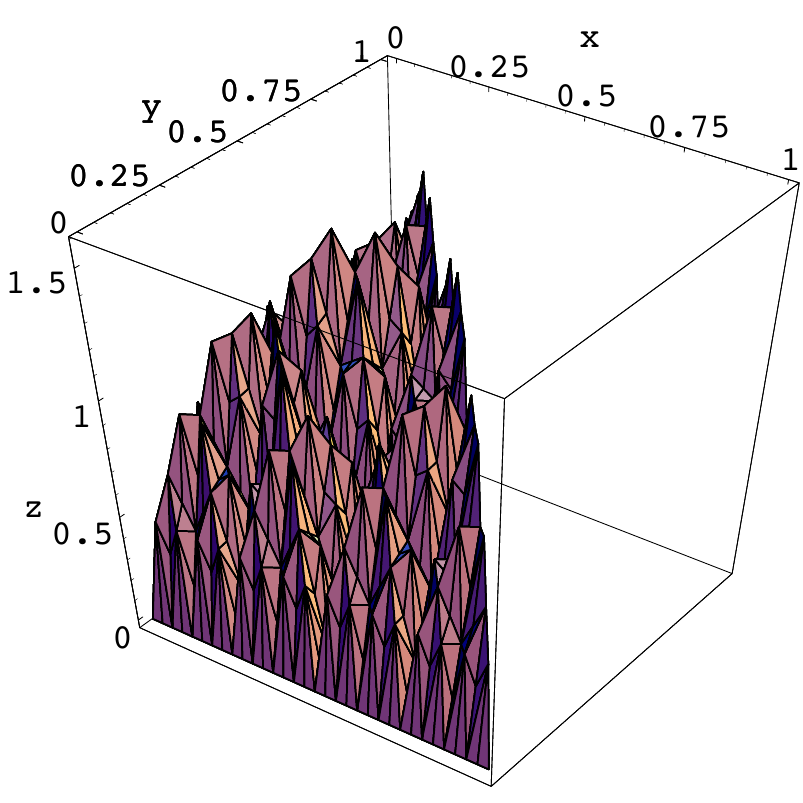}%
\end{center}
\begin{center}\includegraphics[width=5cm,height=4cm]{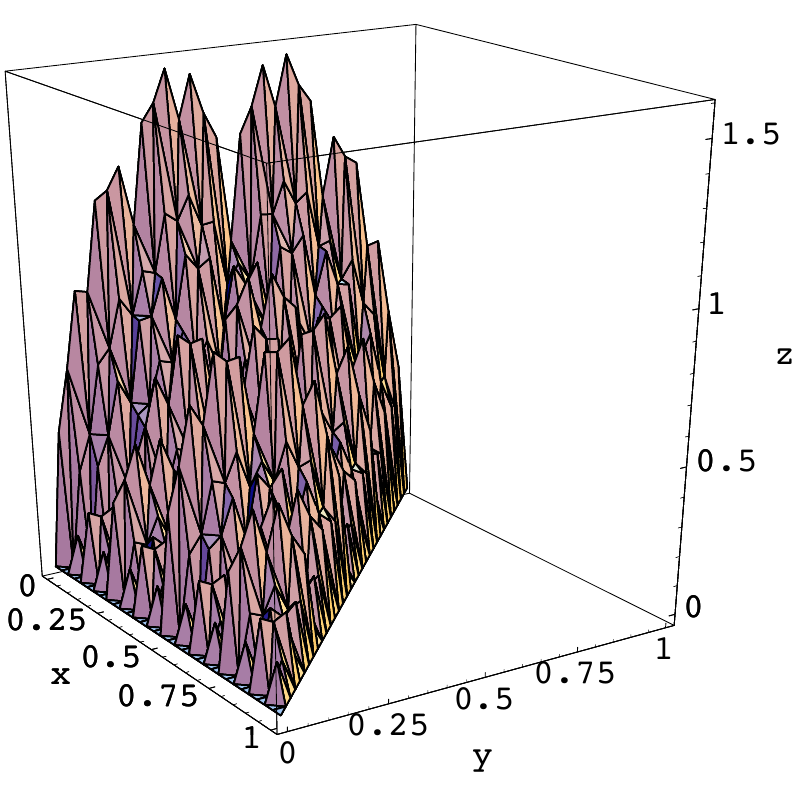}
\end{center}%
\caption{Some affine fractal basis functions.}\label{basfun}
\end{figure}
\end{example}

As $\cS(\triangle; \A_n)$ is finite-dimensional, we can apply the Gram--Schmidt Orthogonalization Process to obtain an {\em orthonormal basis} for $\cS(\triangle; \A_n)$ consisting of affine fractal basis functions. (See also \cite{ghm1,ghm2,m95,m10}.) This orthonormal basis will play an important role in the connection with wavelet sets and affine Weyl groups.
\section{Root Systems and Affine Weyl Groups}\label{sec5}
In the current section, we introduce root systems, reflections about affine hyperplane and the associated affine Weyl groups. We only present those concepts that are of importance for later developments and refer the reader to \cite{bour,brown,cox,gb,gun,hum,lang} for a more in-depth presentation of reflection groups and root systems.

For the remainder of this section, we denote by $\E^n$ $n$-dimensional Euclidean space endowed with the Euclidean inner product $\inn{\bullet}{\bullet}$.

Let $H\subset\mathbb{E}^n$ be a linear hyperplane, i.e., a codimension 1 linear subspace of $\E^n$. 
\begin{definition}\label{def5}
A linear transformation $r:\E^n\to\E^n$ is called a \textbf{reflection about $H$} if 
\begin{itemize}
\item[(i)] $r (H) = H$;
\item[(ii)] $r(x) = -x$, for all $x\in H^\perp$.
\end{itemize}
\end{definition}
\noindent
A straight-forward computation yields an explicit representation of $r$:
\be\label{r}
r_\alpha(x) = x - \displaystyle{\frac{2\inn{x}{\alpha}}{\inn{\alpha}{\alpha}}}\,\alpha,\quad\text{for fixed $0\neq \alpha\in H^\perp$.}
\ee
A discrete group generated by a set of reflections in $\E^n$ is called a \textbf{Eulidean reflection group}. An abstract group that has a representation in terms of reflections is termed a \textbf{Coxeter group}. More precisely, a Coxeter group is a discrete group $\mathcal{C}$ with a finite set of generators $\{\alpha_i\,:\,i = 1,\ldots, k\}$ satisfying
        \[
        \mathcal{C} := \bigl\langle \alpha_1,\ldots, \alpha_k\,|\, (\alpha_i \alpha_j)^{m_{ij}} = 1,\; 1 \leq i,j \leq k\bigr\rangle
        \]
        where $m_{ii} = 1$, for all $i$, and $m_{ij}\geq 2$, for all $i\neq j$. ($m_{ij}=\infty$ is used to indicate that no relation exists.)
It can be shown that finite Coxeter groups are isomorphic to finite Euclidean reflection groups. \cite{cox}
\begin{example} 
Klein's Four-Group $V$ or dihedral group $D_4$ of order 4: 
$$
V = D_4 = \bigl\langle \alpha,\beta \st \alpha^2 = \beta^2 = (\alpha \beta)^2 = 1\bigr\rangle.
$$
Geometrically: Symmetry group of the unit square $[-\frac12,\frac12]\times[-\frac12,\frac12]$ centered at the origin.
\end{example}

\begin{definition}
A \textbf{root system} ${R}$ is a finite set of vectors $\alpha_1, \ldots, \alpha_k\in$ $\mathbb{R}^n\setminus\{0\}$ with the properties that
\begin{enumerate}
\item[(R1)]   $\mathbb{R}^n = \textrm{span}\,\{\alpha_1,\ldots, \alpha_k\}$;
\item[(R2)]   $\alpha, t \alpha\in R$ if and only if $t = \pm 1$;
\item[(R3)]   $\forall \alpha\in R$: $r_\alpha(R)= R$, where $r_\alpha$ is the reflection through the hyperplane orthogonal to $\alpha$;
\item[(R4)]   $\forall \alpha, \beta\in R$: $\displaystyle{\inn{\beta}{\alpha^\vee}\in\mathbb{Z}}$, where $\alpha^\vee := 2\,\alpha/\inn{\alpha}{\alpha}$.
\end{enumerate}
The dimension of $\R^n$, i.e., $n$, is called the \textbf{rank} of the root system.
\end{definition}
\noindent
We note that condition (R4) restricts the possible angles between roots: Let $\alpha, \beta$ be two roots and let
\[
n(\beta,\alpha) := \inn{\beta}{\alpha^\vee} = 2 \frac{\inn{\alpha}{\beta}}{\inn{\alpha}{\alpha}} \in \Z.
\]
Denote by $|\alpha| := \inn{\alpha}{\alpha}^{1/2}$ the length of $\alpha$ and by $\theta$ the angle between $\alpha$ and $\beta$. Then $\inn{\alpha}{\beta} = |\alpha|\,|\beta|\,\cos\theta$ and thus
\be\label{n}
n(\beta,\alpha) = 2\,\frac{|\beta|}{|\alpha|}\,\cos\theta.
\ee
It follows from $\eqref{n}$ that 
$$
n(\beta,\alpha)\, n(\alpha,\beta) = 4 \cos^2\theta.
$$
Hence,
$$
n(\beta,\alpha)\in \Z\quad\Longrightarrow\quad 4\cos^2\theta \in\{0,1,2,3,4\}.
$$
The following table shows the possible angles $\theta$ and thus the relation between the two roots $\alpha$ and $\beta$.
\nl
\begin{center}
\hspace*{-19pt}
\begin{tabular}{lcrlcrlcrlcr}
$n(\beta,\alpha)$ & = & 0, & $n(\alpha,\beta)$ & = & 0, & $\theta$ & = & $\pi/2$.\\
$n(\beta,\alpha)$ & = & 1, & $n(\alpha,\beta)$ & = & 1, & $\theta$ & = & $\pi/3$, & $|\beta|$ & = & $|\alpha|.$\\
$n(\beta,\alpha)$ & = & -1, & $n(\alpha,\beta)$ & = & -1, & $\theta$ & =& $2\pi/3$, & $|\beta|$ & = & $|\alpha|.$\\
$n(\beta,\alpha)$ & = & 1, & $n(\alpha,\beta)$ & = & 2, & $\theta$ & =& $\pi/4$, & $|\beta|$ & = & $\sqrt{2}|\alpha|.$\\
$n(\beta,\alpha)$ & = & -1, & $n(\alpha,\beta)$ & = & -2, & $\theta$ & =& $3\pi/4$, & $|\beta|$ & = & $\sqrt{2}|\alpha|.$\\
$n(\beta,\alpha)$ & = & 1, & $n(\alpha,\beta)$ & = & 3, & $\theta$ & =& $\pi/6$, & $|\beta|$ & = & $\sqrt{3}|\alpha|.$\\
$n(\beta,\alpha)$ & = & -1, & $n(\alpha,\beta)$ & = & -3, & $\theta$ & =& $5\pi/6$, & $|\beta|$ & = & $\sqrt{3}|\alpha|.$\\
\end{tabular}
\end{center}
\nl
Note that the value of $ \theta$ determines both $n(\alpha,\beta)$ as well as $\{|\alpha|/|\beta|, |\beta|/|\alpha|\}$.
\begin{example}
The following are three examples of root systems in $\E^2$.
\begin{center}
\begin{figure}[h!]
\includegraphics[width = 3.5cm, height = 3cm]{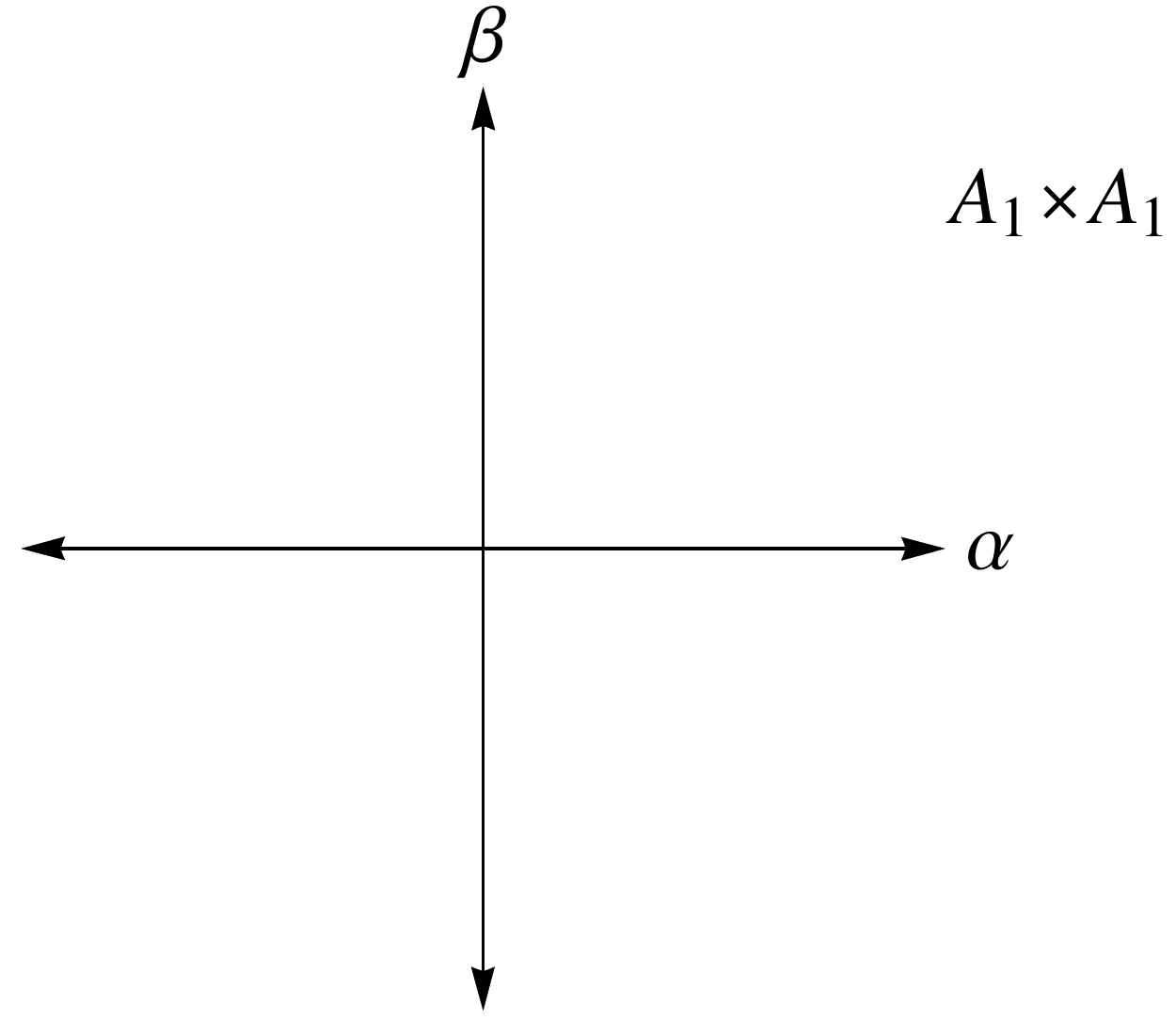}
\hspace*{2cm}\includegraphics[width = 3.5cm, height = 3cm]{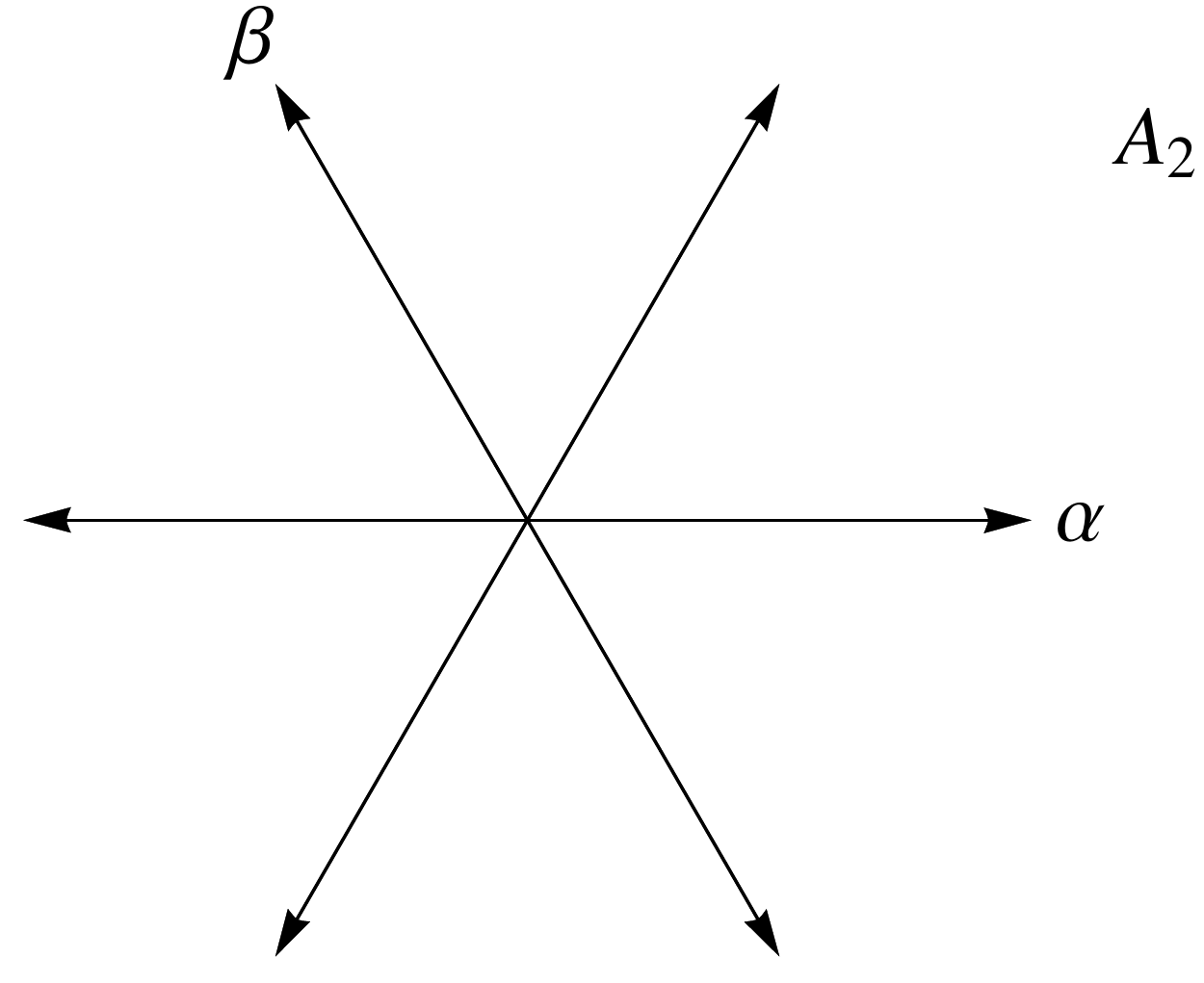}\vspace*{1cm}
\includegraphics[width = 3.5cm, height = 3cm]{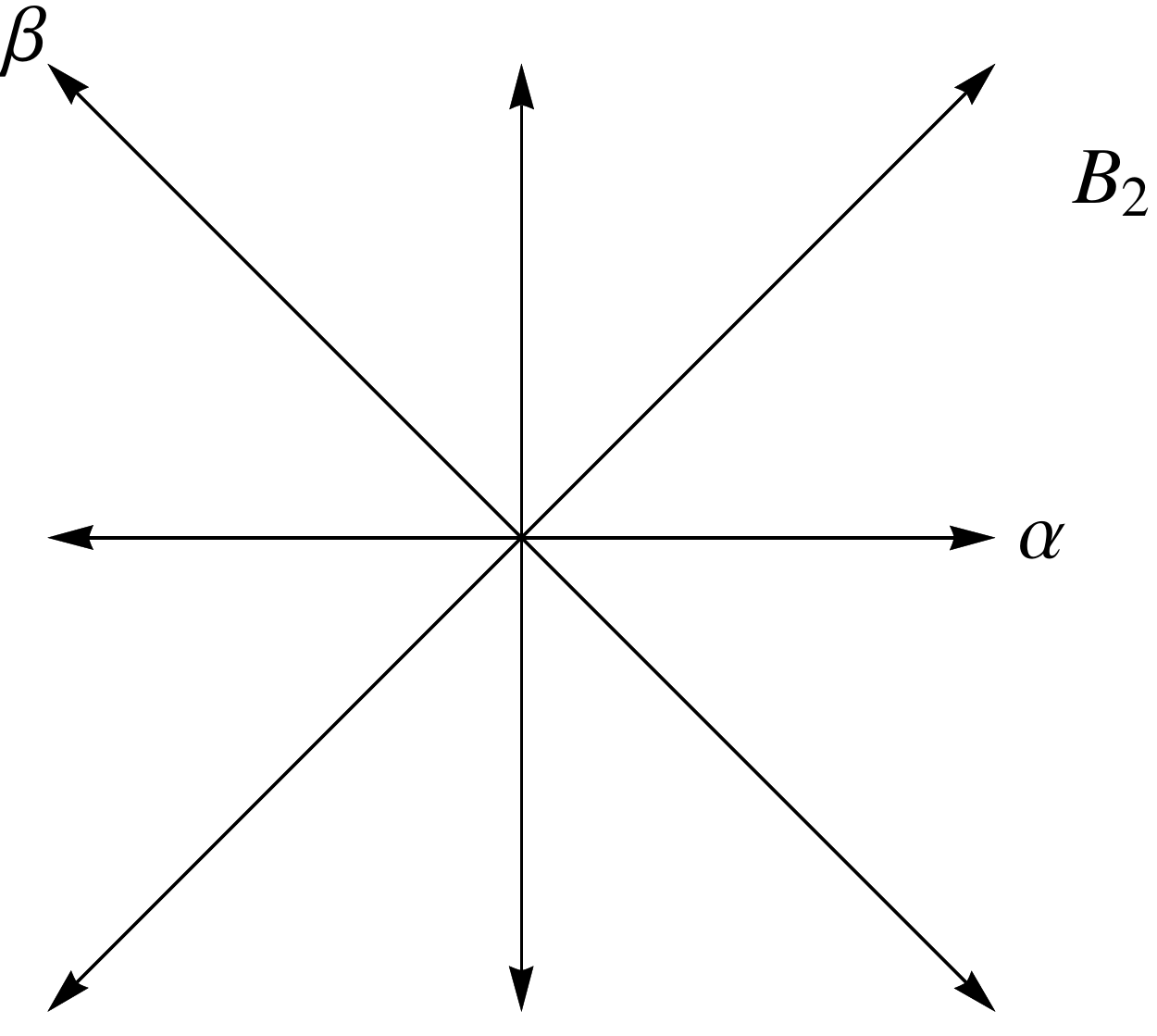}
\caption{Three root systems of rank 2.}
\end{figure}
\end{center}
\end{example}
Roots may be divided into two classes. To this end, choose $v\in \R^n$ so that $\inn{v}{\alpha} \neq 0$, for all roots $\alpha\in R$.
\ml   
\noindent
\textbf{Set of positive roots}: $\quad R^+ :=\{\alpha\in R\st \inn{v}{\alpha} > 0\}$.
\ml  
\noindent 
\textbf{Set of negative roots}: $\hspace*{8pt} R^- :=\{\alpha\in R\st \inn{v}{\alpha} < 0\}$.
\ml
\noindent
Clearly, $R = R^+ \;\dot\cup\; R^-$, where $\dot\cup$ denotes disjoint union.

A root in ${R^+}$  is called \textbf{simple} if it can not be written as the sum of two elements of $R^+$. The set $\Delta \subset R^+$ of simple roots forms a basis of $\R^n$ with the property that every $\alpha\in R$ can be written in the form
\[
\alpha = \sum k_i \,\delta_i, \quad k_i\in \Z, \; \delta_i\in \Delta,
\]
where \textit{all} $k_i > 0$ or \textit{all} $k_i < 0$.

We summarize some properties of roots and root systems below. 
\begin{proposition}
Let $R$ be a root system and $R^+$ the set of positive roots.
\begin{enumerate}
\item $R^+$ is that subset of $R$ for which the following two conditions hold:
\begin{itemize}
\item[(i)] Given $\alpha\in R$, then either $\alpha\in R^+$ or $-\alpha\in R^+$;
\item[(ii)] For all pairs $(\alpha,\beta)\in R^+\times R^+$ with $\alpha\neq\beta$, such that $\alpha+\beta\in R$, $\alpha+\beta\in R^+$.
\end{itemize}
\item For $\alpha, \beta\in \Delta$ with $\alpha\neq\beta$, $\inn{\alpha}{\beta} \leq 0$. (I.e., the angle between two simple roots is always obtuse.)
\item Assume that $\alpha,\beta\in R$, $\alpha$ and $\beta$ are not multiples of each other, and $\inn{\alpha}{\beta} > 0$. Then $\alpha - \beta\in R$.
\end{enumerate}
\end{proposition}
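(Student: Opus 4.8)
The three assertions are of increasing depth, so the plan is to treat them in order, reusing the earlier parts where convenient.

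For part (1), the direction ``$R^+$ satisfies (i) and (ii)'' is essentially contained in the discussion preceding the proposition: (i) is the decomposition $R = R^+ \;\dot\cup\; R^-$ together with the fact that $\alpha\in R^+ \iff -\alpha\in R^-$, which follows immediately from the defining inequality $\inn{v}{\alpha}>0$ and bilinearity; and (ii) follows because $\inn{v}{\alpha+\beta} = \inn{v}{\alpha}+\inn{v}{\beta} > 0$ whenever $\alpha,\beta\in R^+$, so if $\alpha+\beta\in R$ then it lies in $R^+$. For the converse — that $R^+$ is the \emph{unique} subset with these two properties — I would argue that any subset $P\subset R$ satisfying (i) and (ii) is the positive system for a suitable choice of $v$; the standard route is to show that (i) forces $P$ and $-P$ to partition $R$, and then to verify that $\Delta(P) := \{\alpha\in P : \alpha \text{ is not a sum of two elements of } P\}$ is a basis of $\R^n$ for which $P$ consists exactly of the nonnegative-integer combinations, whence $v$ may be taken as any vector in the open dual cone of $\Delta(P)$. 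I expect this uniqueness direction to be the main obstacle, since it needs the basis property of simple roots (stated but not proved in the excerpt) and a short induction on the ``height'' $\sum k_i$.

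For part (2), suppose $\alpha,\beta\in\Delta$ are distinct with $\inn{\alpha}{\beta}>0$. Consulting the angle table following \eqref{n}, a strictly positive inner product forces $\theta\in\{\pi/3,\pi/4,\pi/6\}$, and in each of those rows one of $n(\beta,\alpha)$, $n(\alpha,\beta)$ equals $+1$; say $n(\beta,\alpha) = 1$. Then $r_\alpha(\beta) = \beta - \inn{\beta}{\alpha^\vee}\alpha = \beta - \alpha\in R$ by (R3). But $\beta-\alpha$ must lie in $R^+$ or $R^-$: in the first case $\beta = \alpha + (\beta-\alpha)$ exhibits $\beta$ as a sum of two positive roots (since $\alpha\in R^+$ and $\beta-\alpha\in R^+$), contradicting simplicity of $\beta$; in the second case $\alpha = \beta + (\alpha-\beta)$ does the same for $\alpha$. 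Hence $\inn{\alpha}{\beta}\leq 0$.

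Part (3) is the ``root-string'' fact and will run along the same lines. Given $\alpha,\beta\in R$ that are not proportional with $\inn{\alpha}{\beta}>0$: then $n(\alpha,\beta)$ and $n(\beta,\alpha)$ are integers of the same (positive) sign whose product is $4\cos^2\theta \in \{1,2,3\}$ (it cannot be $0$ since $\inn{\alpha}{\beta}>0$, nor $4$ since that would force $\theta=0$ and proportionality). Therefore at least one of the two integers equals $1$ — if $n(\beta,\alpha)=1$ then $r_\alpha(\beta) = \beta-\alpha\in R$ by (R3), and since $\alpha-\beta = -(\beta-\alpha)\in R$ as well by (R2)/(R3), we conclude $\alpha-\beta\in R$ in either case. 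The only subtlety is the bookkeeping that $4\cos^2\theta$ avoids $0$ and $4$, which is exactly the hypothesis $\inn{\alpha}{\beta}>0$ plus non-proportionality, so no real obstacle remains here; the crux of the whole proposition is the uniqueness half of part (1).
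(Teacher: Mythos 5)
The paper gives no proof of this proposition at all: it is stated as a summary of standard facts about root systems, with the reader deferred to the references (Bourbaki, Humphreys, Grove--Benson, etc.). So the comparison here is against that standard treatment, which is exactly what you reproduce. Your arguments for parts (2) and (3) are correct and complete: in both you use the integrality constraint $n(\beta,\alpha)\,n(\alpha,\beta)=4\cos^2\theta$ together with the hypotheses (positivity of $\inn{\alpha}{\beta}$ excludes $0$, non-proportionality excludes $4$) to force one of the two Cartan integers to equal $1$, then apply (R3) to get $\beta-\alpha\in R$ (or $\alpha-\beta\in R$), and in (2) you derive the contradiction with simplicity from the dichotomy $\beta-\alpha\in R^+$ or $\beta-\alpha\in R^-$. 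These are the textbook proofs and nothing is missing. Likewise the forward direction of (1) is a one-line consequence of linearity of $\inn{v}{\,\cdot\,}$, as you say.

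The one substantive gap is the converse half of (1), which you explicitly leave as a plan rather than a proof: that any $P\subset R$ satisfying (i) (read as exclusive or) and (ii) equals $\{\alpha\in R : \inn{v}{\alpha}>0\}$ for some regular $v$. Your outline (take the indecomposable elements of $P$, show $P$ lies in their nonnegative integer span, take $v$ in the open dual cone) is the right shape, but two steps in it need real work and are not supplied. First, the "induction on height" is not available at this stage, because height is defined via a linear functional positive on $P$ -- whose existence is precisely what is being proved; termination of the repeated decomposition $\alpha=\beta+\gamma$ instead requires the lemma that a closed subset $P$ with $P\cap(-P)=\emptyset$ contains no finite subfamily summing to zero. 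Second, linear independence of the indecomposables of $P$ requires the obtuseness statement of part (2) applied to $P$ rather than to $R^+$, so the logical order of your three parts would have to be rearranged to avoid circularity. Since the paper itself supplies no argument either, this is the only piece you would need to close to have a complete proof.
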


Now suppose that $\alpha,\beta\in R$. Let $H$ be the hyperplane orthogonal to $\alpha$. We denote by $\alpha^*\in (\E^n)^*$ the unique element in the algebraic dual of $\E^n$ such that 
\begin{center}
$\alpha^* (H) = 0$,\quad\text{and}\quad $\alpha^*(\alpha) = 2.$
\end{center}
Then we may rewrite \eqref{r} in the form
\[
r_\alpha (\beta) = \beta - \alpha^*(\beta) \alpha = (1 - \alpha^*\otimes\alpha)(\beta).
\]
Since $\E^n$ and $(\E^n)*$ are isomorphic via the Euclidean inner product $\inn{\bullet}{\bullet}$, there exists a unique element $\alpha^\vee\in \E^n$ so that $\alpha^* = \inn{\alpha^\vee}{\bullet}$. $\alpha^\vee$ is called the \textbf{coroot} of $\alpha$. The lattice in $\E^n$ spanned by the roots $R$, respectively, coroots $R^\vee$ is called the \textbf{root}, respectively, \textbf{coroot lattice}.

The finite family of hyperplanes $\{H_\alpha \st \alpha\in R\}$, where $H_\alpha$ is the hyperplane orthogonal to $\alpha$, partitions $\E^n$ into finitely many regions. The connected components of $\E^n \setminus \displaystyle{\bigcup_{\alpha\in R} H_\alpha}$ are called the (open) \textbf{Weyl chambers} of $\E^n$.
	  
The \textbf{fundamental Weyl chamber $C$ relative to} $\Delta$ is defined by
\[
C := \bigcap_{\delta\in \Delta} \{v\in \E^n\st \inn{v}{\delta} > 0\}.
\]
It should be clear that $C$ is a simplicial cone, hence convex and connected.
\begin{center}
\begin{figure}[h!]
\includegraphics[width=5cm,height=4cm]{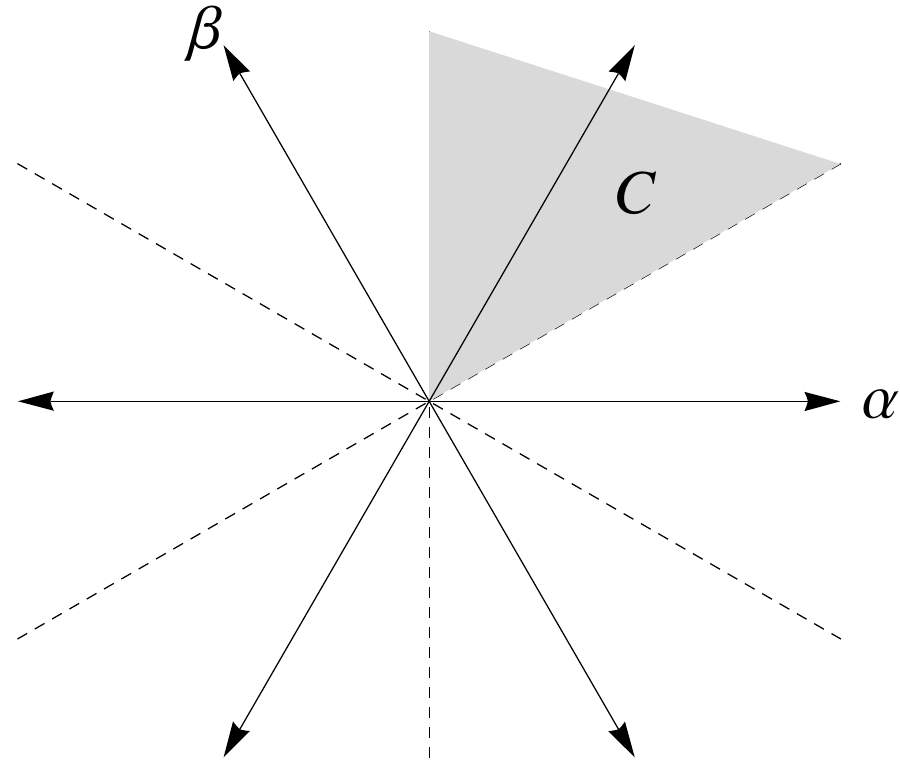}
\caption{A root system, its Weyl chambers (regions between dashed lines) and the fundamental Weyl chamber $C$.}
\end{figure}
\end{center}
\begin{definition}
The subgroup of the isometry group of a root system that is generated by the reflections through the hyperplanes orthogonal to the roots is called the \textbf{Weyl group $\mathcal{W}$} of the root system $R$.
\end{definition}

As the root system is finite, $\mathcal{W}$ is a finite reflection group. Moreover, $\mathcal{W}$ acts simply transitive on the Weyl chambers, i.e., if $C_1$ and $C_2$ are two Weyl chambers and $x_j\in C_j$, $j=1,2$, then there exists a unique $r\in \cW$ so that $r(x_1) = x_2$.

An \textbf{affine hyperplane} with respect to a root system $R$ is defined by
\[
H_{\alpha,k} := \{x\in\mathbb{R}^n\st\inn{x}{\alpha} = k\},\quad \alpha\in  R,\;k\in\mathbb{Z}.
\]
We can also consider reflections $r_{\alpha,k}$ about affine hyperplanes. Employing conditions (i) and (ii) in Definition \ref{def5} applied now to affine hyperplanes, we obtain the following expression for such reflections:
\be\label{refl}
r_{\alpha,k}(x) = x - \displaystyle{\frac{2(\inn{x}{\alpha}-k)}{\inn{\alpha}{\alpha}}}\,\alpha =: r_\alpha (x) + k\,{\alpha}^\vee.
\ee
\begin{definition}
Let $R$ be a root system and $\{H_{\alpha,k}\st \alpha\in R, \,k\in \Z\}$ its system of affine hyperplanes. The \textbf{affine Weyl group} $\widetilde{\mathcal{W}}$ for $R$ is the infinite group generated by the reflections $r_{\alpha,k}$ about the affine hyperplanes $H_{\alpha,k}$:
\[
\widetilde{\mathcal{W}} := \bigl\langle r_{\alpha,k}\st \alpha\in R,\, k\in\mathbb{Z}\bigr\rangle
\]
\end{definition}
The next result characterizes the affine Weyl group of a root system and relates it to the finite Weyl group and the lattice generated by the coroots.
\begin{theorem}[Bourbaki]
The affine Weyl group $\widetilde{\mathcal{W}}$ of a root system $R$ is the semi-direct product\footnote{Let $(G,\cdot)$ be a group with identity element $e$. Suppose $H$ is a subgroup of $G$ and $N$ a normal subgroup of $G$. Further suppose that $G = H\cdot N$ and $H\cap N = \{e\}$. Then $G$ is called the \textbf{semi-direct product of $H$ and $N$}.} $\mathcal{W}\ltimes \Gamma$, where $\Gamma$
is the abelian group generated by the coroots ${\alpha}^\vee$. Moreover, $\Gamma$ is the subgroup of translations of $\widetilde{\mathcal{W}}$ and
$\mathcal{W}$ the isotropy group (stabilizer) of the origin. The group $\mathcal{W}$ is finite and $\Gamma$ infinite.
\end{theorem}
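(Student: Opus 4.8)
The plan is to realise $\widetilde{\mathcal{W}}$ as a subgroup of the group $\Aff(\E^n)$ of affine isometries of $\E^n$ and to exploit the ``linear part'' homomorphism $\pi : \Aff(\E^n)\to O(n)$ sending an affine isometry $x\mapsto Ax+b$ to $A$. By \eqref{refl} the generator $r_{\alpha,k}$ has linear part $r_\alpha$, so $\pi$ maps $\widetilde{\mathcal{W}}$ onto $\langle r_\alpha\st\alpha\in R\rangle=\mathcal{W}$; and since $r_{\alpha,0}=r_\alpha$, the finite Weyl group $\mathcal{W}$ already sits inside $\widetilde{\mathcal{W}}$ as a section of $\pi$. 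Writing $T:=\widetilde{\mathcal{W}}\cap\ker\pi$ for the group of pure translations contained in $\widetilde{\mathcal{W}}$, we then have a split short exact sequence $1\to T\to\widetilde{\mathcal{W}}\to\mathcal{W}\to 1$, hence $\widetilde{\mathcal{W}}=T\rtimes\mathcal{W}$, and the theorem reduces to the identification $T=\Gamma$ together with two elementary finiteness observations.

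To get $\Gamma\subseteq T$, I would use that $r_\alpha$ is an involution, so \eqref{refl} gives $r_{\alpha,k}\circ r_{\alpha,0}=\tau_{k\alpha^\vee}$, where $\tau_t$ denotes translation by $t$; hence every $\tau_{\alpha^\vee}$, $\alpha\in R$, lies in $\widetilde{\mathcal{W}}$, and so does the abelian translation group $\Gamma$ they generate. For the reverse direction I would first establish that $\Gamma$ is $\mathcal{W}$-stable: since $r_\beta\in O(n)$ preserves lengths and $r_\beta\alpha\in R$ by (R3), one checks $r_\beta(\alpha^\vee)=(r_\beta\alpha)^\vee\in R^\vee$, so each $r_\beta$, and hence all of $\mathcal{W}$, carries $\Gamma$ onto $\Gamma$. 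Combined with the conjugation identity $g\,\tau_t\,g^{-1}=\tau_{\pi(g)t}$, this shows that conjugating $\tau_\gamma$ ($\gamma\in\Gamma$) by a generator $r_{\alpha,k}$ produces $\tau_{r_\alpha\gamma}\in\Gamma$, so $\Gamma\trianglelefteq\widetilde{\mathcal{W}}$.

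With normality in hand, I would take an arbitrary word $w=r_{\alpha_1,k_1}\cdots r_{\alpha_m,k_m}$, substitute $r_{\alpha_j,k_j}=\tau_{k_j\alpha_j^\vee}\,r_{\alpha_j}$, and push all translation factors to the left using $r_\beta\,\tau_\gamma=\tau_{r_\beta\gamma}\,r_\beta$ together with the $\mathcal{W}$-stability of $\Gamma$; this rewrites $w=\tau_\gamma\,v$ with $\gamma\in\Gamma$ and $v=r_{\alpha_1}\cdots r_{\alpha_m}\in\mathcal{W}$. In particular $\widetilde{\mathcal{W}}=\Gamma\cdot\mathcal{W}$, and if $w\in T$ then $v=\pi(w)=\mathrm{id}$, forcing $w=\tau_\gamma\in\Gamma$, so $T=\Gamma$. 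Since an element of $\mathcal{W}$ fixes the origin whereas a nontrivial translation does not, $\Gamma\cap\mathcal{W}=\{e\}$; hence $\widetilde{\mathcal{W}}$ is the semidirect product of $\mathcal{W}$ and $\Gamma$ in the sense of the footnote, i.e.\ $\widetilde{\mathcal{W}}=\mathcal{W}\ltimes\Gamma$. The decomposition $w=\tau_\gamma v$ further gives $w(0)=\gamma$, so the isotropy group of the origin is exactly $\mathcal{W}$ and the full translation subgroup is exactly $\Gamma$. Finally, $\mathcal{W}$ acts faithfully on the finite spanning set $R$, so it embeds in the symmetric group on $R$ and is finite, while $\Gamma\supseteq\{\tau_{k\alpha^\vee}\st k\in\Z\}$ is infinite for any $\alpha\in R$.

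The only step that is more than bookkeeping is the identification $T=\Gamma$, i.e.\ that $\widetilde{\mathcal{W}}$ contains \emph{no} translations beyond those in the coroot lattice; this is where the $\mathcal{W}$-invariance of $\Gamma$ and the resulting normal-form reduction $w=\tau_\gamma v$ do the real work. The one genuine computation the argument needs is the equivariance $r_\beta(\alpha^\vee)=(r_\beta\alpha)^\vee$, which follows immediately from the definition $\alpha^\vee=2\alpha/\inn{\alpha}{\alpha}$, the orthogonality of $r_\beta$, and axiom (R3).
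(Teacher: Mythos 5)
Your proof is correct and complete. Note that the paper itself does not prove this theorem: it is stated as a classical result attributed to Bourbaki, with the reader referred to the literature for a proof, so there is no in-paper argument to compare yours against. What you have written is essentially the standard proof (as in Bourbaki or Humphreys): embed $\widetilde{\mathcal{W}}$ in the affine isometry group, use the linear-part homomorphism $\pi$ to split off $\mathcal{W}$ via $r_{\alpha,0}=r_\alpha$, obtain $\tau_{k\alpha^\vee}=r_{\alpha,k}\circ r_{\alpha,0}$ from \eqref{refl} and $r_\alpha^2=\mathrm{id}$ to get $\Gamma\subseteq\widetilde{\mathcal{W}}$, prove $\mathcal{W}$-stability of the coroot lattice from the equivariance $r_\beta(\alpha^\vee)=(r_\beta\alpha)^\vee$ together with (R3), and then use the conjugation identity $g\,\tau_t\,g^{-1}=\tau_{\pi(g)t}$ to reach the normal form $w=\tau_\gamma v$. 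You correctly identify the one nontrivial point, namely that $\widetilde{\mathcal{W}}$ contains no translations beyond the coroot lattice, and your normal-form argument settles it. The remaining claims (stabilizer of the origin equals $\mathcal{W}$ since $w(0)=\gamma$; $\mathcal{W}$ finite because it acts faithfully on the finite spanning set $R$ by (R1) and (R3); $\Gamma$ infinite because it contains $\{\tau_{k\alpha^\vee}\st k\in\Z\}$) are all justified. The only cosmetic remark: to match the footnote's convention $G=H\cdot N$ you should observe that $\Gamma\cdot\mathcal{W}=\mathcal{W}\cdot\Gamma$, which is immediate from the normality of $\Gamma$.
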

In this context, also note the particular form of an affine reflection \eqref{refl}; it is the sum of a reflection across a linear hyperplane plus a translation along the lattice spanned by the coroot.

It can be shown that $\widetilde{\mathcal{W}}$ has a \textbf{fundamental domain $C\subset\E^n$} in the sense that no $r\in\widetilde{\mathcal{W}}$ maps a point of $C$ to another point of $C$, and for all $x\in\E^n$ there exists an $r\in\widetilde{\mathcal{W}}$ such that $r(x)\in C$. Furthermore, $C$ is a compact and convex simplex.

All affine Weyl groups (and therefore their fundamental domains) are classified. For a given dimension $n\in \N$ there exists only a \textit{finite} number of possible groups  and thus fundamental domains. The classification of so-called \textbf{irreducible} root systems (root systems that cannot be written as the union of two root systems $R_1$ and $R_2$ such that $\inn{\alpha_1}{\alpha_2} = 0$, for $\alpha_i\in R_i$, $i=1,2$) follows from the representation theory of simple Lie algebras. 

The classification yields four infinite families $A_n$ ($n\geq 1$), $B_n$ ($n\geq 2$), $C_n$ ($n\geq 3$), and $D_n$ ($n\geq 4$) and five exceptional cases $E_6$, $E_7$, $E_8$, $F_4$, and $G_2$. (The subscript indicates the rank of the root system.) For the cardinality of these families and the explicit construction of their root system as well as the geometric description of the fundamental domains, we refer the reader to \cite{bour,hum}.

Figure \ref{root} shows the three irreducible root systems of rank 2, their fundamental domains $C$ and coroot lattices. Note that the fundamental domains consist of an equiangular triangle (root system $A_2$), a $90^\circ$--$45^\circ$--$45^\circ$ triangle ($B_2$), and a $90^\circ$--$30^\circ$--$60^\circ$ triangle ($G_2$).
\begin{figure}[h!]
\begin{center}
\includegraphics[width=3cm,height=3cm]{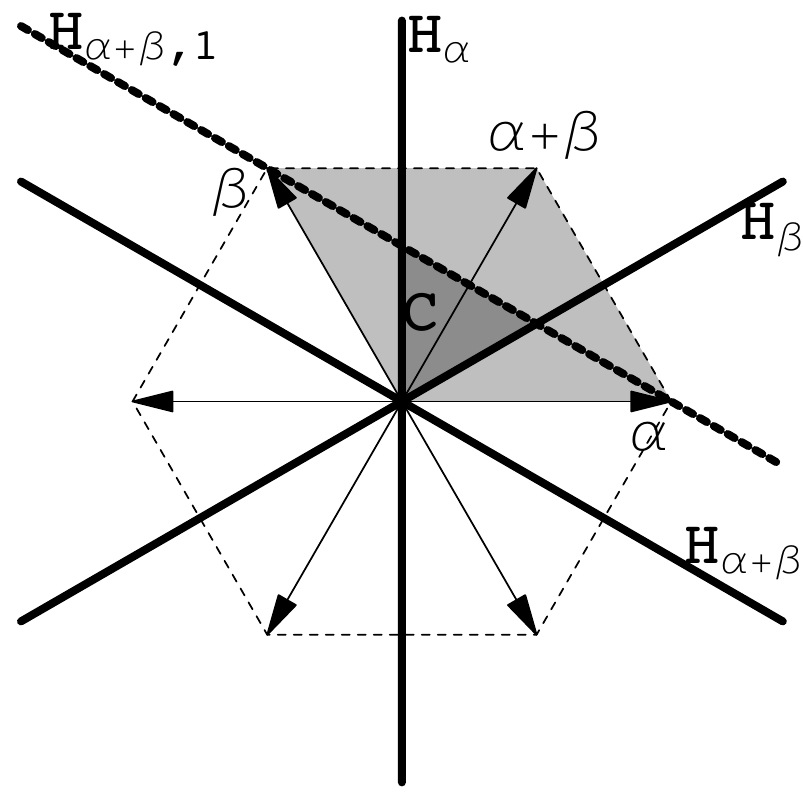}\hspace{2cm}
\includegraphics[width=3cm,height=3cm]{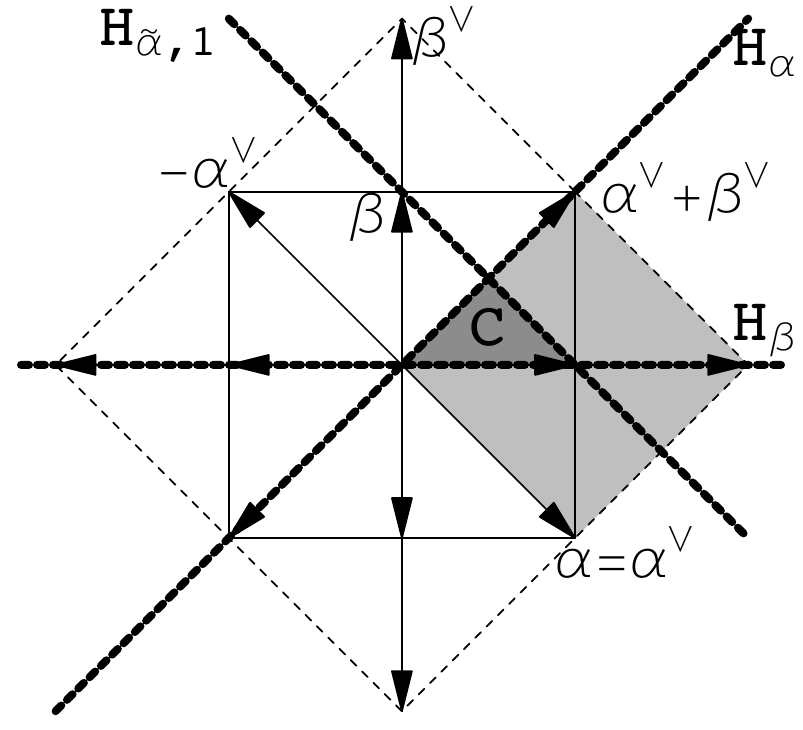}\\
\includegraphics[width=3cm,height=3cm]{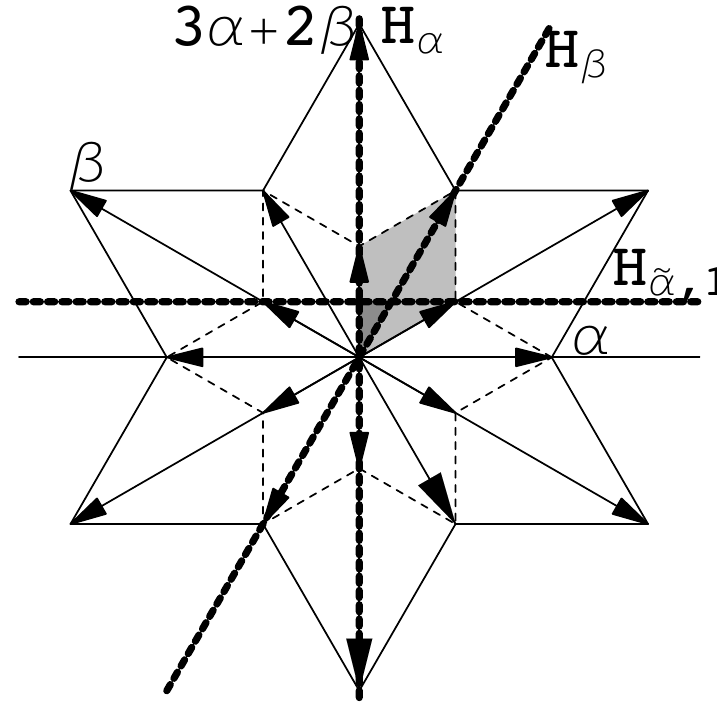}
\end{center}
\caption{The root systems $A_2$, $B_2$, and $G_2$.}\label{root}
\end{figure}

The final concept we need to introduce to set up the connection between affine fractal surfaces, affine Weyl groups and wavelet sets is that of foldable figure.
\begin{definition}
A compact connected subset $F$ of $\mathbb{E}^n$ is called a \textbf{foldable figure} if and only if there exists a finite set $\mathcal{S}$ of affine hyperplanes that cuts $F$ into finitely many congruent subfigures $F_1, \ldots, F_m$, each similar to $F$, so that reflection in any of the cutting
hyperplanes in $\mathcal{S}$ bounding $F_k$ takes it into some $F_\ell$.
\end{definition}

\begin{figure}[h]
\begin{center}
\includegraphics[width=1.5cm,height=1.5cm]{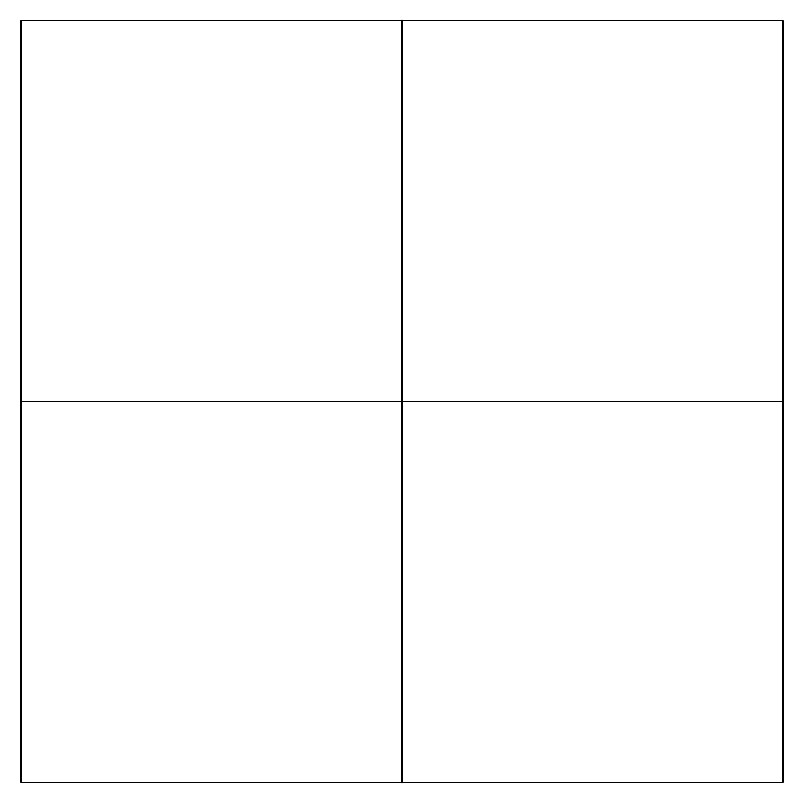}\hspace{2cm}
\includegraphics[width=1.5cm,height=1.5cm]{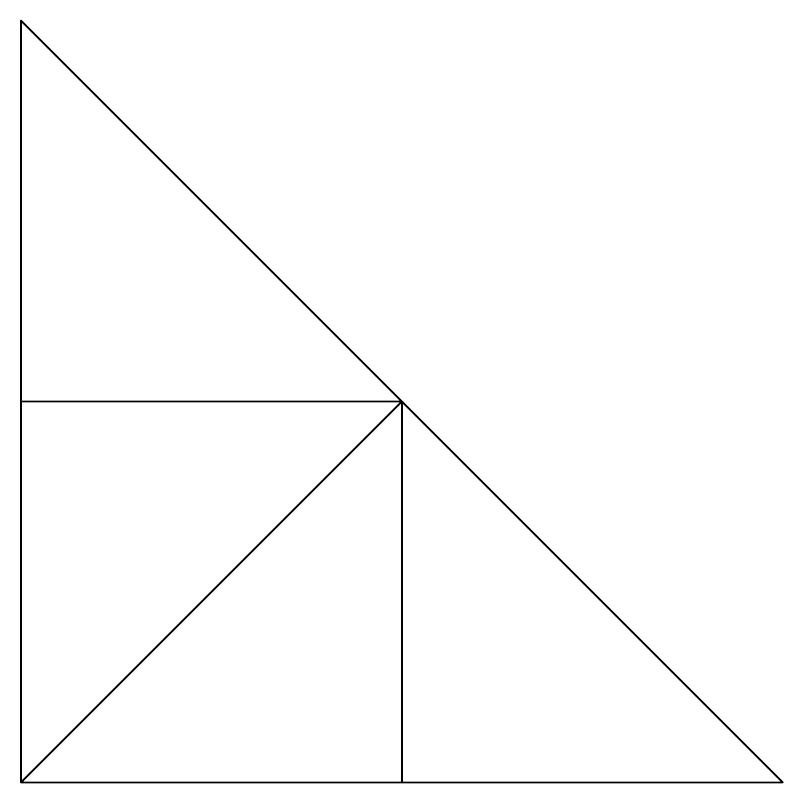}
\caption{Foldable figures corresponding to the reducible root system $A_1\times A_1$ (left) and the irreducible root system $B_2$ (right).}
\end{center}
\end{figure}
\noindent
The next theorem summarizes the connections between affine Weyl groups and their fundamental domains, and foldable figures. For proofs, see \cite{bour,hw}.
\begin{theorem}
\begin{enumerate}
\item   Let $\mathcal{G}$ be a reflection group and $\mathcal{O}_n$ the group of linear isometries of $\mathbb{E}^n$. Then there exists a homomorphism $\phi: \mathcal{G}\to\mathcal{O}_n$ given by $\phi(g)(x) = g(x) - g(0)$, $g\in\mathcal{G},\; x\in\mathbb{E}^n$.
The group $\mathcal{G}$ is called essential if $\phi(\mathcal{G})$ only fixes $0\in\mathbb{E}^n$. The elements of $\ker\phi$ are called translations. If $\mathcal{G}$ is essential and without fixed points then $\mathcal{G}$ has a compact fundamental domain.
\item  The reflection group generated by the reflections about the bounding hyperplanes of a foldable figure $F$ is the affine Weyl group $\widetilde{\mathcal{W}}$ of some root system. Moreover, $\widetilde{\mathcal{W}}$ has $F$ as a fundamental domain.
\item There exists a one-to-one correspondence between foldable figures and reflection groups that are essential and without fixed points.
\end{enumerate}
\end{theorem}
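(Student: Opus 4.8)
The plan is to dispatch the three assertions in turn, handling the elementary parts by hand and invoking the structure theory of affine reflection groups (\cite{bour}) and the analysis of foldable figures (\cite{hw}) for the two global passages; all three reductions turn on the same point, which I single out at the end.

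\textbf{Part (1).} I would first observe that $\phi$ is a homomorphism into $\mathcal{O}_n$ by a one-line computation: writing $g(x)=A_g x+b_g$ with $A_g$ orthogonal one gets $\phi(g)=A_g$, and orthogonality of $A_{gh}=A_gA_h$ is clear, so $\phi(\mathcal{G})\subset\mathcal{O}_n$; that $\ker\phi$ is exactly the set of translations is immediate from $A_g=\mathrm{id}\iff g$ a translation. For the substantive claim, set $W_0:=\phi(\mathcal{G})$ and $T:=\ker\phi$. Since $\mathcal{G}$ is discrete, $W_0$ is a finite subgroup of the compact group $\mathcal{O}_n$ and $T$ is a discrete subgroup of $\R^n$, i.e.\ a lattice spanning a subspace $L$; because $T\triangleleft\mathcal{G}$, the group $W_0$ preserves $L$ and $L^\perp$, and $\mathcal{G}/T\cong W_0$. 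The crux is $L=\R^n$. I would decompose $\E^n$ into $W_0$-isotypic subspaces: essentiality ($\mathrm{Fix}(W_0)=\{0\}$) forbids a trivial summand, and on a nontrivial summand the fixed-point-freeness of $\mathcal{G}$ rules out the possibility that the induced quotient action is by a finite group (which would fix the barycenter of an orbit); a short case analysis, namely the one in \cite{bour}, then shows $T$ meets each summand in a full-rank sublattice, so $T$ has rank $n$. Granting this, the finite-index inclusion $T\le\mathcal{G}$ gives $\mathrm{vol}(\E^n/\mathcal{G})=\mathrm{covol}(T)/|W_0|<\infty$, while the closure $\overline{C}$ of a connected component of $\E^n\setminus\bigcup\{H:r_H\in\mathcal{G}\}$ is a fundamental domain and a full-dimensional convex polyhedron. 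A full-dimensional convex polyhedron of finite volume has trivial recession cone, hence is bounded; therefore $\overline{C}$ is compact.

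\textbf{Part (2).} Let $F$ be foldable, with cutting hyperplanes $\mathcal{S}$ and pieces $F_1,\dots,F_m$, and let $\mathcal{G}$ be the group generated by the reflections in the hyperplanes supporting the facets of $F$. I would run a developing-map (Poincar\'{e}-polyhedron) argument: the folding condition says that reflecting a piece across a cutting hyperplane bounding it lands on another piece, and iterating the subdivision forces the dihedral angles of $F$ to be submultiples of $\pi$; hence reflecting $F$ across one of its facets produces a congruent copy meeting $F$ along that facet, the copies produced around every codimension-two face close up consistently, and the $\mathcal{G}$-orbit of $F$ tiles $\E^n$ with pairwise-disjoint interiors. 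Thus $\mathcal{G}$ is discrete with $F$ as a fundamental domain. Since $F$ is compact, $\mathcal{G}$ is essential (a nonzero $W_0$-fixed vector would give a direction along which any fundamental domain is unbounded) and without fixed points (a global fixed point would make $\E^n$ a finite union of congruent copies of the compact $F$). So $\mathcal{G}$ is an essential, fixed-point-free discrete reflection group, and by the Bourbaki structure theorem together with the classification of affine Weyl groups recalled above, $\mathcal{G}=\widetilde{\mathcal{W}}$ for a (possibly reducible) root system $R$, with $F$ a fundamental alcove.

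\textbf{Part (3).} The bijection is assembled from the first two parts. The map $F\mapsto\mathcal{G}(F)$ (reflections in the facet-hyperplanes of $F$) sends foldable figures to essential, fixed-point-free reflection groups by Part (2); it is injective up to congruence, since $F$ is recovered from $\mathcal{G}(F)$ as a fundamental alcove and two figures with the same group are both alcoves of it. For surjectivity, given an essential, fixed-point-free reflection group $\mathcal{G}$, Part (1) and the classification identify it with an affine Weyl group whose fundamental domain $C$ is a compact convex simplex (a product of simplices in the reducible case); the nontrivial point, carried out in \cite{hw}, is that such an alcove carries a reflective self-similar subdivision and is therefore foldable (for a rank-one factor one bisects an interval; for the irreducible rank-two alcoves one uses the standard dissections of the $A_2$, $B_2$, $G_2$ triangles into congruent similar sub-triangles that fold onto one another, and analogously in higher rank), and that $\mathcal{G}(C)=\mathcal{G}$. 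Since these two assignments are mutually inverse, the correspondence follows.

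The main obstacle, common to Parts (1)--(3), is the passage from local data (a finite set of reflections, respectively a single subdivided region) to the global statement that the orbit of the fundamental region tiles $\E^n$ --- equivalently, that the translation subgroup has full rank. This is exactly where soft arguments stop and one must appeal to Bourbaki's classification of affine reflection groups and to the analysis of foldable figures in \cite{hw}; the remainder is bookkeeping.
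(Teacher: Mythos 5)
The paper offers no proof of this theorem at all — it simply refers the reader to \cite{bour,hw} — and your sketch is a correct expansion of exactly that strategy, deferring the same key points (finiteness of the point group, full rank of the translation lattice $T$, the globalization of the local folding condition via a developing-map argument, and the self-similar subdivision of an alcove under scaling by $\varkappa$) to those two references. One caution: your claim that discreteness of $\mathcal{G}$ alone forces $\phi(\mathcal{G})$ to be finite is false for general discrete isometry groups (a screw motion with irrational angle generates a discrete group with non-discrete image in $\mathcal{O}_3$); it does hold for discrete \emph{reflection} groups, but that fact is itself part of the Bourbaki structure theory you are invoking, not an elementary consequence of discreteness.
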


Note that in view of this theorem, our construction of affine fractal hypersurfaces took place over a foldable figure, namely $\triangle$ together with its collection of subfigures $\{\triangle_i\st i\in \N_N\}$. As the foldable figure and hence every subfigure of it is the fundamental domain of an associated affine Weyl group, the existence of a labelling map $\ell$ is guaranteed. In addition -- and this will become important in the next section -- we also constructed an orthonormal basis on this foldable figure consisting of a finite number of affine basis fractal hypersurfaces. 

In the next section it will become necessary to consider dilates of the fundamental domain of an affine Weyl group, i.e., a foldable figure. To this end, let $F\subset\mathbb{R}^n$ be a foldable figure with $0\in\mathbb{R}^n$ as one of its vertices. Denote by $\mathcal{H}$ be the set of
hyperplanes associated with $F$ and by $\Sigma$ be the tessellation of $F$ induced by $\mathcal{H}$. The affine Weyl group of the foldable figure
$F$, $\widetilde{\mathcal{W}}$, is then the group generated by the affine reflections $r_H$, where $H\in \mathcal{H}$. 

Now, fix $1 < \varkappa\in\mathbb{N}$ and define $\triangle := \varkappa F$. Then $\triangle$ is also a foldable figure, whose $N := \varkappa^n$ subfigures $\triangle_i\in\Sigma$. Assume w.l.o.g that $\triangle_1 = F$. The tessellation and set of hyperplanes for $\triangle$ are $\varkappa \Sigma$ and $\varkappa\mathcal{H}$, respectively. Moreover, the affine reflection group generated by $\varkappa\mathcal{H}$ is an isomorphic subgroup of $\widetilde{\mathcal{W}}$. Note that the similarity ratio $a = 1/\varkappa$. By simple transitivity of $\widetilde{\mathcal{W}}$, define
similitudes $u_i:\triangle\to\triangle_i$ by:
\[
u_1 := (1/\varkappa)(\cdot)\quad\text{and}\quad\forall j = 2,\ldots, N:\,\quad u_j := r_{j,1}\circ u_1.
\]
Now the construction proceeds as in Section \ref{sec4}. This yields a fractal function $\ff$ defined over $\triangle$ and an orthonormal basis of the form \eqref{444} whose elements are defined over the foldable figure $\varkappa F$.

Given a fractal function $\ff$ defined over a foldable figure $F$, we can extend $\ff$ to all of $\mathbb{R}^n$ by using the fact that the foldable figure $F$ is a fundamental domain for its associated affine Weyl group and that it tessellates $\mathbb{R}^n$ by reflections in its bounding hyperplanes, i.e., under the action of $\widetilde{\mathcal{W}}$. For any foldable figure $F'\in \Sigma$ there exists a unique $r\in \wW$ so that $F' = r(F)$. Define an extension $\overline{\ff}:\R^n\to\R$ of $\ff$ by
\[
\overline{\ff}\vert_{F'} := \ff\circ r^{-1}.
\]
Similarly, we define an orthonormal affine fractal basis $\fB'$ over $F'$ by setting $\fB' := \{\fb\circ r^{-1}\st \fb\in \fB\}$. Hence, we obtain 
$$ 
\bigcup_{r\in \wW} \{\fb\circ r^{-1}\st \fb\in \fB\}
$$
as an orthonormal affine fractal basis for $L^2(\R^n)$.


%
\section{Wavelet Sets}\label{sec6}
In this section, we establish a connection between the various concepts introduced in the previous sections and the multiscale structure of wavelets. Our emphasis will be entirely on wavelet sets in $\R^n$ and most of the material found here is taken from \cite{dai1,dai2,dai3,lm}. The reader interested in a more operator-theoretic formulation of wavelet sets is referred to \cite{dai2}. For motivational purposes, we present first the one-dimensional setting, i.e., wavelet sets on $\R$, and then proceed to the higher-dimensional scenario.

To this end, we need to first define wavelets as follows.
\begin{definition}
A \textbf{dyadic orthonormal wavelet} on $\R$ is a unit vector $\psi \in L^2(\mathbb{R}, m)$, where $m$ denotes Lebesgue measure, with the property that the set
\begin{equation}\label{psi}
\{2^{\frac{n}{2}}\psi(2^n t - \ell) \st n,\ell \in \mathbb{Z}\}
\end{equation}
of all integral translates of $\psi$ followed by dilations by arbitrary integral powers of $2$, is an orthonormal basis for $L^2(\mathbb{R},m)$.
\end{definition}
\noindent
We remark that this is not the most general definition of wavelet but for our purposes it is sufficient.

For later developments, we require the following two operators. Let $T$ denote the unitary translation and $D$ the unitary dilation operator in $B(L^2(\mathbb{R}))$, the Banach space of bounded linear operators from $L^2(\mathbb{R})$ to itself, defined by 
$$
(Tf)(t) := f(t-1)\quad\text{and}\quad (Df)(t) := \sqrt{2}f(2t), \quad f\in L^2(\R), \;t\in \R.
$$
Then, we may write \eqref{psi} more succinctly as
$$
2^{\frac{n}{2}}\psi(2^nt - \ell) = (D^nT^\ell \psi)(t)
$$
for all $n,\ell \in \mathbb{Z}$, $t\in \R$. Note that $TD = DT^2$.
 
Next, we introduce the Fourier transform on $\R$. For $f,g\in L^1(\mathbb{R}) \cap L^2(\mathbb{R})$, let
\begin{equation}\label{eq18}
(\mathscr{F}f)(s) := \frac1{\sqrt{2\pi}} \int_{\mathbb{R}} e^{-ist}
f(t)dt =: \widehat f(s),
\end{equation}
and
\begin{equation}\label{eq19}
(\mathscr{F}^{-1}g)(t) = \frac1{\sqrt{2\pi}} \int_{\mathbb{R}} e^{ist}g(s)ds = \widehat{g}(-s) =: g^\vee (t).
\end{equation}
\nl
Note that this particular form of the Fourier-Plancherel transform defines a unitary operator on $L^2(\R)$. In particular, we remark that
\[
(\mathscr{F} T)(f)(s) := \frac{1}{\sqrt{2\pi}} \int_\R e^{-ist} f(t-1)dt = e^{-is} (\mathscr{F}f)(s) =: e^{-is} \widehat{f}(s).
\]
Hence, $\sF T \sF^{-1} f = e^{-is} f =: M_{e^{-is}} f$. If we define $\widehat{T} := \sF T \sF^{-1}$ then $\widehat{T} = M_{e^{-is}}$.
Similarly, we obtain for any $n\in \Z$
\begin{align*}
(\mathscr{F}D^nf)(s) &= \frac1{\sqrt{2\pi}} \int_{\mathbb{R}} e^{-ist} (\sqrt{2})^n f(2^nt)dt = (\sqrt 2)^{-n}\,\frac1{\sqrt{2\pi}} \int_{\mathbb{R}}e^{-i2^{-n}st} f(t)dt\\
&= (\sqrt 2)^{-n} (\mathscr{F}f)({2^{-n}s}) =
(D^{-n}\mathscr{F}f)(s).
\end{align*}
As above, if we set $\widehat{D^n} := \sF D^n \sF^{-1}$ then $\widehat{D^n} = D^{-n}$ and therefore $\widehat{D} = D^{-1}$.

Wavelet sets belong to the theory of wavelets via the Fourier transform. 
\begin{definition}
A \textbf{wavelet set} in $\mathbb{R}$ is a measurable subset $E$ of $\mathbb{R}$ for which
$\frac1{\sqrt{2\pi}} \chi_E$ is the Fourier transform of a wavelet. 
\end{definition}
\noindent
The wavelet $\widehat\psi_E := \frac1{\sqrt{2\pi}}\chi_E$ is sometimes called \textbf{$s$--elementary}. The class of wavelet sets was also investigated in \cite{HWW,FW}. In their theory the corresponding wavelets are called MSF (minimally supported frequency) wavelets.

\begin{example}
The prototype of a wavelet set is the Shannon set given by
\begin{equation}\label{eq33}
E_S = [-2\pi, -\pi) \cup [\pi,2\pi).
\end{equation}
The orthonormal wavelet for $E_S$ is then given by $\psi_S (t) = 2 \sinc (2t -1) - \sinc t$.
\begin{figure}[h!]
\begin{center}
\includegraphics[width = 6cm, height = 3cm]{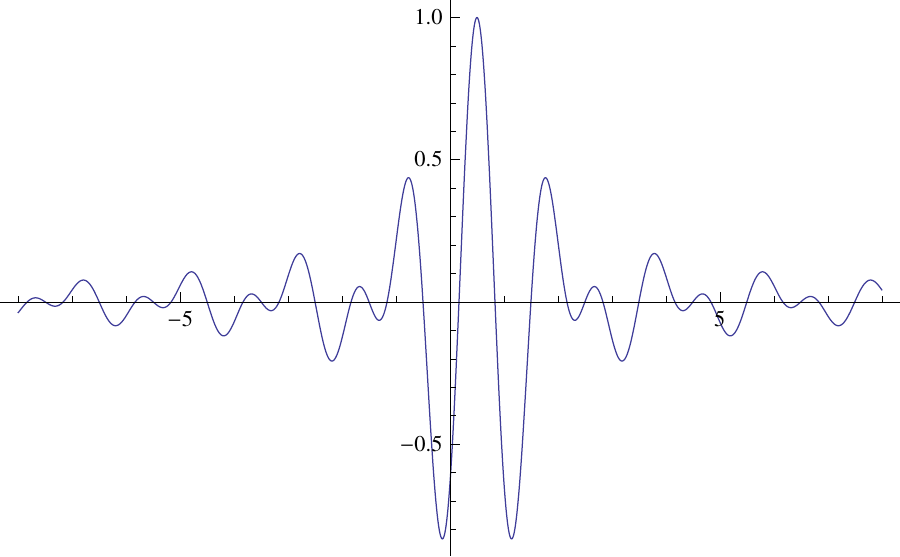}
\end{center}
\caption{The Shannon wavelet.}
\end{figure}

To prove that $\psi_S$ is indeed an orthonormal wavelet, note that the set of exponentials
\[
\{e^{i\ell s}\st \ell\in \mathbb{Z}\}
\]
restricted to $[0,2\pi]$ and normalized by $\frac1{\sqrt{2\pi}}$ is an orthonormal basis for $L^2[0,2\pi]$. Write $E_S = E_-\cup E_+$ where $E_- = [-2\pi, -\pi)$, $E_+ = [\pi,2\pi)$. Since $\{E_- +2\pi, E_+\}$ is a partition of $[0,2\pi)$ and since the exponentials $e^{i\ell s}$ are invariant under translation by $2\pi$, it follows that
\begin{equation*}
\left\{\frac{e^{i\ell s}}{\sqrt{2\pi}}\Big|_{E_S}\st \ell\in
\mathbb{Z}\right\}
\end{equation*}
is an orthonormal basis for $L^2(E_S)$. Since $\widehat T = M_{e^{-is}}$, this set can be written as
\begin{equation*}
\{\widehat T^\ell \widehat\psi_S\st \ \ell\in \mathbb{Z}\}.
\end{equation*}
Next, note that any ``dyadic interval'' of the form $J = [b,2b)$, for some $b>0$ has the property that $\{2^nJ\st n\in\mathbb{Z}\}$, is a partition of $(0,\infty)$. Similarly, any set of the form
\begin{equation*}
\mathcal{K} = [-2a,-a)\cup [b,2b)
\end{equation*}
for $a,b>0$, has the property that
\[
\{2^n\mathcal{K}\st \ n\in \mathbb{Z}\}
\]
is a partition of $\mathbb{R}\backslash\{0\}$. 

To complete the proof, we need to introduce one more item.
\begin{definition}
Let $U$ be a given unitary operator on a Hilbert space $\mathcal{H}$. A nonempty subspace $\mathcal{K}$ of $\mathcal{H}$ is called a \textbf{wandering subspace} for $U$ if $U^m (\mathcal{K}) \bot U^n (\mathcal{K})$, for all $m\neq n\in \N$. If, in addition, $\mathcal{H} = \bigoplus_{n\in\Z} U^n (\mathcal{K})$, then we say that $\mathcal{K}$ is a \textbf{complete wandering subspace} of $\mathcal{H}$ for $U$. 
\end{definition}
It follows that the space $L^2(\mathcal{K})$, considered as a subspace of $L^2(\mathbb{R})$, is a complete wandering subspace for the dilation unitary $(Df)(s) = \sqrt 2\ f(2s)$. For each $n\in \mathbb{Z}$,
\begin{equation*}
D^n(L^2(\mathcal{K})) = L^2(2^{-n}\mathcal{K}).
\end{equation*}
So $\displaystyle{\bigoplus_{n\in\Z}} D^n(L^2(\mathcal{K}))$ is a direct sum decomposition of $L^2(\mathbb{R})$. In particular $E_S$ has this property. Thus,
\begin{equation*}
D^n\left\{\frac{e^{i\ell s}}{\sqrt{2\pi}}\Big|_{E_S}\st \ \ell\in\mathbb{Z}\right\} = \left\{\frac{e^{2^ni\ell s}}{\sqrt{2\pi}}\Big|_{2^{-n}E_S} \st \ \ell\in
\mathbb{Z}\right\}
\end{equation*}
basis for $L^2(2^{-n}E_S)$ for each $n$. Hence
\[
\{D^n\widehat T^\ell \widehat\psi_S\st \ n,\ell\in \mathbb{Z}\} = \{\widehat{D^n}\widehat T^\ell \widehat\psi_S\st \ n,\ell\in \mathbb{Z}\}
\]
is an orthonormal basis for $L^2(\mathbb{R})$, as required.
\end{example}

It follows from the above arguments that {\em sufficient\/} conditions for $E$ to be a wavelet set are:
\begin{itemize}
\item[(W1)] The set of normalized exponentials $\left\{\frac1{\sqrt{2\pi}} e^{i\ell s} \st \ell\in \mathbb{Z}\right\}$, when restricted to $E$, constitutes an orthonormal basis for $L^2(E)$.
\item[(W2)] The family $\{2^nE\st n\in\mathbb{Z}\}$ of dilates of $E$ by integral
powers of 2 should constitute a measurable partition (i.e., a partition modulo
null sets) of $\mathbb{R}$.
\end{itemize}

These observations now motivate the next definitions.
\begin{definition}
Two measurable sets $E,F \subseteq\R$ are called \textbf{translation congruent modulo\/}
$2\pi$
\begin{itemize}
\item if there exists a measurable bijection $\phi : E\to F$ such that
$\phi(s)-s = n(s)\, 2\pi$, for each $s\in E$, and a unique $n(s)\in\Z$,
\end{itemize}
or, equivalently,
\begin{itemize}
\item if there is a measurable partition $\{E_n\st n\in
\mathbb{Z}\}$ of $E$ such that
\begin{equation*}\label{eq39}
\{E_n  + n\,2\pi\st \ n\in \mathbb{Z}\}
\end{equation*}
is a measurable partition of $F$. 
\end{itemize}
Two measurable sets $G, H\subseteq \R$ are called \textbf{dilation congruent modulo\/} 2
\begin{itemize} 
\item if there is a measurable bijection $\tau : G\to H$ such that for each $s\in G$ there exists an $n = n(s)\in\Z$ with $\tau(s) = 2^ns$,
\end{itemize}
or equivalently,
\begin{itemize}
\item if there is a
measurable partition $\{G_n\st n\in \Z\}$ of $G$ such that
\begin{equation*}\label{eq40}
\{2^nG_n\st n\in \Z\}
\end{equation*}
is a measurable partition of $H$.
\end{itemize}
\end{definition}

\begin{example}: 
$E := [0, 2\pi]$ and $F:= [-2\pi, -\pi] \cup [\pi, 2\pi]$.
\end{example}

The following lemma is proved in \cite{dai1}.
\begin{lemma}
Let $f\in L^2(\mathbb{R})$, and let $E = \text{\rm supp}(f)$. Then $f$ has the
property that
\[
\{e^{i\ell s}f\st \ell\in \mathbb{Z}\}
\]
is an orthonormal basis for $L^2(E)$ if and only if
\begin{itemize}
\item[(i)] $E$ is congruent to $[0,2\pi)$ modulo $2\pi$, and
\item[(ii)] $|f(s)| = \frac1{\sqrt{2\pi}}$ a.e.\ on $E$.
\end{itemize}
\end{lemma}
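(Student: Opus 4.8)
The plan is to prove both directions by reducing everything to the standard fact about exponentials on intervals of length $2\pi$. First I would set up notation: write $E = \mathrm{supp}(f)$ and recall that on $[0,2\pi)$ the normalized exponentials $\{\frac1{\sqrt{2\pi}}e^{i\ell s}\st \ell\in\Z\}$ form an orthonormal basis for $L^2[0,2\pi)$, and that multiplication by $e^{i\ell s}$ is $2\pi$-periodic, so the same family restricted to any set translation-congruent to $[0,2\pi)$ modulo $2\pi$ is an orthonormal basis for the $L^2$ of that set. The strategy is to transport the problem to $L^2[0,2\pi)$ via the congruence bijection.

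For the ``if'' direction, assume (i) and (ii). Let $\phi\colon E\to[0,2\pi)$ be the measurable bijection with $\phi(s)-s\in 2\pi\Z$ given by (i). Consider the map that sends $g\in L^2(E)$ to the function on $[0,2\pi)$ obtained by pushing forward along $\phi$; since $\phi$ is a piecewise-translation by multiples of $2\pi$, it is measure-preserving, so this is a unitary $V\colon L^2(E)\to L^2[0,2\pi)$, and crucially $V$ intertwines multiplication by $e^{i\ell s}$ on both sides because $e^{i\ell s}$ is unchanged under translation by $2\pi k$. By (ii), $|f|\equiv \frac1{\sqrt{2\pi}}$ on $E$, so $\{e^{i\ell s}f\st\ell\in\Z\}$ has the same Gram matrix (hence the same closed span and orthonormality structure) as $\{e^{i\ell s}\cdot\frac1{\sqrt{2\pi}}\chi_E\st\ell\in\Z\}$; more precisely, writing $f = \frac1{\sqrt{2\pi}}\,\omega\,\chi_E$ with $|\omega|=1$, multiplication by $\omega$ is a unitary of $L^2(E)$ that carries $\{e^{i\ell s}\cdot\frac1{\sqrt{2\pi}}\chi_E\}$ to $\{e^{i\ell s}f\}$, so one is an orthonormal basis iff the other is. Combining with $V$, the family $\{e^{i\ell s}f\}$ is an orthonormal basis of $L^2(E)$ iff $\{\frac1{\sqrt{2\pi}}e^{i\ell s}\}$ is an orthonormal basis of $L^2[0,2\pi)$, which is the classical fact.

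For the ``only if'' direction, suppose $\{e^{i\ell s}f\st\ell\in\Z\}$ is an orthonormal basis for $L^2(E)$ with $E=\mathrm{supp}(f)$. Orthonormality for $\ell=0$ gives $\int_E|f(s)|^2\,ds=1$; more usefully, orthonormality of the whole family says $\int_E |f(s)|^2 e^{i\ell s}\,ds = \delta_{\ell,0}\cdot 2\pi \cdot \frac1{2\pi}$, i.e.\ $\widehat{|f|^2\chi_E}$ (as a function with Fourier coefficients indexed by $\ell$, after suitably interpreting the integral) has all its ``$2\pi$-periodized'' Fourier coefficients equal to those of the constant $\frac1{2\pi}$. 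Concretely: let $g(s) := \sum_{k\in\Z} |f(s+2\pi k)|^2$ be the $2\pi$-periodization of $|f|^2$ (a nonnegative $L^1_{\mathrm{loc}}$, $2\pi$-periodic function, finite a.e.); then $\int_0^{2\pi} g(s)e^{i\ell s}\,ds = \int_\R |f(s)|^2 e^{i\ell s}\,ds = \delta_{\ell,0}$ for all $\ell$, so $g\equiv \frac1{2\pi}$ a.e. This forces, for a.e.\ $s$, that $\sum_k |f(s+2\pi k)|^2 = \frac1{2\pi}$. Next I would argue that in fact at most one $k$ contributes: use that the family is not merely orthonormal but \emph{complete} in $L^2(E)$. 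Completeness means any $h\in L^2(E)$ with $\langle h, e^{i\ell s}f\rangle = 0$ for all $\ell$ must vanish; testing with $h$ supported on a portion of $E$ lying over a fixed fundamental-domain point shows that if the periodization sum had two or more nonzero terms one could build a nonzero $h$ orthogonal to every $e^{i\ell s}f$ (the exponentials cannot separate two points of $E$ differing by a multiple of $2\pi$). Hence for a.e.\ point of $[0,2\pi)$ there is exactly one $k$ with $s+2\pi k\in E$, which is precisely the statement that $E$ is translation-congruent to $[0,2\pi)$ modulo $2\pi$, giving (i); and then the single surviving term in $\sum_k|f(s+2\pi k)|^2=\frac1{2\pi}$ gives $|f|^2 = \frac1{2\pi}$ a.e.\ on $E$, which is (ii).

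The main obstacle is the completeness half of the ``only if'' direction: orthonormality alone yields the periodization identity $\sum_k |f(s+2\pi k)|^2 = \frac1{2\pi}$ but does not by itself rule out $E$ overlapping itself modulo $2\pi$ (e.g.\ $|f|^2$ split as $\frac1{4\pi}+\frac1{4\pi}$ over two congruent copies), so one genuinely needs to invoke that $\{e^{i\ell s}f\}$ spans all of $L^2(E)$, and the clean way to do this is to note that the closed span of $\{e^{i\ell s}f\}$ consists exactly of functions of the form $m(s)f(s)$ with $m$ a $2\pi$-periodic multiplier in an appropriate sense; such functions take equal (up to the fixed phase/modulus of $f$) values at points of $E$ congruent modulo $2\pi$, so they cannot exhaust $L^2(E)$ unless $E$ meets each coset at most once. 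I would phrase this last point carefully since it is where the argument has real content; the rest is bookkeeping with the unitary $V$ and the phase unitary, which I would state without belaboring.
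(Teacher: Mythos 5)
The paper gives no proof of this lemma---it simply cites Dai and Larson \cite{dai1}---so there is no in-paper argument to compare against. Your proposal is correct and is essentially the standard argument: the ``if'' direction by transporting along the measure-preserving congruence bijection and the unimodular phase unitary, and the ``only if'' direction by showing orthonormality forces the $2\pi$-periodization of $|f|^2$ to be constant $\tfrac{1}{2\pi}$ while completeness rules out $E$ meeting a coset of $2\pi\Z$ more than once (your sketched construction of a nonzero $h$ orthogonal to every $e^{i\ell s}f$ when two congruent pieces overlap does go through, e.g.\ by weighting the two pieces by $|f|^2$ evaluated on the opposite piece with opposite signs). You correctly identify that orthonormality alone does not suffice for (i), which is the one genuinely delicate point.
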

Observe that if $E$ is $2\pi$--translation congruent to $[0,2\pi)$, then since
\[
\{[0,2\pi) + 2\pi n\st n\in \mathbb{Z}\}
\]
is a measurable partition of $\mathbb{R}$, so is
\[
\{E + 2\pi n\st n\in \mathbb{Z}\}.
\]
Similarly, if $F$ is  $2$--dilation congruent to the Shannon wavelet set $E_S = [-2\pi, -\pi) \cup [\pi, 2\pi)$, then since $\{2^nE_S : n \in \mathbb{Z}\}$ is a measurable partition of $\mathbb{R}$, so is $\{2^n F : n \in \mathbb{Z}\}$.

We say that a measurable subset $G\subseteq \mathbb{R}$ is a 2--\textbf{dilation
generator\/} of a {\em partition\/} of $\mathbb{R}$ if the sets
\begin{equation*}\label{eq41}
2^nG := \{2^ns\st s\in G\},\qquad n\in \mathbb{Z}
\end{equation*}
are disjoint and $\mathbb{R}\setminus \displaystyle{\bigcup_{n\in \Z}} 2^nG$ is a null set. 
\nl
Analogously, we say
that $E\subseteq \mathbb{R}$ is a $2\pi$--\textbf{translation generator} of a {\em 
partition\/} of $\mathbb{R}$ if the sets
\begin{equation*}\label{eq42}
E + 2n\pi := \{s + 2 n\pi\st s\in E\},\qquad n\in \mathbb{Z},
\end{equation*}
are disjoint and $\mathbb{R}\setminus \displaystyle{\bigcup_{n\in \Z}} (E+2n\pi)$ is a null set.

The next theorem gives necessary and sufficient conditions for a measurable set $E\subseteq\R$ to be a wavelet set. Before we state it, we need a definition.

\begin{definition}
A \textbf{fundamental domain} for a group of (measurable) transformations $\mathcal{G}$ on a measure space $(\Sigma,\mu)$ is a measurable set $C$ with the property that $\{g(C) : g\in\mathcal{G}\}$ is a measurable partition (tessellation) of $\Sigma$; that is, $\Sigma \setminus \left(\displaystyle{\bigcup_{g\in\mathcal{G}}} g(C)\right)$ is a $\mu$-null set and $g_1 (C) \cap g_2 (C)$ is a $\mu$-null set for $g_1\neq g_2$.
\end{definition}

\begin{theorem}
Let $E\subseteq\mathbb{R}$ be a measurable set. Then $E$ is a wavelet set if and only if one of the following equivalent conditions holds. 
\begin{itemize}
\item[(i)] $E$ is both a $2$--dilation generator of a partition (modulo null sets) of $\mathbb{R}$ and a $2\pi$--translation generator of a partition (modulo null sets) of $\mathbb{R}$.
\item[(ii)] $E$ is both translation congruent to $[0,2\pi)$ mod $2\pi$ and dilation congruent to $[-2\pi,-\pi) \cup [\pi,2\pi)$ mod $2$.
\item[(iii)] $E$ is a fundamental domain for the dilation group $\langle D^n \st n\in \Z\rangle$ and at the same time a fundamental domain for the translation group $\langle T^k_{2\pi} \st k\in \Z\rangle$. Here $T_{2\pi}$ is translation by $2\pi$ along the real axis.)
\end{itemize}
\end{theorem}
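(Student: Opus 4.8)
The plan is to establish the chain of equivalences (i) $\Leftrightarrow$ (ii) $\Leftrightarrow$ (iii), and then to show that (i) (together with the lemma already proved) is equivalent to $E$ being a wavelet set. The heart of the matter is really a translation between three languages for the same two partition-of-$\R$ phenomena: the dilation side and the translation side. I would treat those two sides in complete parallel, so the work is essentially done once.

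First I would handle the translation side. I would observe that $E$ is a $2\pi$-translation generator of a partition of $\R$ (in the sense defined just before the theorem) if and only if the map sending $s\in E$ to its unique representative in $[0,2\pi)$ modulo $2\pi$ is a well-defined measurable bijection $E\to[0,2\pi)$ modulo null sets, which is exactly the statement that $E$ is translation congruent to $[0,2\pi)$ mod $2\pi$; and this in turn says precisely that $\{g(C):g\in\langle T_{2\pi}\rangle\}$ tessellates $\R$ with $C=E$, i.e. that $E$ is a fundamental domain for $\langle T_{2\pi}^k : k\in\Z\rangle$. The passage between the ``bijection $\phi$'' and the ``measurable partition $\{E_n\}$'' formulations is the content already recorded in the definition of translation congruence, so I would just cite it. Then I would repeat the identical argument on the dilation side, using the observation in the excerpt that $\{2^n E_S : n\in\Z\}$ partitions $\R\setminus\{0\}$ for the Shannon set $E_S=[-2\pi,-\pi)\cup[\pi,2\pi)$: thus $E$ is a $2$-dilation generator of a partition of $\R$ iff $E$ is dilation congruent mod $2$ to $E_S$ iff $E$ is a fundamental domain for $\langle D^n : n\in\Z\rangle$. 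Conjunction of the two sides gives (i) $\Leftrightarrow$ (ii) $\Leftrightarrow$ (iii).

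Next I would connect (i) to the wavelet-set property. Given $E$ satisfying (i), set $\widehat\psi_E := \frac1{\sqrt{2\pi}}\chi_E$, which is a unit vector in $L^2(\R)$ since $\{E+2\pi n\}$ partitions $\R$ forces $|E|=2\pi$. Condition (W1) — that $\{\frac1{\sqrt{2\pi}} e^{i\ell s}|_E : \ell\in\Z\}$ is an orthonormal basis of $L^2(E)$ — follows from the Lemma attributed to \cite{dai1}, whose hypotheses (i) $E$ congruent to $[0,2\pi)$ mod $2\pi$, and (ii) $|\widehat\psi_E|\equiv \frac1{\sqrt{2\pi}}$ on $E$, are exactly what we have. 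Condition (W2) — that $\{2^n E : n\in\Z\}$ is a measurable partition of $\R$ — is the dilation half of (i). Now I would run the same argument as in the Shannon example: $L^2(E)$ is a complete wandering subspace for $D$ because $\{2^{-n}E : n\in\Z\}$ partitions $\R$, so $\bigoplus_{n\in\Z} D^n L^2(E) = L^2(\R)$; applying $\widehat{D^n}=D^{-n}$ to the orthonormal basis $\{\widehat T^\ell\widehat\psi_E\}$ of $L^2(E)$ and taking the direct sum yields that $\{\widehat{D^n}\widehat T^\ell\widehat\psi_E : n,\ell\in\Z\}$ is an orthonormal basis of $L^2(\R)$. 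Pulling back through the unitary $\sF^{-1}$ (and using $TD=DT^2$, $\widehat T = M_{e^{-is}}$, $\widehat D = D^{-1}$ recorded above) shows $\{D^nT^\ell\psi_E : n,\ell\in\Z\}$ is an orthonormal basis of $L^2(\R)$, i.e. $\psi_E=\sF^{-1}\widehat\psi_E$ is a dyadic orthonormal wavelet and $E$ a wavelet set. Conversely, if $E$ is a wavelet set, then $\widehat\psi_E=\frac1{\sqrt{2\pi}}\chi_E$ is the Fourier transform of a wavelet; orthonormality of $\{\widehat T^\ell\widehat\psi_E\}$ within $L^2(E)$ together with the Lemma forces $E$ to be $2\pi$-translation congruent to $[0,2\pi)$, hence a $2\pi$-translation generator, and the fact that $\{\widehat{D^n}\}$-translates of this basis exhaust $L^2(\R)$ with the pieces living on the $L^2(2^{-n}E)$ forces $\{2^nE : n\in\Z\}$ to be a measurable partition of $\R$; that is (i).

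I expect the main obstacle to be the converse direction of the last step: extracting from the bare statement ``$\{D^nT^\ell\psi_E\}$ is an orthonormal basis'' the two clean partition statements in (i). The forward implications and all the equivalences among (i), (ii), (iii) are essentially bookkeeping with congruences. For the converse one must argue that orthogonality of $D^n L^2(E)$ and $D^m L^2(E)$ for $n\neq m$ — which is what orthogonality of the corresponding families of basis vectors gives, after noting each $\widehat T^\ell\widehat\psi_E$ is supported in $E$ — forces $2^{-n}E\cap 2^{-m}E$ to be null, and that completeness forces $\bigcup_n 2^{-n}E$ to be co-null; and dually on the translation side via the Lemma. This is where one has to be careful that ``orthonormal basis'' is being used with its full strength (both orthonormality and completeness) and that the supports behave as claimed; I would isolate it as the one genuinely non-formal point and spell it out, leaving the rest as routine translation between the equivalent formulations already set up in the definitions.
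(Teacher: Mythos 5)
The paper itself offers no proof of this theorem; it simply refers the reader to Dai--Larson \cite{dai1}. Your proposal reconstructs that argument from the ingredients the section has already assembled (the congruence definitions, the Lemma quoted from \cite{dai1}, and the wandering-subspace computation carried out for the Shannon set), and its architecture is the right one: the equivalences (i) $\Leftrightarrow$ (ii) $\Leftrightarrow$ (iii) really are bookkeeping between three formulations of the same two partition statements, the implication from (i) to the wavelet-set property is exactly the Shannon argument run in general, and the converse is correctly identified as the only nontrivial step.

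There is, however, one genuine soft spot in that converse. You propose to obtain translation congruence of $E$ to $[0,2\pi)$ from ``orthonormality of $\{\widehat T^\ell\widehat\psi_E\}$ within $L^2(E)$ together with the Lemma.'' But the Lemma's hypothesis is that this family is an orthonormal \emph{basis} of $L^2(E)$, and completeness of the $n=0$ slice inside $L^2(E)$ does not follow immediately from the global basis property of $\{D^n\widehat T^\ell\widehat\psi_E\}$ in $L^2(\R)$: the terms with $n\neq 0$ are supported on the dilates $2^{-n}E$, and you cannot yet assume those are disjoint from $E$ without circularity. The standard repair is a periodization computation that uses orthonormality alone: with $\widehat\psi_E=\tfrac1{\sqrt{2\pi}}\chi_E$, orthonormality of the exponentials on $E$ says $\int_E e^{-iks}\,ds=2\pi\delta_{k0}$, i.e.\ the $2\pi$-periodic function $h(s):=\sum_{n\in\Z}\chi_E(s+2\pi n)$ has Fourier coefficients $\delta_{k0}$ and hence $h\equiv 1$ a.e., which is precisely the statement that $\{E+2\pi n\st n\in\Z\}$ is a measurable partition of $\R$. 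The ``if'' direction of the Lemma then upgrades the exponentials to an orthonormal basis of $L^2(E)$, after which your dilation-side argument (orthogonality of the subspaces $L^2(2^{-n}E)$ forces the sets $2^{-n}E$ to be pairwise disjoint mod null sets, completeness forces their union to be co-null) goes through exactly as you wrote it. With that single substitution your proof is complete and is, in substance, the proof of \cite{dai1} that the paper declines to reproduce.
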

\begin{proof}
See \cite{dai1}.
\end{proof}

Now we like to extend the above concepts and definitions to $\R^n$. We will do this in a slightly more general setting. To this end, let $\X$ be a metric space and $\mu$ a $\sigma$-finite non-atomic Borel measure on $\X$ for which the measure of every open set is positive and for which bounded sets have finite measure. Let $\mathcal{T}$ and $\mathcal{D}$ be countable groups of homeomorphisms of $\X$ that map bounded sets to bounded sets and which are absolutely continuously in the sense that they map $\mu$-null sets to $\mu$-null sets. Furthermore, let $\mathcal{G}$ be a countable group of absolutely continuous Borel measurable bijections of $\X$. Denote by $\mathcal{B}$ the family of Borel sets of $\X$.

The following definition completely generalizes the definitions of $2\pi$--translation congruence and $2$--dilation congruence given above.
\begin{definition}
Let $E, F\in\mathcal{B}$. We call $E$ and $F$ {\em $\mathcal{G}$--congruent} and write $E \sim_{\mathcal{G}} F$, if there exist measurable partitions
$\{E_g : g\in\mathcal{G}\}$ and $\{F_g : g\in\mathcal{G}\}$ of $E$ and $F$, respectively, such that $F_g = g (E_g)$, for all $g\in\mathcal{G}$,
modulo $\mu$-null sets.
\end{definition}
This definition immediately entails the next two results.
\begin{proposition}\label{prop99}
\begin{enumerate}
\item $\mathcal{G}$--congruence is an equivalence relation on the family of $m$-measurable sets.
\item If $E$ is a fundamental domains for $\mathcal{G}$, then $F$ is a fundamental domain for $\mathcal{G}$ iff $F \sim_{\mathcal{G}} E$.
\end{enumerate}
\end{proposition}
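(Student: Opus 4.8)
The plan is to prove the two parts of Proposition \ref{prop99} directly from the definition of $\mathcal{G}$--congruence, using only elementary set-theoretic manipulations together with the fact that each $g\in\mathcal{G}$ is an absolutely continuous Borel bijection (so that images and preimages of measurable sets are measurable, and $\mu$-null sets are preserved in both directions).

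For part (1), I would verify the three axioms of an equivalence relation. Reflexivity $E\sim_{\mathcal{G}}E$ follows by taking $E_g=\emptyset$ for $g\neq e$ and $E_e=E$, so that $F_g=g(E_g)$ reproduces $E$. Symmetry: if $\{E_g\}$, $\{F_g\}$ witness $E\sim_{\mathcal{G}}F$ with $F_g=g(E_g)$, then applying $g^{-1}$ gives $E_g=g^{-1}(F_g)$ modulo null sets (here absolute continuity of $g^{-1}$ is used), so the partitions $\{F_g\}$, $\{E_g\}$ witness $F\sim_{\mathcal{G}}E$ via the bijection $g^{-1}$; one reindexes $h:=g^{-1}$ to match the definition literally. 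Transitivity is the only step requiring real care: given $E\sim_{\mathcal{G}}F$ via $\{E_g\},\{F_g\}$ and $F\sim_{\mathcal{G}}H$ via $\{F'_h\},\{H_h\}$, one forms the common refinement $F_{g,h}:=F_g\cap F'_h$ of the two partitions of $F$, pulls it back to $E_{g,h}:=g^{-1}(F_{g,h})\subseteq E_g$ and pushes it forward to $H_{g,h}:=h(F_{g,h})\subseteq H_h$, and then defines, for $k\in\mathcal{G}$, $\widehat{E}_k:=\bigcup_{hg=k}E_{g,h}$ and $\widehat{H}_k:=\bigcup_{hg=k}H_{g,h}$. One checks these are measurable partitions of $E$ and $H$ respectively (modulo null sets) and that $\widehat{H}_k=k(\widehat{E}_k)$ because $h(g(E_{g,h}))=h(F_{g,h})=H_{g,h}$. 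Disjointness of the pieces with $hg=k$ for fixed $k$ must be argued: the sets $E_{g,h}$ are pairwise disjoint over all pairs $(g,h)$ since the $F_g$ are disjoint and $g$ is injective, so a fortiori over those with $hg=k$.

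For part (2), suppose $E$ is a fundamental domain for $\mathcal{G}$, i.e., $\{g(E):g\in\mathcal{G}\}$ tessellates $\X$ modulo $\mu$-null sets. If $F\sim_{\mathcal{G}}E$, take witnessing partitions $\{F_g\}$ of $F$ and $\{E_g\}$ of $E$ with $F_g=g(E_g)$. Then $\bigcup_{g}g(F)\supseteq\bigcup_g g(F_g)$; the key computation is to show $\{g(F):g\in\mathcal{G}\}$ covers $\X$ modulo null sets and is essentially disjoint. Covering: for $h\in\mathcal{G}$, $h(E)=\bigcup_g h(E_g)$, and since $E_g=g^{-1}(F_g)$ this is $\bigcup_g h g^{-1}(F_g)\subseteq\bigcup_{k}k(F)$ taking $k=hg^{-1}$; summing over $h$ and using that the $h(E)$ cover $\X$ gives that the $k(F)$ cover $\X$ up to a null set. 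Essential disjointness of $\{k(F)\}$ follows by a symmetric argument, or more cleanly: by part (1) $\sim_{\mathcal{G}}$ is symmetric and transitive, so it suffices to prove the single implication ``$F\sim_{\mathcal{G}}E$ and $E$ a fundamental domain $\Rightarrow$ $F$ a fundamental domain,'' and then the converse direction of the ``iff'' is obtained by interchanging the roles of $E$ and $F$ (using $E\sim_{\mathcal{G}}F$). For the forward implication one can also invoke the standard characterization that $C$ is a fundamental domain iff for $\mu$-a.e.\ $x\in\X$ the orbit $\mathcal{G}x$ meets $C$ in exactly one point (modulo null sets); $\mathcal{G}$--congruence transports this ``exactly one representative'' property from $E$ to $F$ because it matches up, for $\mu$-a.e.\ point of $F$, a unique point of $E$ lying on the same $\mathcal{G}$--orbit.

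The main obstacle is the bookkeeping in transitivity: one must be careful that the index set $\{(g,h):hg=k\}$ may be infinite, that countable unions of null sets remain null (which is fine since $\mathcal{G}$ is countable), and that the refinement $F_g\cap F'_h$ genuinely yields, after applying $g^{-1}$ and $h$, partitions of $E$ and $H$ and not merely subsets—this requires knowing that $\bigcup_{g,h}F_{g,h}=F$ modulo null sets, which holds because both $\{F_g\}$ and $\{F'_h\}$ partition $F$. Everything else is routine once absolute continuity is used to move null sets freely under $g$ and $g^{-1}$.
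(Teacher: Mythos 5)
Your part (1) is correct and complete: reflexivity via the trivial partition concentrated at the identity, symmetry via the reindexing $g\mapsto g^{-1}$, and transitivity via the common refinement $F_{g,h}=F_g\cap F'_h$ regrouped according to the product $hg=k$ all go through, and you correctly identify countability of $\mathcal{G}$ as what keeps the null-set bookkeeping harmless. The implication ``$F\sim_{\mathcal{G}}E$ and $E$ a fundamental domain $\Rightarrow$ $F$ a fundamental domain'' in part (2) is also sound: the covering computation $h(E)=\bigcup_g hg^{-1}(F_g)\subseteq\bigcup_k k(F)$ is the right one, and the essential disjointness you leave as ``a symmetric argument'' does reduce to the same bookkeeping, since $k_1(F)\cap k_2(F)\subseteq\bigcup_{g,g'}k_1g(E)\cap k_2g'(E)$ and the only surviving terms, those with $k_1g=k_2g'$, are killed by the essential disjointness of $E_g$ and $E_{k_2^{-1}k_1g}$. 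Note that the paper gives no argument of its own here --- it simply cites Dai--Larson --- so there is no in-text proof to compare against.

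The genuine gap is the other direction of the ``iff'' in part (2): if $E$ and $F$ are both fundamental domains you must \emph{produce} the congruence $F\sim_{\mathcal{G}}E$, and your proposal to obtain it ``by interchanging the roles of $E$ and $F$ (using $E\sim_{\mathcal{G}}F$)'' is circular, because $E\sim_{\mathcal{G}}F$ is precisely the conclusion to be proved, not an available hypothesis; interchanging roles in the implication you did prove only yields ``$E\sim_{\mathcal{G}}F$ and $F$ a fundamental domain $\Rightarrow$ $E$ a fundamental domain,'' which is not what is needed. The missing construction is short but must be stated: set $F_g:=F\cap g(E)$ and $E_g:=g^{-1}(F)\cap E$. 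Since $\{g(E)\st g\in\mathcal{G}\}$ tessellates $\X$ modulo $\mu$-null sets, $\{F_g\}$ is a measurable partition of $F$ modulo null sets; since $\{g^{-1}(F)\st g\in\mathcal{G}\}$ is again a tessellation (reindex $g\mapsto g^{-1}$), $\{E_g\}$ is a measurable partition of $E$ modulo null sets; and $g(E_g)=g(E)\cap F=F_g$. This gives $E\sim_{\mathcal{G}}F$, hence $F\sim_{\mathcal{G}}E$ by the symmetry established in part (1). With that paragraph inserted your proof is complete. (Your closing appeal to the ``exactly one orbit representative'' characterization should be handled with care when the action is not free --- for affine Weyl groups the reflecting hyperplanes are fixed pointwise --- but it is not needed once the explicit partitions above are in hand.)
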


\begin{proof}
See \cite{dai1}.
\end{proof}

\begin{definition}\label{dil-trans}
We call $(\mathcal{D},\mathcal{T})$ an \textbf{abstract dilation--translation pair} if
\begin{enumerate}
\item   for each bounded set $E$ and each open set $F$ there exist elements $\delta\in\mathcal{D}$ and $\tau\in\mathcal{T}$ such that $\tau(F)
        \subset\delta(E)$;
\item   there exists a fixed point $\theta\in \X$ for $\mathcal{D}$ with the property that if $N$ is any neighborhood of $\theta$ and $E$ any bounded
        set, there is an element $\delta\in\mathcal{D}$ such that $\delta(E)\subset N$.
\end{enumerate}
\end{definition}

The following result and its proof can be found in \cite{dai2}.

\begin{theorem}\label{t1}
Let $\X$, $\mathcal{B}$, $\mu$, $\mathcal{D}$, and $\mathcal{T}$ as above. Let $(\mathcal{D},\mathcal{T})$ be an abstract dilation--translation pair with $\theta$ being the fixed point of $\mathcal{D}$. Assume that $E$ and $F$ are bounded measurable sets in $\X$ such that $E$ contains a neighborhood of $\theta$, and $F$ has non-empty interior and is bounded away from $\theta$. Then there exists a measurable set $G\subset X$, contained in
$\displaystyle{\bigcup_{\delta\in\mathcal{D}}} \delta(F)$, which is both $\mathcal{D}$--congruent to $F$ and $\mathcal{T}$--congruent to $E$.
\end{theorem}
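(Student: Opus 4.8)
The plan is to build $G$ by a greedy exhaustion argument, transporting pieces of $F$ into $E$ (equivalently, into a fundamental-domain-like region near $\theta$) by dilations, and filling up $E$ in the process. First I would fix an enumeration $\mathcal{D} = \{\delta_k : k\in\N\}$ and $\mathcal{T} = \{\tau_k : k\in\N\}$, which is possible since both groups are countable. Because $(\mathcal{D},\mathcal{T})$ is an abstract dilation--translation pair, property (1) of Definition \ref{dil-trans} guarantees that the $\mathcal{D}$-orbit of $F$ together with the $\mathcal{T}$-translates interact richly enough to cover $E$; more precisely, since $E$ is bounded, there is a single $\delta\in\mathcal{D}$ and $\tau\in\mathcal{T}$ with $\tau(\text{interior of }F)\subset \delta(E)$, and by applying $\delta^{-1}$ we see a translate of (an open subset of) $F$ sitting inside $E$. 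The key structural fact I would extract is: the sets $\{\delta(F) : \delta\in\mathcal{D}\}$ can be used to cover $E$ up to a null set, because $E$ contains a neighborhood $N$ of $\theta$, and by property (2) there is $\delta\in\mathcal{D}$ with $\delta(F_0)\subset N$ for any bounded piece $F_0$ of $F$ — so after rescaling we can push shrunken copies of $F$ arbitrarily close to $\theta$ and hence into $E$.

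The construction itself I would carry out inductively. Set $E_0 := \varnothing$, $F_0 := \varnothing$. At stage $k$, having defined disjoint measurable pieces $E_1,\ldots,E_{k-1}\subset E$ and $F_1,\ldots,F_{k-1}\subset F$ with $E_j \sim_{\{g_j\}} F_j$ for suitable group elements, look at the "uncovered" part $E^{(k)} := E\setminus\bigcup_{j<k}E_j$. If $E^{(k)}$ is $\mu$-null we stop. Otherwise, use the dilation--translation pair property to find a dilation $\delta_{(k)}\in\mathcal{D}$ and translation $\tau_{(k)}\in\mathcal{T}$ so that some positive-measure chunk of $\tau_{(k)}(F)$, after dilating, lands inside $E^{(k)}$; pull this back to a piece $F^{(k)}\subset F$. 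To keep the $F$-side pieces disjoint as well, intersect with $F\setminus\bigcup_{j<k}F_j$ before proceeding — I need the combined map to remain a measurable bijection on the chosen piece, which holds since $\mathcal{D},\mathcal{T},\mathcal{G}$ consist of absolutely continuous Borel bijections (compositions and restrictions of such are again such). One organizes the bookkeeping so that at stage $k$ one is obliged to use up at least the "first available" bit of $E$ relative to the enumeration (a standard back-and-forth trick), forcing $\bigcup_k E_k = E$ and $\bigcup_k F_k$ to be a set $F'\subseteq F$ modulo null sets. Then $G := \bigcup_k g_k(F^{(k)})$ — the common image — is, by construction, $\mathcal{T}$-congruent to $E = \bigcup_k E_k$ and $\mathcal{D}$-congruent to $F' $; a final argument is needed to see $F' = F$ mod null sets, which I would get by running the exhaustion symmetrically (alternately prioritizing exhausting $F$ and exhausting $E$), and the containment $G\subset\bigcup_{\delta\in\mathcal{D}}\delta(F)$ is immediate since each $g_k$ factors through a dilation applied to a subset of $F$.

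The main obstacle I expect is the simultaneous exhaustion: making sure that the greedy process covers \emph{both} $E$ entirely (so the resulting $G$ is genuinely $\mathcal{T}$-congruent to $E$, not merely to a subset) \emph{and} all of $F$ (so $G$ is $\mathcal{D}$-congruent to $F$ itself). A naive one-sided greedy argument will leave a residual sliver on one side. The fix is the Cantor–Bernstein-style back-and-forth: at odd stages grab the lexicographically-first uncovered slice of $E$ and find room for it in the unused part of $F$ via the pair property; at even stages do the reverse, grabbing the first uncovered slice of $F$ and placing it in the unused part of $E$. The dilation--translation pair axioms are exactly what make "find room" possible in each direction — property (2) (contractions toward $\theta$) handles placing $F$-material into the neighborhood of $\theta$ inside $E$, and property (1) handles the other direction. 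The remaining work — checking $\mu$-measurability of all pieces, that the maps are bijective mod null sets, and that the countably many pieces genuinely partition (using non-atomicity of $\mu$ to subdivide when a chunk is "too big") — is routine given the absolute-continuity hypotheses, so I would state it briefly rather than belabor it.
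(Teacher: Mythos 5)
First, note that the paper itself does not prove Theorem \ref{t1}: it defers entirely to \cite{dai2}, so your proposal must be measured against the argument given there. Your overall framework --- build $G$ as a countable disjoint union of sets that are simultaneously $\tau_k$-images of pieces of $E$ and $\delta_k$-images of pieces of $F$ --- is the right shape, but the back-and-forth engine you propose does not run on the stated axioms, and this is a genuine gap rather than routine bookkeeping. At odd stages you want to take an uncovered slice of $E$ and ``find room for it in the unused part of $F$ via the pair property''; but property (1) of Definition \ref{dil-trans} only applies when the target set is \emph{open}, and after the first stage the unused part of $F$ is $F$ minus countably many used measurable pieces --- it need not be open and need not even have nonempty interior. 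Likewise, at even stages you want to place leftover $F$-material into the unused part of $E$ using property (2); but (2) only places shrunken copies near $\theta$, and the unused part of $E$ need not still contain a neighborhood of $\theta$ once earlier stages have eaten into it. Your preliminary ``key structural fact,'' that the dilates $\delta(F)$ cover $E$ up to a null set, is neither a consequence of the axioms nor needed. (A smaller slip: from $\tau(F)\subset\delta(E)$ you get $\delta^{-1}\tau(F)\subset E$, a \emph{dilate of a translate} of $F$ inside $E$, not a translate; in any case Definition \ref{dil-trans}(1) as printed has $E$ and $F$ in the opposite roles from the way the paper later verifies and uses it --- the usable form is $\tau(E)\subseteq\delta(F)$ for $E$ bounded and $F$ open.)

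The idea you are missing is that no back-and-forth is necessary, because property (1) is invoked only \emph{once}, at the very first step, applied to the whole bounded set $E$ and the open set $\overset{\circ}{F}$: this gives $\tau_1(E)\subseteq\delta_1(F)$, so $G_1:=\tau_1(E)$ is already $\mathcal{T}$-congruent to all of $E$ in a single piece, and $\mathcal{D}$-congruent to the subset $F_1:=\delta_1^{-1}(G_1)$ of $F$. All later corrections go in one direction only: the leftover $L_1:=F\setminus F_1$ is bounded, so by (2) some $\delta_2$ carries it into a neighborhood $N_2$ of $\theta$ chosen inside $E$ and disjoint from $\delta_1(F)$ --- possible because $F$, hence $\delta_1(F)$, is bounded away from the fixed point $\theta$. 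One sets $M_2:=\delta_2(L_1)$ and replaces $\tau_1(E)$ by $\tau_1(E\setminus M_2)\cup M_2$, which is still $\mathcal{T}$-congruent to $E$ (using $\tau_1$ on $E\setminus M_2$ and the identity of $\mathcal{T}$ on $M_2\subseteq E$). This creates a new leftover $L_2=\delta_1^{-1}\tau_1(M_2)\subseteq F$, again bounded away from $\theta$, which is pushed by (2) into a still smaller neighborhood $N_3$ disjoint from everything placed so far, and so on; the limit $G=\tau_1\bigl(E\setminus\bigcup_{n\ge2}M_n\bigr)\cup\bigcup_{n\ge2}M_n$ is then checked directly to be $\mathcal{T}$-congruent to $E$ and $\mathcal{D}$-congruent to $F$, with no measure-convergence or ``lexicographically first slice'' issues to resolve. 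As written, the steps ``find room in the unused part of $F$'' and ``place into the unused part of $E$'' are exactly the points where your proof would fail.
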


The following is a consequence of Proposition \ref{prop99} and Theorem \ref{t1} and is the key to obtaining wavelet sets.
\begin{corollary}
With the terminology of Theorem \ref{t1}, if in addition $F$ is a fundamental domain for $\mathcal{D}$ and $E$ is a fundamental domain for
$\mathcal{T}$, then there exists a set $G$ which is a common fundamental domain for both $\mathcal{D}$ and $\mathcal{T}$.
\end{corollary}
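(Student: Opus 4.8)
The plan is to derive this corollary directly from Theorem \ref{t1} together with part (2) of Proposition \ref{prop99}, with essentially no new work. First I would invoke Theorem \ref{t1}: since all the hypotheses of Theorem \ref{t1} are assumed (the abstract dilation--translation pair $(\mathcal{D},\mathcal{T})$ with fixed point $\theta$, and the bounded measurable sets $E,F$ with $E\supset$ a neighborhood of $\theta$ and $F$ having nonempty interior and bounded away from $\theta$), the theorem produces a measurable set $G\subset\bigcup_{\delta\in\mathcal{D}}\delta(F)$ that is simultaneously $\mathcal{D}$--congruent to $F$ and $\mathcal{T}$--congruent to $E$.

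Next I would feed in the extra hypotheses of the corollary, namely that $F$ is a fundamental domain for $\mathcal{D}$ and $E$ is a fundamental domain for $\mathcal{T}$. By part (2) of Proposition \ref{prop99}, a set is a fundamental domain for a group $\mathcal{G}$ if and only if it is $\mathcal{G}$--congruent to a known fundamental domain for $\mathcal{G}$. Applying this twice: since $G\sim_{\mathcal{D}}F$ and $F$ is a fundamental domain for $\mathcal{D}$, the set $G$ is a fundamental domain for $\mathcal{D}$; and since $G\sim_{\mathcal{T}}E$ and $E$ is a fundamental domain for $\mathcal{T}$, the set $G$ is a fundamental domain for $\mathcal{T}$. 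Hence $G$ is a common fundamental domain for both $\mathcal{D}$ and $\mathcal{T}$, which is exactly the assertion.

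There is essentially no obstacle here — the corollary is a formal consequence of the two cited results, and the only thing to be careful about is that Proposition \ref{prop99}(2) is stated for a single group $\mathcal{G}$, so one simply applies it once with $\mathcal{G}=\mathcal{D}$ and once with $\mathcal{G}=\mathcal{T}$, using the fact (Proposition \ref{prop99}(1)) that $\mathcal{G}$--congruence is an equivalence relation so the congruences may be read in either direction. If one wanted to be fully self-contained one could also remark that $\mathcal{D}$-- and $\mathcal{T}$--congruence are preserved under the passage to $G$ modulo $\mu$--null sets, which is harmless since fundamental domains are only defined up to null sets anyway. I would therefore present the proof as the two-line deduction above, citing Theorem \ref{t1} and Proposition \ref{prop99}.
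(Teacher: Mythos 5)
Your proof is correct and follows exactly the route the paper intends: it states the corollary as "a consequence of Proposition \ref{prop99} and Theorem \ref{t1}," which is precisely your two-step deduction (Theorem \ref{t1} supplies $G$ with the two congruences, and Proposition \ref{prop99}(2) upgrades each congruence to the fundamental-domain property). Nothing is missing.
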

In order to apply the above result to wavelet sets in $\mathbb{R}^n$, we require the following two definitions.

\begin{definition}
Let $A\in M_n(\R)$ be an $(n\times n)$--matrix with real coefficients. By an \textbf{orthonormal $(D_A, T)$--wavelet} we mean a function $\psi \in L^2(\mathbb{R}^n)$ such that
\begin{equation}\label{psin}
\{|\det(A)|^{\frac{n}{2}} \psi(A^n t - \ell) \st n\in \mathbb{Z},\, \ell\in\mathbb{Z}^n,\, i = 1,\ldots, n\}
\end{equation}
where $\ell = (\ell_1, \ell_2, ..., \ell_n)^\top$, is an orthonormal basis for $L^2(\mathbb{R}^n ; m)$.  (Here $m$ is product Lebesgue measure, and the superscript ${}^\top$ means transpose.)
\end{definition}

If $A \in M_n(\mathbb{R})$ is invertible (so in particular if $A$ is expansive), then the operator defined by
\begin{equation*}
(D_Af)(t) = |\det A|^{\frac12} f(At)
\end{equation*}
for $f \in L^2(\mathbb{R}^n)$, $t \in \mathbb{R}^n$, is \emph{unitary}. For $1 \leq i \leq n$, let $T_i$ be the unitary
operator determined by translation by $1$ in the $i^{th}$ coordinate direction.  Then the set \eqref{psin} above can be written as
\begin{equation*}
\{D^k_A T^{\ell} \psi \st k,\ell \in \mathbb{Z}^n\},
\end{equation*}
with $T^\ell := T^{\ell_1}_1 \cdot\cdot\cdot T^{\ell_n}_n$.

\begin{definition}
A \textbf{$(D_A, T)$--wavelet set} is a measurable subset $E$ of $\mathbb{R}^n$ for which the inverse Fourier transform of $(m(E))^{-n/2}\,\chi_E$ is an orthonormal $(D_A, T)$--wavelet.
\end{definition}

Two measurable subsets $H$ and $K$ of $\mathbb{R}^n$ are called \textbf{$A$--dilation congruent}, in symbols $H\sim_{\delta_A} K$, if there exist measurable partitions $\{H_\ell \st \ell\in\mathbb{Z}\}$ of $H$ and $\{K_\ell \st \ell\in\mathbb{Z}\}$ of $K$ such that $K_\ell = A^\ell H_\ell$ modulo Lebesgue null sets. Moreover, two measurable sets $E$ and $F$ of $\mathbb{R}^n$ are called \textbf{$2\pi$--translation congruent}, written $E\sim_{\tau_{2\pi}} F$, if there exist measurable partitions $\{E_\ell \st \ell\in\mathbb{Z}^n\}$ of $E$ and $\{F_\ell \st \ell\in\mathbb{Z}^n\}$ of $F$ such that $F_\ell = E_\ell + 2\pi\ell$ modulo Lebesgue null sets.

We remark that this generalizes to $\R^n$ the previous definition of $2\pi$--translation congruence for subsets of $\R$.  Observe that $A$--dilation by an expansive matrix together with $2\pi$--translation congruence is a special case of an abstract dilation-translation pair as introduced in Definition \ref{dil-trans}.  Let $\mathcal{D} := \langle A^k \st k \in \Z\rangle$ be the dilation group generated by powers of $A$, and let $\mathcal{T} := \langle T_{2\pi }^\ell \st \ell \in \Z^n \rangle$ be the group of translations generated by the translations $T_{2\pi }$ along the coordinate directions.  Let $E$ be any bounded set and let $F$ be any open set that is bounded away from 0. Let $r > 0$ be such that  $E \subseteq B_r(0)$.  Since $A$ is expansive there is an $\ell \in \mathbb{N}$ such that $A^\ell F$ contains a ball $B$ of radius large enough so that $B$ contains some lattice point  $2k\pi$ together with the ball $B_R(2k\pi)$ of radius $R > 0$ centered at the lattice point. Then $E + 2k\pi \subseteq A^\ell F$.  That is, the $2k\pi$--translate of $E$ is contained in the $A^\ell$--dilate of $F$,  as required in (1) of Definition \ref{dil-trans}. For (2) of Definition \ref{dil-trans}, let $\theta = 0$, and let $N$ be a neighborhood of 0, and let $E$ be any bounded set.  As above, choose $r  > 0$ with  $E \subseteq B_r(0)$. Let $\ell \in \mathbb{N}$ be such that $A^\ell N$ contains $B_r(0)$.  Then $A^{-\ell}$ is the required dilation such that $A^{-\ell}E \subseteq N$.
\par
Note that if $W$ is a measurable subset of $\mathbb{R}^n$ that is $2\pi$--translation congruent to the $n$-cube $E := \underset{i=1}{\overset{N}{\times}}  [-\pi, \pi)$, it follows from the exponential form of $\widehat{T}_j$ that $\left\{\widehat{T}_1^{\ell_1}\widehat{T}_2^{\ell_2}\cdots
\widehat{T}_n^{\ell_n}\,(m(W))^{-1/2}\,\chi_W\st \ell = (\ell_1,\ell_2,\ldots,\ell_n)\in\mathbb{Z}^n\right\}$ is an orthonormal basis for $L^2 (W)$.
Furthermore, if $A$ is an expansive matrix and $B$ the unit ball of $\mathbb{R}^n$ then with $F_A := A(B)
\setminus B$ the collection $\{A^k F_A : k\in\mathbb{Z}\}$ is a partition of $\mathbb{R}^n\setminus\{0\}$. Consequently, $L^2 (F_A)$, considered as a subspace of $L^2 (\mathbb{R}^n)$, is a complete wandering subspace for $D_A$. Hence, $L^2 (\mathbb{R}^n)$ is a direct sum decomposition of the subspaces $\{D_A^k L^2 (F_A) \st k\in\mathbb{Z}\}$. Clearly, any other measurable set $F^\prime\sim_{\delta_A} F_A$ has this same property.

The following theorem gives the existence of wavelet sets in $\mathbb{R}^n$. For the proof, see \cite{dai3}.
\begin{theorem}
Let $n\in\mathbb{N}$ and let $A$ be an expansive $n\times n$ matrix. Then there exist $(D_A, T)$--wavelet sets.
\end{theorem}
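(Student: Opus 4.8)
The plan is to deduce the theorem directly from the Corollary following Theorem~\ref{t1}, which already packages the hard analytic work. First I would recall that the previous discussion has verified that the pair $(\mathcal{D},\mathcal{T})$ with $\mathcal{D} := \langle A^k \st k\in\Z\rangle$ and $\mathcal{T} := \langle T_{2\pi}^\ell \st \ell\in\Z^n\rangle$ is an abstract dilation--translation pair in the sense of Definition~\ref{dil-trans}, with fixed point $\theta = 0$; this uses only expansiveness of $A$ and was carried out in detail just above the statement. The ambient data $\X = \R^n$, $\mathcal{B} = $ Borel sets, and $\mu = $ Lebesgue measure plainly satisfy the standing hypotheses ($\sigma$-finite, non-atomic, positive on open sets, finite on bounded sets).

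Next I would exhibit the two fundamental domains required by the Corollary. For $\mathcal{T}$, take $E := \underset{i=1}{\overset{n}{\times}}[-\pi,\pi)$, the half-open $n$-cube; it is a bounded measurable set containing a neighborhood of $0$, and $\{E + 2\pi\ell \st \ell\in\Z^n\}$ tiles $\R^n$, so $E$ is a fundamental domain for $\mathcal{T}$. For $\mathcal{D}$, take $F := F_A = A(B)\setminus B$ where $B$ is the open unit ball; as noted in the excerpt, $\{A^k F_A \st k\in\Z\}$ partitions $\R^n\setminus\{0\}$ modulo a null set, so $F_A$ is a fundamental domain for $\mathcal{D}$, and it has non-empty interior and is bounded away from $\theta = 0$. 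Thus all hypotheses of Theorem~\ref{t1} and its Corollary are met, and we obtain a measurable set $G\subset\bigcup_{\delta\in\mathcal{D}}\delta(F_A)$ that is simultaneously a fundamental domain for $\mathcal{D}$ and for $\mathcal{T}$.

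It then remains to translate ``common fundamental domain'' into ``wavelet set.'' Since $G\sim_{\tau_{2\pi}} E$, the exponential form of $\widehat{T}_j = M_{e^{-is_j}}$ shows that $\{\widehat{T}_1^{\ell_1}\cdots\widehat{T}_n^{\ell_n}\,(m(G))^{-1/2}\chi_G \st \ell\in\Z^n\}$ is an orthonormal basis for $L^2(G)$ — this is exactly the remark made in the excerpt, using that $G + 2\pi\ell$ tiles $\R^n$ and that the normalized exponentials form an orthonormal basis of $L^2$ of a fundamental domain of the $2\pi\Z^n$-translation action. Since $G\sim_{\delta_A} F_A$, the set $L^2(G)$ is a complete wandering subspace for $D_A$, so $L^2(\R^n) = \bigoplus_{k\in\Z} D_A^k\,L^2(G)$. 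Applying $D_A^k$ to the orthonormal basis of $L^2(G)$ and taking the union over $k\in\Z$ yields an orthonormal basis $\{D_A^k\widehat{T}^\ell\,(m(G))^{-1/2}\chi_G \st k\in\Z,\ \ell\in\Z^n\}$ of $L^2(\R^n)$. Unwinding the Fourier conjugations $\widehat{D_A} = D_{A^{-1}}$ and $\widehat{T}_j = M_{e^{-is_j}}$, this says precisely that the inverse Fourier transform of $(m(G))^{-n/2}\chi_G$ is an orthonormal $(D_A,T)$-wavelet, i.e.\ $G$ is a $(D_A,T)$-wavelet set, proving existence.

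I expect the genuinely hard step to be Theorem~\ref{t1} itself — the inductive ``tailoring'' construction that produces $G$ by trading pieces between dilates of $F_A$ and translates of $E$ — but that is cited to \cite{dai2,dai3} and may be invoked as a black box here. The only points demanding care in the present write-up are the bookkeeping of the two congruences and the passage through the Fourier transform, together with confirming the two chosen domains have the required topological features (neighborhood of $0$; bounded away from $0$; non-empty interior), all of which are immediate for the cube and the annulus $A(B)\setminus B$.
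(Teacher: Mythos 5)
Your proposal is correct and takes essentially the same route the paper intends: the paper itself defers the proof to \cite{dai3}, but the material it assembles immediately before the statement --- the verification that $(\mathcal{D},\mathcal{T})$ is an abstract dilation--translation pair, the cube $E=\underset{i=1}{\overset{n}{\times}}[-\pi,\pi)$ as a fundamental domain for $\mathcal{T}$, the set $F_A=A(B)\setminus B$ as a fundamental domain for $\mathcal{D}$, and the Corollary to Theorem~\ref{t1} producing a common fundamental domain $G$ --- is exactly the argument you write out, followed by the same Fourier-transform translation of ``common fundamental domain'' into ``wavelet set.'' The only caveats are ones inherited from the paper and its sources rather than introduced by you (e.g.\ that $B\subseteq A(B)$ for an expansive $A$ requires taking the ball of a norm adapted to $A$, and the $(m(E))^{-n/2}$ versus $(m(E))^{-1/2}$ normalization), so nothing further is needed.
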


\begin{example}
An example of a (fractal) wavelet set in $\R^2$ is shown below. The exact construction parameters are given in \cite{dai1}.
\begin{figure}[h!]
\begin{center}
\includegraphics[width = 4cm, height = 4cm]{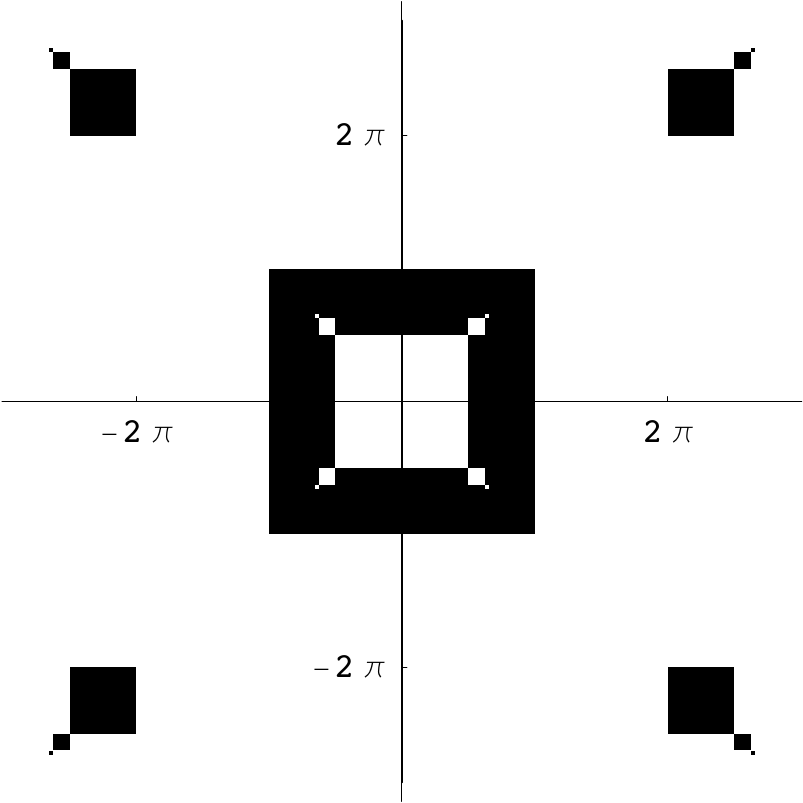}
\end{center}
\end{figure}
\end{example}
For more examples and constructions of wavelet sets, we encourage the reader to consult, for instance, \cite{bmm,bl,dai3,merrill1,merrill2,sw}.
\vskip 6pt
Finally, we put the concepts of fractal hypersurface, foldable figure, affine Weyl group and wavelet together and introduce a new type of wavelet set is called a \textbf{dilation-reflection wavelet set}. The idea is to adapt Definition \ref{dil-trans}, replacing the group of translations $\mathcal{T}$ in the traditional wavelet theory by an affine Weyl group $\widetilde{\cW}$ whose fundamental domain is a foldable figure $C$, and to use the orthonormal basis of affine fractal hypersurfaces constructed in Section \ref{sec4}.

In Definition \ref{dil-trans}, we take $X := \mathbb{R}^n$ endowed with the Euclidean affine structure and distance, and for the abstract translation group $\mathcal{T}$ we take the affine Weyl group $\widetilde{{\mathcal{W}}}$ generated by a group of affine reflections arising from a locally finite collection of affine hyperplanes of $X$. Let $C$ denote a fundamental domain for $\widetilde{{\mathcal{W}}}$ which is also a foldable figure. Recall that $C$ is a simplex, i.e., a convex connected polytope, which tessellates $\mathbb{R}^n$ by reflections about its bounding hyperplanes. Let $\theta$ be any fixed interior point of $C$. Let $A$ be any real expansive matrix in $M_n(\mathbb{R})$ acting as a linear transformation on $\mathbb{R}^n$.  In the case where $\theta$ is the orgin $0$ in $\mathbb{R}^n$ we simply take $D_A$ to be the usual dilation by $A$ and the abstract dilation group to be $\mathcal{D} = \{D^k_A \st k \in \mathbb{Z}\}$. For a general $\theta$, define $D_{A,\theta}$ to be the affine mapping $D_\theta(x) := A(x - \theta) + \theta, x \in\mathbb{R}^n$ and $\mathcal{D}_{\theta} = \{D^k_{A,\theta} \st k \in \mathbb{Z}\}$.

\begin{proposition}\label{th10}
 $(\mathcal{D}_\theta,\widetilde{{\mathcal{W}}})$ is an
abstract dilation-translation pair in the sense of Definition \ref{dil-trans}.
\end{proposition}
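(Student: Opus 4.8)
The plan is to verify directly the two defining conditions of an abstract dilation--translation pair (Definition~\ref{dil-trans}), following the template of the verification carried out above for the classical pair $(\mathcal{D},\mathcal{T})$ on $\mathbb{R}^n$, with the lattice translation group replaced by the affine Weyl group $\widetilde{{\mathcal{W}}}$. The only genuinely new ingredient is that the bounded ``covering radius'' of the lattice $2\pi\mathbb{Z}^n$ is now supplied by the fact, recalled in Section~\ref{sec5}, that the foldable figure $C$ is a \emph{compact} fundamental domain which tessellates $\mathbb{R}^n$ under the action of $\widetilde{{\mathcal{W}}}$.

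First I would dispose of condition (2). A one-line induction gives $D_{A,\theta}^{k}(x)=A^{k}(x-\theta)+\theta$ for all $k\in\mathbb{Z}$, so $\theta$ is a common fixed point of $\mathcal{D}_\theta$. Since $A$ is expansive, every eigenvalue of $A^{-1}$ has modulus $<1$, hence the spectral radius of $A^{-1}$ is $<1$ and $\|A^{-\ell}\|\to 0$ as $\ell\to\infty$ by Gelfand's formula. Given a neighborhood $N$ of $\theta$ choose $\varepsilon>0$ with $B_\varepsilon(\theta)\subseteq N$, and given a bounded set $E$ choose $r>0$ with $E\subseteq B_r(\theta)$; then for $\ell$ large enough that $\|A^{-\ell}\|\,r<\varepsilon$,
\[
D_{A,\theta}^{-\ell}(E)=A^{-\ell}(E-\theta)+\theta\subseteq B_{\|A^{-\ell}\|\,r}(\theta)\subseteq B_\varepsilon(\theta)\subseteq N,
\]
so $\delta:=D_{A,\theta}^{-\ell}\in\mathcal{D}_\theta$ does the job.

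Next, condition (1). Let $E$ be bounded, $E\subseteq B_r(\theta)$, and let $F$ be open and nonempty, so $F$ contains some ball $B_\rho(y_0)$. The same estimate shows $A^{\ell}\bigl(B_\rho(0)\bigr)\supseteq B_{\rho/\|A^{-\ell}\|}(0)$, hence $D_{A,\theta}^{\ell}(F)$ contains the ball $B_{R_\ell}(q_\ell)$ with centre $q_\ell:=A^{\ell}(y_0-\theta)+\theta$ and radius $R_\ell:=\rho/\|A^{-\ell}\|\to\infty$. Now set $\Delta:=\operatorname{diam}C<\infty$; since $\{w(C):w\in\widetilde{{\mathcal{W}}}\}$ covers $\mathbb{R}^n$ and $\theta$ is an \emph{interior} point of $C$, for every $p\in\mathbb{R}^n$ there is $w\in\widetilde{{\mathcal{W}}}$ with $p\in w(C)$, whence $|p-w(\theta)|\le\Delta$; i.e. the orbit $\widetilde{{\mathcal{W}}}\cdot\theta$ is $\Delta$--dense in $\mathbb{R}^n$. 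Applying this to $p=q_\ell$ produces $w\in\widetilde{{\mathcal{W}}}$ with $|w(\theta)-q_\ell|\le\Delta$, and since $w$ is an isometry, $w(E)\subseteq B_r(w(\theta))$. Thus as soon as $\ell$ is large enough that $R_\ell>\Delta+r$,
\[
w(E)\subseteq B_r(w(\theta))\subseteq B_{R_\ell}(q_\ell)\subseteq D_{A,\theta}^{\ell}(F),
\]
which is condition (1) with $\tau:=w\in\widetilde{{\mathcal{W}}}$ and $\delta:=D_{A,\theta}^{\ell}\in\mathcal{D}_\theta$, in the same bounded-into-dilate form used in the verification of the $\mathbb{R}^n$ case above. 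This also recovers, via Theorem~\ref{t1} and Proposition~\ref{prop99}, the intended corollary that a common fundamental domain for $\mathcal{D}_\theta$ and $\widetilde{{\mathcal{W}}}$ exists.

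The two ball inclusions are routine; the step that carries the content is the $\Delta$--density of $\widetilde{{\mathcal{W}}}\cdot\theta$, which rests squarely on the compactness of the foldable figure $C$ together with $\widetilde{{\mathcal{W}}}$ tessellating $\mathbb{R}^n$ — the analogue, for a general affine Weyl group, of the finite covering radius of $2\pi\mathbb{Z}^n$. A minor point worth flagging is that the element $\tau=w$ produced this way is a general element of $\widetilde{{\mathcal{W}}}$, not necessarily a pure translation from the coroot lattice; phrasing the final inclusion through $w(E)\subseteq B_r(w(\theta))$ rather than translating $E$ explicitly is precisely what lets us avoid tracking which coset of the translation subgroup $w$ lies in.
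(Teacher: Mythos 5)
Your proof is correct and follows essentially the same route as the paper: condition (2) from expansiveness of $A$ and the fixed point $\theta$, and condition (1) by dilating $F$ until it contains a large ball and then using the $\widetilde{\mathcal{W}}$--tessellation by the compact foldable figure $C$, together with the fact that elements of $\widetilde{\mathcal{W}}$ are isometries, to move $E$ into that ball. The only cosmetic difference is that you quantify the tiling step via the covering radius $\Delta=\operatorname{diam} C$ of the orbit $\widetilde{\mathcal{W}}\cdot\theta$, whereas the paper packages the same idea by choosing a ball $B_{3r}(p)$ inside the dilate and a tile $w(C)$ meeting $B_r(p)$ in positive measure.
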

\begin{proof}
The proof is given in \cite{lm} but for completeness, we repeat it here. By the definition of $D$, $\theta$ is a fixed point for $\mathcal{D}_\theta$. By a change of coordinates we may assume without loss of generality that $\theta = 0$ and consequently that $D$ is multiplication by $A$ on $\mathbb{R}^n$.
\par
Let $B_r (0)$ be an open ball centered at $0$ with radius $r > 0$ containing both $E$ and $C$. Since $F$ is open and $A$ is expansive, there exists a $k \in\N$ sufficiently large so that $D^k F$ contains an open ball $B_{3r}(p)$ of radius $3r$ and with some center $p$.  Since $C$ tiles $\R^n$ under the action of $\wW$, there exists a word $w\in \wW$ such that $w(C) \cap B_r(p)$ has positive measure. (Note here that $B_r(p)$ is the ball with the same center $p$ but with smaller radius $r$.) Then $w(B_r(0)) \cap B_r(p) \neq \emptyset$. Since reflections (and hence words in $\wW$) preserve diameters of sets in $\R^n$, it follows that $w(B_r(0))$ is contained in $B_{3r}(p)$.  Hence $w(E)$ is contained in $D^k(F)$, as required.
\par
This establishes part (1) of Definition \ref{dil-trans}. Part (2) follows from the fact that $\theta = 0$ and $D$ is multiplication by an expansive matrix in $M_n(\mathbb{R})$.
\end{proof}

Now we extend the definition of $(D_A,T)$--wavelet set in $\R^n$ to this new setting.

\begin{definition} \label{def7.2}
Given an affine Weyl group $\widetilde{{\mathcal{W}}}$ acting on $\mathbb{R}^n$ with fundamental domain a foldable figure $C$, given a designated interior point $\theta$ of $C$, and given an expansive matrix $A$ on $\mathbb{R}^n$, a \textbf{$(D_{A,\theta}, \wW)$--wavelet set} is a measurable subset $E$ of $\mathbb{R}^n$ satisfying the properties:
\begin{enumerate}
\item $E$ is congruent to $C$ (in the sense of Definition 2.4) under the
action of $\widetilde{{\mathcal{W}}}$, and
\item $W$ generates a measurable partition of $\mathbb{R}^n$ under the action of
the affine mapping $D(x) := A(x - \theta) + \theta$.
\end{enumerate}
In the case  where $\theta = 0$, we abbreviate $(D_{A,\theta}, \wW)$ to $(D_A, \wW)$.
\end{definition}

The next result establishes the existence of $(D_{A,\theta}, \wW)$--wavelet sets. The proof is a direct application of Theorem \ref{t1} and can be found in \cite{lm}.
\begin{theorem}\label{100}
There exist $(D_{A,\theta}, \wW)$--wavelet sets for every choice of $\widetilde{W}$, A,  and $\theta$.
\end{theorem}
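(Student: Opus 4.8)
The plan is to deduce Theorem \ref{100} from Theorem \ref{t1} and Corollary, using Proposition \ref{th10} as the crucial input. First I would verify that the standing hypotheses of Theorem \ref{t1} are met in the present setting: take $\X := \R^n$ with Lebesgue measure $m$, which is $\sigma$-finite, non-atomic, assigns positive measure to open sets and finite measure to bounded sets; take $\mcD := \mcD_\theta = \{D_{A,\theta}^k \st k\in\Z\}$ and $\mcT := \wW$. Both are countable groups of homeomorphisms of $\R^n$ (the affine Weyl group is countable since it is a subgroup of the affine group generated by countably many reflections, and it is locally finite), each maps bounded sets to bounded sets (dilations and affine isometries do), and each is absolutely continuous (affine maps send null sets to null sets). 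By Proposition \ref{th10}, $(\mcD_\theta,\wW)$ is an abstract dilation--translation pair with fixed point $\theta$.

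Next I would choose the sets $E$ and $F$ to which Theorem \ref{t1} is applied. For $E$ I take the foldable figure $C$ itself (or a slightly enlarged open neighborhood of $\theta$ containing $C$, as needed): $C$ is a compact simplex with $\theta$ in its interior, so it is a bounded measurable set containing a neighborhood of $\theta$, and it is a fundamental domain for $\wW$ by construction. For $F$ I take $F_A := A'(B)\setminus B$ where $B$ is the open unit ball and $A'$ is a suitable power of $A$ (or more precisely the ``annular'' fundamental domain for the dilation group $\mcD_\theta$ obtained after the change of coordinates that moves $\theta$ to the origin): this $F$ has non-empty interior and is bounded away from $\theta$, and $\{D_{A,\theta}^k(F) \st k\in\Z\}$ tiles $\R^n\setminus\{\theta\}$, so $F$ is a fundamental domain for $\mcD_\theta$. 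Theorem \ref{t1} then produces a measurable set $G\subset\bigcup_{k}D_{A,\theta}^k(F)$ that is simultaneously $\mcD_\theta$--congruent to $F$ and $\wW$--congruent to $E = C$.

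Finally I would invoke Proposition \ref{prop99}(2): since $F$ is a fundamental domain for $\mcD_\theta$ and $G\sim_{\mcD_\theta}F$, the set $G$ is a fundamental domain for $\mcD_\theta$; since $C$ is a fundamental domain for $\wW$ and $G\sim_{\wW}C$, the set $G$ is a fundamental domain for $\wW$. Thus $G$ satisfies conditions (1) and (2) of Definition \ref{def7.2} — congruence to $C$ under $\wW$, and generating a measurable partition of $\R^n$ under the action of $D_{A,\theta}$ — so $G$ is a $(D_{A,\theta},\wW)$--wavelet set. Since $A$, $\wW$ and $\theta$ were arbitrary, existence holds for every choice.

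The main obstacle is not any single deep step but the careful bookkeeping needed to confirm that the affine Weyl group $\wW$ genuinely fits the axioms required of the group $\mcT$ in Theorem \ref{t1} and that $C$ and $F_A$ really are fundamental domains in the measure-theoretic sense (the bounding hyperplanes of $C$ and the sphere boundaries of $F_A$ are null, so the set-theoretic tessellations are also measurable partitions). One should also take care that the change of coordinates sending $\theta$ to $0$ — used to reduce $D_{A,\theta}$ to honest multiplication by $A$ — does not disturb any of these properties, which it does not since it is an invertible affine map. Everything else is a direct citation of the already-established Theorem \ref{t1}, Corollary, and Proposition \ref{prop99}.
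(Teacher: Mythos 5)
Your proposal is correct and follows exactly the route the paper indicates: a direct application of Theorem \ref{t1} (via Proposition \ref{th10}) with $E=C$ and $F$ an annular fundamental domain for the dilation group, followed by Proposition \ref{prop99} and the Corollary to conclude that the resulting set $G$ is a common fundamental domain and hence a $(D_{A,\theta},\wW)$--wavelet set. The paper itself only cites \cite{lm} for the details, so your write-up is simply a more explicit version of the same argument.
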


In the case of a dilation--translation wavelet set $W$, the two systems of unitaries are $\mathcal{D} := \{D_A^k \st k\in\mathbb{Z}\}$, where $A\in M_n (\mathbb{R})$ is an expansive matrix, and $\mathcal{T} := \{T^\ell \st \ell\in\mathbb{Z}^n\}$. An orthonormal wavelet basis of $L^2 (\mathbb{R}^n)$ is then obtained by setting $\widehat{\psi}_W := (m(W))^{-1/2} \chi_W$ and taking
\be\label{classical}
\left\{\widehat{D}_A^k \widehat{T}^\ell \widehat{\psi}_W \st k\in\mathbb{Z},\, \ell\in\mathbb{Z}^n\right\}.
\ee
\par
For the systems of unitaries  $\mathcal{D} := \{D_A^k \st k\in\mathbb{Z}\}$ and $\widetilde{\mathcal{W}}$, the affine Weyl group associated with a foldable figure $C$, one obtains as an orthonormal basis for $L^2 (\mathbb{R}^n)$
\be\label{DB}
\left\{D_{\varkappa I}^k \fB_r \st k\in\mathbb{Z}, \;r\in\wW\right\},
\ee
where  $\fB_r = \left\{\fb\circ r\st \fb\in \fB\right\}$ is an affine fractal hypersurface basis as constructed in the previous section.

It was shown in \cite{ghm1,ghm2} that one can even construct multiresolution analyses based on affinely generated fractal functions as constructed in Section 3 and 4. The resulting sets of scaling vectors and multiwavelets are piecewise affine fractal functions and they generate orthonormal bases for the underlying approximation, respectively, wavelet spaces. If one uses these orthonormal multiwavelet bases in \eqref{DB} to obtain orthonormal bases for $L^2(\R^n)$ then the analogy to the classical case as exemplified by \eqref{classical} is complete. (See \cite{lm}.)
\end{document}